\newtheorem{thm}{Theorem} [subsection]
\newtheorem{lem}[thm]{Lemma}
\newtheorem{cor}[thm]{Corollary}
\newtheorem{prop}[thm]{Proposition}
\theoremstyle{definition}
\newtheorem{definition}[thm]{Definition}
\newtheorem{example}[thm]{Example}
\newtheorem{rem}[thm]{Remark}
\numberwithin{equation}{subsection}
\newcommand{\nc}{\newcommand}
\nc{\browntext}[1]{\textcolor{brown}{#1}}
\nc{\greentext}[1]{\textcolor{green}{#1}}
\nc{\redtext}[1]{\textcolor{red}{#1}}
\nc{\bluetext}[1]{\textcolor{blue}{#1}}
\nc{\brown}[1]{\browntext{ #1}}
\nc{\green}[1]{\greentext{ #1}}
\nc{\red}[1]{\redtext{ #1}}
\nc{\blue}[1]{\bluetext{ #1}}
\newcommand{\qbi}{\left[\begin{array}{c}
4\\2
\end{array}\right]}
\newcommand{\Zq}{\mathbb{Z}[q, q^{-1}]}
\title[Lyndon bases of split $\imath$quantum groups]
{Lyndon bases of split $\imath$quantum groups}
\author[Run-Qiang Jian]{Run-Qiang Jian}
\address{Department of mathematics, Dongguan University of Technology, Dongguan 523808, China}
\email{jianrq@dgut.edu.cn (Jian)}
\author[Li Luo]{Li Luo}
\author[Xianfa Wu]{Xianfa Wu}
\address{School of Mathematical Sciences, Key Laboratory of MEA (Ministry of Education) \& Shanghai Key Laboratory of PMMP,  East China Normal University, Shanghai 200241, China}
\email{lluo@math.ecnu.edu.cn (Luo), 52275500051@stu.ecnu.edu.cn (Wu)}
\subjclass[2020]{16T20,17B37,20G42}
\keywords{Split $\imath$quantum group, good word, Lyndon word, Lyndon basis}
\begin{document}

\begin{abstract}
We introduce and study Lyndon bases of the split $\imath$quantum groups $\mathbf{U}^\imath(\mathfrak{g})$. Especially, we provide the relation between Lyndon bases and Lusztig's PBW-type bases of $\mathbf{U}^\imath(\mathfrak{g})$, as well as a construction of canonical bases of $\mathbf{U}^\imath(\mathfrak{g})$ under an integral condition.
\end{abstract}

\maketitle
\setcounter{tocdepth}{1}

\section{Introduction}
An $\imath$quantum group is a coideal subalgebra
$\mathbf{U}^\imath(\mathfrak{g})$ of a Drinfeld-Jimbo quantum
group $\mathbf{U}(\mathfrak{g})$ such that
$(\mathbf{U}(\mathfrak{g}), \mathbf{U}^\imath(\mathfrak{g}))$
forms a quantum symmetric pair in the sense of Letzter
\cite{Let99,Let02}. The classification of quantum symmetric pairs
(and hence $\imath$quantum groups) is formulated by Satake
diagrams, where a Satake diagram $(\mathbb{I}=\mathbb{I}_\circ\cup
\mathbb{I}_\bullet,\tau)$ consists of a bicolored partition of the
Dynkin diagram $\mathbb{I}=\mathbb{I}_\circ\cup
\mathbb{I}_\bullet$ and a (possibly trivial) diagram involution
$\tau$ subject to some compatibility conditions. The associated
$\imath$quantum groups are called quasi-split if
$\mathbb{I}_\bullet=\emptyset$; and are called split if it further
satisfies $\tau=\mathrm{id}$. Thanks to their pioneering work
\cite{BW18a} indicating the importance of $\imath$quantum groups in Kazhdan-Luszitg theory, Bao and Wang suggested to generalize
fundamental constructions for quantum groups to $\imath$quantum
groups.

In the classical theory of quantum groups, there are several kinds of bases, such as PBW-type bases \cite{Lu93}, canonical bases \cite{Lu90}, and crystal bases \cite{Ka91}. They play essential roles in the representation theory of quantum groups. It is a natural question that if such kinds of bases can be constructed in the framework of $\imath$quantum groups.
The study of canonical bases of modified $\imath$quantum groups (named $\imath$canonical bases) was initiated in \cite{BW18a} and improved in \cite{BW18b, BW21} by Bao and Wang. In addition, there are completely different ``dual $\imath$canonical bases'' of universal $\imath$quantum groups via a quiver variety construction by Lu and Wang \cite{LW21b}. The crystal bases of $\imath$quantum groups were investigated by Watanabe for some quasi-split types \cite{Wat20, Wat24}.

The first work on PBW-type bases of $\imath$quantum groups is Iorgov and Klimyk's investigation \cite{IK00} for the special split $\imath$quantum group associated with the type $A_{n}$ Dynkin diagram, which is the non-standard $q$-deformation of $U(\mathfrak{so}_{n+1})$ introduced in \cite{GK91}. Xu and Yang \cite{XY14} explicitly constructed PBW-type bases for all split $\imath$quantum groups by using the braid group actions given in \cite{KP11}. Lu and Wang \cite{LW22} provided PBW-type bases of quasi-split $\imath$quantum groups associated with simply laced type ADE Dynkin digrams via an $\imath$Hall algebra approach originating from Ringel's construction \cite{Rin96}, and further described these PBW-type bases more explicitly in \cite{LW21a} by using the braid group actions.

On the other hand, in their work \cite{LR95}, Lalonde and Ram constructed a new type of Gr\"{o}bner basis, which are called Lyndon bases nowadays, for universal enveloping algebras of Lie algebras in terms of good words and Lyndon words. Based on his quantum shuffle approach to quantum groups (see \cite{Ro98}), Rosso generalized this construction of Lyndon bases to the positive part of quantum groups in an unpublished paper \cite{Ro02}. As applications, he derived a multiplicative formula for the universal $R$-matrix as a product of $q$-exponentials, and provided a new proof of the existence of canonical bases. In \cite{Lec04}, Leclerc showed that Rosso's Lyndon bases are proportional to Lusztig's PBW-type bases obtained by braid group actions, and used this fact to provide another proof of the existence of dual canonical bases. A direct shuffle construction of dual canonical bases (in a super generalization) is due to Clark, Hill and Wang \cite{CHW16}.

Contrary to those constructions of bases mentioned before, Lyndon bases are much easier to compute. However, there has been no work to give bases for $\imath$quantum groups by using good Lyndon words yet. In this paper, we aim at generalizing the construction in \cite{LR95} to split $\imath$quantum groups. We introduce good words for any split $\imath$quantum group $\mathbf{U}^\imath(\mathfrak{g})$ in the sense of \cite{LR95}, and give the good Lyndon words by which we can construct a Lyndon basis of $\mathbf{U}^\imath(\mathfrak{g})$ (see Theorem~\ref{lyndonbasis}). Furthermore, we provide a relationship between the Lyndon basis and Xu-Yang's PBW-type basis in Theorem~\ref{thm:pbwlyndon}, which says that each Lyndon basis element is the leading term of a PBW-type basis element in some sense. In particular, we prove in Corollary~\ref{cor:lynandpbw} that the Lyndon basis coincides with Xu-Yang's PBW-type basis for the type A split $\imath$quantum group $\mathbf{U}^\imath(\mathfrak{sl}_n)$.

These Lyndon bases enable us to construct canonical bases for $\mathbf{U}^\imath(\mathfrak{g})$, as Rosso \cite{Ro02} and Leclerc \cite{Lec04} did for quantum groups. However, their approach does not work in our case due to the lack of quantum shuffle realization of $\imath$quantum groups at this stage. Instead, we introduce an integral condition by which a canonical basis can be produced (Theorem~\ref{Canonical bases}). As an application, in Theorem~\ref{canofA} we show that the split $\imath$quantum groups $\mathbf{U}^\imath(\mathfrak{sl}_n)$ satisfy this integral condition and hence afford canonical bases. We believe that the integral condition would have further interests; but on the other hand, we must also acknowledge that it is not always valid (see Example~\ref{ex:G2} for type $G_2$).

The paper is organized as follows. In Section 2, we recall the basic definitions of quantum groups and split $\imath$quantum groups. In Section 3, we introduce good Lyndon words and study some of their properties. We construct Lyndon bases for split $\imath$quantum groups in Section 4. We also study the relationship between Lyndon bases and PBW-type bases. Finally, we establish canonical bases for the split $\imath$quantum groups $\mathbf{U}^\imath(\mathfrak{sl}_n)$ in Section 5.

\subsubsection*{Acknowledgement}
We are grateful to Marc Rosso for sharing his unpublished paper \cite{Ro02} with us. We thank Weiqiang Wang for confirming the reasonableness of our canonical bases and informing the work \cite{LW21b}. Finally, we thank the referee for his/her careful reading and suggestions. LL is partially supported by the National Key R\&D Program of China (No. 2024YFA1013802) and the NSF of China (No. 12371028).

\section{Quantum groups and split $\imath$quantum groups}
In this section, we recall the notions of quantum groups and split $\imath$quantum groups. We start by fixing some notations that will be used in the sequel. Let $\mathfrak{g}$ be a complex simple Lie algebra of rank $n$.
We fix a set of simple roots $\Pi=\{\alpha_1,\alpha_2,\ldots,\alpha_n\}$ of $\mathfrak{g}$. Let $\Phi^+$ be the set of positive roots, and $(\cdot, \cdot)$ a symmetric bilinear form on the root lattice $\mathbb{Z}\Pi$.
Denote by $A=(a_{ij})_{1\leq i,j\leq n}$ the corresponding Cartan matrix, i.e., $a_{ij}=\frac{2(\alpha_i,\alpha_j)}{(\alpha_i,\alpha_i)}$. Write $d_i=\frac{(\alpha_i,\alpha_i)}{2}$.
Let $W$ be the Weyl group of $\mathfrak{g}$, which is generated by the simple reflections $s_i=s_{\alpha_i}$ ($1\leq i\leq n$).

\subsection{Quantum groups}

Suppose that $q\in \mathbb{C}$ is not a root of unity and set $q_i=q^{d_i}$ for any $1\leq i\leq n$.
For any positive integer $k$, the quantum integers, quantum factorials, and quantum binomial coefficients are defined respectively by $$[k]_i=\frac{q_i^k-q_i^{-k}}{q_i-q_i^{-1}},\qquad [k]_i!=[1]_i[2]_i\cdots [k]_i, \qquad \left[\begin{array}{c}
m\\k
\end{array}\right]_i=\frac{[m]_i[m-1]_i\cdots[m-k+1]_i}{[k]_i!},$$
where $m\in\mathbb{N}$ with $k\leq m$. If $d_i=1$, we shall write $[k]=[k]_i$ and $[k]!=[k]_i!$.

The quantum group $\mathbf{U} = U_q(\mathfrak{g})$ is the $\mathbb{Q}(q)$-algebra generated by $E_i, F_i, K_i$ and $K_i^{-1}$ for $1 \le i \le n$ subject to the following relations: for $1 \le i, j \le n$,
\begin{align*}
&K_i K_i^{-1} =1 = K_i^{-1}K_i, \quad K_iK_j=K_jK_i,\\
 &K_iE_jK_i^{-1} = q_i^{a_{ij}} E_j, \quad K_iF_jK_i^{-1} = q_i^{-a_{ij}} F_j,\\
  &E_iF_j - F_jE_i = \delta_{ij}\frac{K_i-K_i^{-1}}{q_i-q_i^{-1}},\\
  &\sum_{k=0}^{1-a_{ij}} (-1)^k\begin{bmatrix}1-a_{ij}\\k\end{bmatrix}_i E_i^{1-a_{ij}-k}E_jE_i^k = 0, \quad (i \neq j),\\
  &\sum_{k=0}^{1-a_{ij}} (-1)^k\begin{bmatrix}1-a_{ij}\\k\end{bmatrix}_i F_i^{1-a_{ij}-k}F_jF_i^k = 0, \quad (i \neq j).
\end{align*}

As usual, we denote by $\mathbf{U}^+$ (resp. $\mathbf{U}^-$) the subalgebra of $\mathbf{U}$ generated by $E_i$ (resp. $F_i$) for $1\le i \le n$.

\subsection{Split $\imath$quantum groups}
The split $\imath$quantum group $\mathbf{U}^{\imath}=\mathbf{U}^\imath(\mathfrak{g})$ is the $\mathbb{Q}(q)$-subalgebra of $\mathbf{U}$ generated by $B_i = F_i + \xi_i E_iK_i^{-1} (1 \le i \le n)$,
where $\{\xi_i\}_{1\le i \le n}$ are parameters in $\mathbb{Q}(q)^{\times}$.
It was shown in \cite{Let02} and \cite[Lemma~8.6]{LW21a} that the above algebras with different choice of $\{\xi_i\}_{1\le i \le n}$ are isomorphic.
By \cite{Ko14} and \cite{CLW21}, these generators $B_i$ ($1\leq i\leq n$) satisfy the following relations: for $1\leq i\neq j\leq n$,
\begin{align*}
  &\sum_{k=0}^{1-a_{ij}}(-1)^k\begin{bmatrix}1-a_{ij}\\k\end{bmatrix}_i B_i^{1-a_{ij}-k}B_jB_i^k \\
  &= \begin{cases}
    0,&\text{if }a_{ij}=0,\\[3pt]
    q_i \xi_iB_j,&\text{if }a_{ij}=-1,\\[3pt]
    [2]_i^2 q_i \xi_i(B_iB_j-B_jB_i),&\text{if }a_{ij}=-2,\\[3pt]
    ([3]_i^2+1) q_i \xi_i(B_i^2B_j+B_jB_i^2)&\\[3pt]
    \ \ \ - [2]_i([2]_i[4]_i+q_i^2+q_i^{-2})q_i \xi_i B_iB_jB_i - [3]_i^2 (q_i \xi_i)^2 B_j,&\text{if }a_{ij}=-3.
  \end{cases}
\end{align*}

Let $V$ be a vector space over $\mathbb{Q}(q)$ with a fixed basis $v_1,v_2,\ldots, v_n$. Denote by $T(V)$ the tensor algebra built on $V$. For any $1\leq i\neq j\leq n$, we set\begin{align*}
h_{ij}  &= \begin{cases}
    0,&\text{if }a_{ij}=0,\\[3pt]
    q_i \xi_iv_j,&\text{if }a_{ij}=-1,\\[3pt]
    [2]_i^2 q_i \xi_i(v_i\otimes v_j-v_j\otimes v_i),&\text{if }a_{ij}=-2,\\[3pt]
    ([3]_i^2+1) q_i \xi_i(v_i\otimes v_i\otimes v_j+v_j\otimes v_i\otimes v_i)&\\[3pt]
    \ \ \ - [2]_i([2]_i[4]_i+q_i^2+q_i^{-2})q_i \xi_i v_i\otimes v_j\otimes v_i - [3]_i^2 (q_i \xi_i)^2 v_j,&\text{if }a_{ij}=-3.
  \end{cases}
\end{align*}
Let $I$ be the two-sided ideal generated by elements $$\sum_{k=0}^{1-a_{ij}}(-1)^k\begin{bmatrix}1-a_{ij}\\k\end{bmatrix}_i v_i^{\otimes(1-a_{ij}-k)}\otimes v_j\otimes v_i^{\otimes k}-h_{ij},\quad i\neq j.$$Then the quotient algebra $T(V)/I$ is isomorphic to $\mathbf{U}^{\imath}$. Denote by $\pi^{\imath}:T(V) \to \mathbf{U}^{\imath}$ the homomorphism satisfying $\pi^{\imath}(v_i) = B_i$.

Consider the filtration of $T(V)$ given by $T_m(V) = \bigoplus_{k=0}^m V^{\otimes k}$ with $m\ge 0$. Let $\mathbf{U}^{\imath}_m = \pi^{\imath}\big(T_m(V)\big)$ for $ m \ge 0$ and $ \mathbf{U}^{\imath}_{-1} = 0$.
Then we have
$\mathbf{U}^{\imath}_m \subset \mathbf{U}^{\imath}_{m+1}$ and $\mathbf{U}^{\imath}_m \mathbf{U}^{\imath}_l \subset \mathbf{U}^{\imath}_{m+l}$. Hence $\mathbf{U}^{\imath}_m$ ($m\ge0$) provide a filtration on $\mathbf{U}^{\imath}$.
Let \[
  \text{gr}(\mathbf{U}^{\imath}) = \bigoplus_{m=0}^{\infty} \mathrm{gr}_m(\mathbf{U}^{\imath})
\] be the associated graded algebra. Here
\[ \text{gr}_0(\mathbf{U}^{\imath}) = \mathbf{U}^{\imath}_0 \quad\mbox{and}\quad
  \text{gr}_m(\mathbf{U}^{\imath}) = \mathbf{U}^{\imath}_m/\mathbf{U}^{\imath}_{m-1}\quad (\text{for } m>0).
\] One can define a surjective algebra homomorphism $\varphi^-: \mathbf{U}^- \rightarrow \mathrm{gr}(\mathbf{U}^{\imath})$ which sends $F_i$ to the image of $B_i$ in $\mathrm{gr}(\mathbf{U}^{\imath})$. In general, $$\varphi^-(F_{i_1}F_{i_2}\cdots F_{i_k})=B_{i_1}B_{i_2}\cdots B_{i_k}+\mathbf{U}^{\imath}_{k-1},\quad k\geq 1.$$For each nonnegative integer $m$, denote by $\mathbf{U}^{-}_m$ the subspace of $\mathbf{U}^-$ spanned by elements of the form $F_{i_1}F_{i_2}\cdots F_{i_m}$. Obviously, one has $\dim \mathbf{U}^{-}_m\geq \dim \mathrm{gr}_m(\mathbf{U}^{\imath})$. By an argument in \cite{Let02} (see also the proof of \cite[Theorem~4.4]{XY14}), $\dim \mathbf{U}^{-}_m\leq \dim \mathrm{gr}_m(\mathbf{U}^{\imath})$. So $\dim \mathbf{U}^{-}_m= \dim \mathrm{gr}_m(\mathbf{U}^{\imath})$ for each $m$, and hence $\varphi$ is an isomorphism.

Similarly, define $\varphi^+: \mathbf{U}^+ \rightarrow \mathrm{gr}(\mathbf{U}^{\imath})$ to be the algebra isomorphism sending $E_i$ to the image of $B_i$ in $\mathrm{gr}(\mathbf{U}^{\imath})$.

\section{ Lyndon words and good words}
In this section, we will introduce the notion of $\mathbf{U}^\imath$-good word. We will always assume that $V$ is a vector space over $\mathbb{Q}(q)$ with a preferred basis $v_1, v_2,\ldots,v_n$. For simplicity, we always write a pure tensor $v_{i_1}\otimes \cdots \otimes v_{i_k}$ in $T(V)$ as $v_{i_1} \cdots  v_{i_k}$. We call the basis elements $v_i$ \emph{letters}, and call a pure tensor in $T(V)$ a \emph{word}. The set of words is denote by $\mathcal{W}$. The set including all words and the empty word is denoted by $\mathcal{W}^\ast$. Therefore, $\mathcal{W}^\ast$ is a basis of $T(V)$. For a word $w=v_{i_1} \cdots  v_{i_k}$, the number $k$ is called the \emph{length} of $w$, and is denoted by $\ell(w)$. We assign each word $x=v_{i_1} \cdots  v_{i_k}$ an $\mathbb{N}\Pi$-degree $|x|=\alpha_{i_1}+\cdots+\alpha_{i_k}$. Given $\nu\in \mathbb{Q}(q)$, the $\nu$-commutator $[\cdot,\cdot]_\nu$ on $T(V)$ is defined to be $$[x,y]_\nu=xy-\nu^{(|x|,|y|)}yx,$$ for any $x,y\in \mathcal{W}$.

\subsection{Lyndon words}
We endow the set $\mathcal{W}$ with the following lexicographic order $<$:
\begin{itemize}
\item[(1)] $v_1<v_2<\cdots<v_n$;
\item[(2)] for any $x,y\in \mathcal{W}$, $x<y$ if either $y=xz$ for some $z\in \mathcal{W}$ of length $\geq 1$, or $x=uv_iw_1$ and $y=uv_jw_2$ with $v_i<v_j$ and $u,w_1,w_2\in \mathcal{W}^*$.
\end{itemize}

\begin{definition}
  A word $l=v_{i_1}\cdots v_{i_k}$ is called a \emph{Lyndon word} if either $\ell(l)=1$ or
  $$l<v_{i_j}v_{i_{j+1}}\cdots v_{i_k}\quad\mbox{for all $j=2,\ldots,k$.}$$
  We denote by $\mathcal{L}$ the set of Lyndon words.
\end{definition}

\begin{prop}\label{Basic facts of Lyndon words}
\begin{itemize}  \item[(1)] A word $l$ is Lyndon if and only if either $l=v_i$ for some $i$, or $l=l_1l_2$ for some $l_1,l_2\in \mathcal{L}$ with $l_1<l_2$.
\item[(2)] Any word $w$ can be written uniquely in the form $$w=l_1l_2\cdots l_m$$for some $l_1,\ldots,l_m\in \mathcal{L}$ with $l_1\geq l_2\geq \cdots \geq l_m$.
\end{itemize}
\end{prop}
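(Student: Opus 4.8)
The plan is to reconstruct the classical structure theory of Lyndon words in this setting; the only point requiring care is that the order $<$ from the definition is the length‑lexicographic order (a proper prefix is \emph{smaller}) on the finite alphabet $v_1<\cdots<v_n$. I will first record four elementary properties of $<$ that follow at once from the definition and will be used throughout: (O1) if $a$ is a proper prefix of $b$, then $a<b$; (O2) if $a<b$ and $a$ is not a prefix of $b$, then $ac<bd$ for all $c,d\in\mathcal{W}^{\ast}$; (O3) if $a<b$, then $ca<cb$ for all $c\in\mathcal{W}^{\ast}$; (O4) $<$ is a total order, and since the nonempty suffixes of a fixed word have pairwise distinct lengths they are pairwise distinct, so every nonempty word has a unique $<$-smallest suffix.

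For the implication ``$\Leftarrow$'' of (1), I take $l=l_1l_2$ with $l_1,l_2\in\mathcal{L}$, $l_1<l_2$, and check $l<w$ for every proper suffix $w$ of $l$. Such a $w$ is either a suffix of $l_2$ or of the form $sl_2$ with $s$ a proper suffix of $l_1$. First, $l_1l_2<l_2$: if $l_1$ is not a prefix of $l_2$ this is (O2); otherwise $l_2=l_1r$, and $l_2<r$ (since $l_2$ is Lyndon) gives it via (O3). If $w$ is a proper suffix of $l_2$, then $l_2<w$ and $l_1l_2<l_2$ give $l<w$ by (O4). If $w=sl_2$, then $l_1<s$ (since $l_1$ is Lyndon), and $l_1$ is not a prefix of $s$ because $\ell(l_1)>\ell(s)$, so (O2) gives $l_1l_2<sl_2$. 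For ``$\Rightarrow$'', given $l\in\mathcal{L}$ with $\ell(l)\geq 2$, let $l_2$ be the $<$-smallest proper suffix of $l$ and write $l=l_1l_2$. Then $l_2\in\mathcal{L}$, since a proper suffix of $l_2$ is a proper suffix of $l$, hence $\geq l_2$ by minimality, hence $>l_2$. Next $l_1<l_2$: from $l<l_2$ (as $l_2$ is a proper suffix of the Lyndon word $l$), the assumption $l_1\geq l_2$ would force $l_2$ to be a prefix of $l$, so $l_2<l$ by (O1), in the cases $l_1=l_2$ or $l_2$ a prefix of $l_1$, and otherwise would produce a position at which $l>l_2$; either way a contradiction. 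Finally $l_1\in\mathcal{L}$: if not, let $s$ be the $<$-smallest proper suffix of $l_1$, so $s<l_1<l_2$; then $sl_2$ is a proper suffix of $l$, whence $l_2\leq sl_2$ by minimality, while comparing $sl_2$ with $l_2$---distinguishing whether $s$ is a prefix of $l_2$ and using (O3) together with $l_2\in\mathcal{L}$ in that case---yields $sl_2<l_2$, a contradiction.

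For (2), existence comes from a merging argument: $w=v_{i_1}\cdots v_{i_k}$ is a product of the Lyndon letters $v_{i_j}$, and whenever a product of Lyndon words equal to $w$ has adjacent factors $m_j<m_{j+1}$, one replaces them by the single factor $m_jm_{j+1}\in\mathcal{L}$ (by (1)); this strictly lowers the number of factors, so after finitely many steps one obtains a non-increasing product. For uniqueness I will show that the last factor $l_m$ of any non-increasing Lyndon product $w=l_1\cdots l_m$ is the $<$-smallest nonempty suffix of $w$, hence determined by $w$ alone. Every nonempty suffix of $w$ is either $l_j\cdots l_m$ with $1\leq j\leq m$, or $s\,l_{j+1}\cdots l_m$ with $s$ a proper suffix of $l_j$; in the first case $l_j\geq l_m$, in the second $l_j<s$ so $s>l_j\geq l_m$, and in both cases (O1) together with a one-position comparison shows the suffix is $\geq l_m$. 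Given this, two non-increasing Lyndon factorizations of $w$ have the same last factor; deleting it produces two such factorizations of a strictly shorter word, and induction on $\ell(w)$ completes the argument.

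The step I expect to be the main obstacle is the ``$\Rightarrow$'' direction of (1), specifically the claim that $l_1$---the word obtained from a Lyndon word by deleting its $<$-smallest proper suffix---is again Lyndon; this, and the parallel suffix analysis in the uniqueness half of (2), is where the bookkeeping of the ``prefix'' versus ``first differing letter'' alternatives in the definition of $<$ becomes delicate. Everything else reduces to routine applications of (O1)--(O4).
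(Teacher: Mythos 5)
Your proof is correct, but it cannot be compared step-by-step with the paper's, because the paper does not actually prove this proposition: it simply cites \cite[Proposition~5.1.3 \& Theorem~5.1.5]{Lo97}. What you have written is a self-contained reconstruction of that classical argument, and all the key steps check out: properties (O1)--(O3) do follow from the paper's definition of $<$, the ``$\Leftarrow$'' suffix analysis is complete, the ``$\Rightarrow$'' direction correctly uses the $<$-smallest proper suffix as $l_2$ and derives the needed contradictions for $l_1<l_2$ and $l_1\in\mathcal{L}$, and the uniqueness half of (2) via ``the last factor is the smallest nonempty suffix'' is the standard and correct route. Two small remarks. First, your label ``length-lexicographic'' for $<$ is a misnomer (that phrase would better describe the order $\prec$ introduced later, which compares lengths first); the parenthetical ``a proper prefix is smaller'' shows you mean the right order, and nothing in the argument depends on the name. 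Second, the factorization $l=l_1l_2$ you produce in ``$\Rightarrow$'' is the \emph{standard} factorization (right factor of maximal, equivalently $<$-minimal, suffix type), whereas the remark following the proposition in the paper works with the \emph{co-standard} factorization ($l_1$ the longest proper Lyndon prefix); these generally differ, but the proposition only asserts the existence of some decomposition $l=l_1l_2$ with $l_1,l_2\in\mathcal{L}$ and $l_1<l_2$, so your choice is perfectly adequate for the statement as given.
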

\begin{proof}See e.g., \cite[Proposition~5.1.3 \& Theorem~5.1.5]{Lo97}.\end{proof}

\begin{rem}Given $l\in \mathcal{L}$ with $\ell(l)>1$, by the above proposition, one can write $l=l_1l_2$, where $l_1$ is the left proper Lyndon factor of $l$ with maximal length. Then $l_1\in \mathcal{L}$ and $l_1<l_2$. We call the pair $(l_1,l_2)$ the \emph{co-standard factorization} of $l$, and denote it by $\mu(l)$.\end{rem}

Given a Lyndon word $l\in \mathcal{L}$, we define the \emph{$q^{-1}$-bracketing} $[l]$ of $l$ inductively by $[l]=l$
if $l$ is a letter, and $$[l] = [[l_1],[l_2]]_{q^{-1}}\in T(V)$$ if $\ell(l) > 1$ and $\mu(l)=(l_1,l_2)$.
For any $w\in \mathcal{W}$, if $w=l_1l_2\cdots l_m$ for some $l_1,\ldots,l_m\in \mathcal{L}$ with $l_1\geq l_2\geq \cdots \geq l_m$,
then we define $[w]=[l_1][l_2]\cdots [l_m]$. We also need the notion of \emph{$q$-bracketing} $\{w\}$ of $w$ which replaces the $q^{-1}$-commutator by the $q$-commutator in the above definition.

\subsection{Good words}
Following \cite{LR95}, we define another total order $\preceq$ on $\mathcal{W}$ as follows.
For any $x,y\in \mathcal{W}$, $x\prec y$ if either the length of $x$ is strictly less than that of $y$,
or they have the same length and $x> y$.
Notice that there are only finitely many words less than a given word with respect to this order. So $(\mathcal{W},\prec)$ is well ordered. We can apply the well-founded induction to $(\mathcal{W},\prec)$.

 The following properties of the order $\prec$ are useful in our study of Lyndon bases.

\begin{prop}\label{Basic facts for LR order}Suppose that $x,y\in \mathcal{W}$ satisfy $x\prec y$.\begin{itemize}
\item[(1)] For any $w,z\in \mathcal{W}^\ast$, $wxz\prec wyz$.
\item[(2)] If $\ell(x)=\ell(y)$, then $x w\prec yz$ for any $w,z\in \mathcal{W}^\ast$ with $\ell(w)=\ell(z)$.
\end{itemize}\end{prop}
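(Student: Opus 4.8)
The plan is to prove both statements directly from the definition of $\prec$, by unpacking what $x \prec y$ means and tracking how length and lexicographic order behave under the relevant word operations.

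For part (1), I would first observe that $\ell(wxz) = \ell(w) + \ell(x) + \ell(z)$ and similarly $\ell(wyz) = \ell(w) + \ell(y) + \ell(z)$, so prepending $w$ and appending $z$ shifts both lengths by the same amount. Thus if $\ell(x) < \ell(y)$ we immediately get $\ell(wxz) < \ell(wyz)$, hence $wxz \prec wyz$. In the remaining case $\ell(x) = \ell(y)$ and $x > y$ in the lexicographic order; then $\ell(wxz) = \ell(wyz)$, and it suffices to show $wxz > wyz$. Here I would invoke the description of $<$ from item (2) of its definition: since $x > y$ with $\ell(x) = \ell(y)$, neither is a prefix of the other (equal length), so $x = u' v_j w_1$ and $y = u' v_i w_2$ with $v_i < v_j$ for a common prefix $u'$; prepending $w$ and appending $z$ keeps this same ``first point of difference'' structure with prefix $wu'$, so $wyz < wxz$, i.e. $wxz \prec wyz$.

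For part (2), with the additional hypotheses $\ell(x) = \ell(y)$ (so $x > y$) and $\ell(w) = \ell(z)$, I want $xw \prec yz$, i.e. $\ell(xw) = \ell(yz)$ and $xw > yz$. The length equality is immediate from $\ell(x) = \ell(y)$ and $\ell(w) = \ell(z)$. For $xw > yz$: since $x > y$ and $\ell(x) = \ell(y)$, write the first point of difference as above, $x = u' v_j w_1$, $y = u' v_i w_2$ with $v_i < v_j$ and $\ell(w_1) = \ell(w_2)$. Then $xw = u' v_j (w_1 w)$ and $yz = u' v_i (w_2 z)$ differ first at the letter after the common prefix $u'$, where $x$ has $v_j > v_i$; so $yz < xw$ regardless of what $w, z$ are, giving $xw \prec yz$.

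I do not expect any serious obstacle here; this is a routine verification. The only point requiring slight care is making sure that when $\ell(x) = \ell(y)$ and $x > y$, the comparison genuinely falls into the ``differ at some letter'' case of the definition of $<$ rather than the ``one is a prefix of the other'' case — but this is automatic since two distinct words of equal length cannot have one be a prefix of the other. A second minor point is that in part (1) the elements $w$ and $z$ are allowed to be empty, which is harmless: the prefix $u'$ simply becomes $u'$ or $wu'$ and the argument goes through verbatim. I would write the proof as a short case split on whether $\ell(x) < \ell(y)$ or $\ell(x) = \ell(y)$, citing the definition of $\prec$ and of the lexicographic order $<$ at each step.
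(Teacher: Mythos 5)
Your proof is correct and follows essentially the same route as the paper's: split on whether $\ell(x)<\ell(y)$ or $\ell(x)=\ell(y)$, and in the latter case compare at the first point of difference after a common prefix. The paper's version is more terse but identical in substance; your explicit remark that equal-length distinct words cannot be prefixes of one another is a sound (if implicit in the paper) justification for landing in the ``differ at a letter'' case.
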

\begin{proof}Observe that if $\ell(x)<\ell(y)$, then $\ell(wxz)<\ell(wyz)$.

If $\ell(x)=\ell(y)$, then $x=uv_ix'$ and $y=uv_jy'$ for some $u,x',y'\in \mathcal{W}$ and $1\leq j<i\leq n$. Obviously one has $$wuv_ix'z>wuv_jy'z$$ and $$uv_ix'w>uv_jy'z.$$\end{proof}

For any word $w=v_{i_1}v_{i_2}\cdots v_{i_k}$, we write $B_w=B_{i_1}B_{i_2}\cdots B_{i_k}$ in $\mathbf{U}^\imath$.
If $x=\sum a_iw_i\in T(V)$ with $a_i\in \mathbb{Q}(q)$ and $w_i\in \mathcal{W}$,
we write $B_x=\sum a_iB_{w_i}$. Similar conventions are adopted for elements in $\mathbf{U}^+$ and $\mathbf{U}^-$.

\begin{definition} A word $g\in \mathcal{W}$ is said to be $\mathbf{U}^\imath$-\emph{good} if $B_g\in \mathbf{U}^\imath$ can not
  be expressed as a linear combination of $B_w\in \mathbf{U}^\imath$ for $w\in \mathcal{W}$ with $w\prec g$. We denote by $\mathcal{G}(\mathbf{U}^\imath)$ the set of $\mathbf{U}^\imath$-good words.
\end{definition}

\begin{rem}A word $w$ is not $\mathbf{U}^\imath$-good if and only if $$ B_w - \sum_{ \ell(u) = \ell(w),u > w}a_{u,w} B_u \in\mathbf{U}^{\imath}_{\ell(w)-1},$$ where $a_{u,w}\in \mathbb{Q}(q)$.
In fact, a word $w$ is not $\mathbf{U}^\imath$-good if and only if $$B_w=\sum_{ \ell(u) = \ell(w),u > w}a_{u,w} B_u+\sum_{ \ell(v) < \ell(w)}a_{v,w} B_v,$$where $a_{u,w},a_{v,w}\in \mathbb{Q}(q)$ and some of them are nonzero. Moreover, since $B_x\notin \mathbf{U}^{\imath}_{\ell(x)-1}$ for all $x\in \mathcal{W}$, any of the above expressions of $B_w$ with $w\notin \mathcal{G}(\mathbf{U}^\imath)$ must imply that some $a_{u,w}$ should be nonzero.\end{rem}

Recall that a word $g$ is $\mathbf{U}^+$-good if $E_g\in \mathbf{U}^+$ cannot be expressed as a linear combination of $E_w\in \mathbf{U}^+$ for $w\in \mathcal{W}$ with $w> g$ (for more details, see \cite{Lec04}).

\begin{prop}\label{Equivalence of two goodnesses}
  A word is $\mathbf{U}^{\imath}$-good if and only if it is $\mathbf{U}^+$-good.
\end{prop}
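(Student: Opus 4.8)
The key tool is the filtration on $\mathbf{U}^\imath$ together with the isomorphism $\varphi^+ \colon \mathbf{U}^+ \to \mathrm{gr}(\mathbf{U}^\imath)$ sending $E_i$ to the image of $B_i$. The plan is to translate both notions of goodness into statements about leading terms in $\mathrm{gr}(\mathbf{U}^\imath)$ and then compare. First I would record the basic observation that for any word $w$ of length $k$, the element $B_w$ lies in $\mathbf{U}^\imath_k$ but not in $\mathbf{U}^\imath_{k-1}$ (this is noted in the Remark after the definition of $\mathbf{U}^\imath$-good words, and follows from $\dim \mathbf{U}^-_m = \dim \mathrm{gr}_m(\mathbf{U}^\imath)$), and that the image of $B_w$ in $\mathrm{gr}_k(\mathbf{U}^\imath)$ equals $\varphi^+(E_w)$.

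Next I would unwind the two definitions in parallel. For $\mathbf{U}^+$-goodness, recall that $\mathbf{U}^+$ is $\mathbb{N}\Pi$-graded and that within each graded piece $E_w$ of length $k$ and degree $|w|$ span, with relations being homogeneous; a word $g$ fails to be $\mathbf{U}^+$-good exactly when $E_g = \sum_{u > g} a_{u,g} E_u$ with all $u$ of the same length (and degree) as $g$, since the defining relations of $\mathbf{U}^+$ are homogeneous of a fixed length. On the $\imath$-side, the Remark already reduces non-$\mathbf{U}^\imath$-goodness of $w$ to the condition $B_w - \sum_{\ell(u)=\ell(w),\, u>w} a_{u,w} B_u \in \mathbf{U}^\imath_{\ell(w)-1}$. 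Applying the quotient map $\mathbf{U}^\imath_k \to \mathrm{gr}_k(\mathbf{U}^\imath)$ and the isomorphism $\varphi^+$, this last condition is equivalent to $E_w = \sum_{\ell(u)=\ell(w),\, u>w} a_{u,w} E_u$ holding in $\mathbf{U}^+$ — and then one must check that only same-length words appear, which is forced because $\varphi^+$ preserves the $\mathbb{N}\Pi$-grading and words of a fixed degree have a fixed length. Thus the two failure-conditions are literally the same, which gives the equivalence.

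The remaining point to pin down carefully is the reduction, on the $\mathbf{U}^+$ side, that non-goodness can be witnessed using only words $u$ with $\ell(u) = \ell(w)$: this is where homogeneity of $\mathbf{U}^+$ (both the $\mathbb{N}$-grading by length and the $\mathbb{N}\Pi$-grading) is used, and it parallels exactly the argument in the Remark for the $\imath$-case. I expect this bookkeeping — keeping the length grading and the $\mathbb{N}\Pi$-grading aligned under $\varphi^+$, and confirming that the ``lower length'' terms in the $\imath$-case are exactly what gets killed when passing to $\mathrm{gr}$ — to be the main (though modest) obstacle; once it is in place the proof is a direct dictionary translation via $\varphi^+$. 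I would write it as: $w$ is not $\mathbf{U}^\imath$-good $\iff$ the class of $B_w$ in $\mathrm{gr}_{\ell(w)}(\mathbf{U}^\imath)$ is a combination of classes of $B_u$ with $u > w$, $\ell(u) = \ell(w)$ $\iff$ (apply $(\varphi^+)^{-1}$) $E_w$ is a combination of $E_u$ with $u > w$ $\iff$ $w$ is not $\mathbf{U}^+$-good.
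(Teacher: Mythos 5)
Your proposal is correct and follows essentially the same route as the paper: the paper's proof likewise reduces non-$\mathbf{U}^\imath$-goodness to the condition $B_w - \sum_{\ell(u)=\ell(w),\,u>w} a_{u,w}B_u \in \mathbf{U}^\imath_{\ell(w)-1}$ (via the Remark) and then transports it across the isomorphism $\varphi^+\colon \mathbf{U}^+ \to \mathrm{gr}(\mathbf{U}^\imath)$ in both directions. The only difference is that you make explicit the homogeneity bookkeeping on the $\mathbf{U}^+$ side (that witnesses of non-goodness may be taken of the same length), which the paper leaves implicit.
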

\begin{proof} We will show the equivalent statement that a word is not $\mathbf{U}^{\imath}$-good if and only if it is not $\mathbf{U}^+$-good.

If $w$ is not $\mathbf{U}^\imath$-good, then \begin{align*}
E_w&=(\varphi^+)^{-1}\big(B_w+\mathbf{U}^{\imath}_{\ell(w)-1}\big)\\[3pt]
&=(\varphi^+)^{-1}\bigg(\sum_{ \ell(u) = \ell(w),u > w}a_{u,w} B_u+\mathbf{U}^{\imath}_{\ell(w)-1}\bigg)\\[3pt]
&=\sum_{ \ell(u) = \ell(w),u > w}a_{u,w} E_u.
\end{align*}
Here, $\varphi^+$ is the algebra isomorphism from $\mathbf{U}^+$ to $\text{gr}(\mathbf{U}^{\imath})$ given in the last section. So $w$ is not $\mathbf{U}^+$-good.

Conversely, if $g$ is not $\mathbf{U}^+$-good, then \[
        E_w = \sum_{\ell(u) = \ell(w), u > w}a_{u,w} E_u,\] with $a_u\in \mathbb{Q}(q)$. Therefore,
  \begin{align*}
B_w+\mathbf{U}^{\imath}_{\ell(w)-1}&=\varphi^+( E_w)\\[3pt]
&=\varphi^+\bigg(\sum_{ \ell(u)=\ell(w),u > w}a_{u,w} E_u \bigg)\\[3pt]
&=\sum_{\ell(u) =\ell(w),u > w}a_{u,w} B_u+\mathbf{U}^{\imath}_{\ell(w)-1},
\end{align*}which means $w$ is not $\mathbf{U}^\imath$-good.\end{proof}

We denote $\mathcal{GL}(\mathbf{U}^\imath)=\mathcal{G}(\mathbf{U}^\imath)\cap \mathcal{L}$, the set of $\mathbf{U}^\imath$-good Lyndon words.

\begin{rem}\label{Complete list of good Lyndon words}
In \cite[Section 8]{Lec04}, Leclerc gave a complete description of the sets of $\mathbf{U}^+$-good Lyndon words. By Proposition \ref{Equivalence of two goodnesses}, one has a complete description of $\mathcal{GL}(\mathbf{U}^\imath)$.\end{rem}

The following property of $\mathbf{U}^\imath$-good words is a direct modification of the one for classical good words (see \cite[Proposition~2.4]{LR95}).
For the completeness of the paper, we provide its proof here.

\begin{prop}\label{Factors of good words}
  Every factor of a $\mathbf{U}^\imath$-good word is $\mathbf{U}^\imath$-good.
\end{prop}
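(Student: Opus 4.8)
The plan is to prove the contrapositive: if a factor of a word $g$ is not $\mathbf{U}^\imath$-good, then $g$ itself is not $\mathbf{U}^\imath$-good. Write $g = r s t$ where $r, t \in \mathcal{W}^\ast$ and $s \in \mathcal{W}$ is the non-good factor. By the second Remark after the definition of $\mathbf{U}^\imath$-good words, the failure of goodness of $s$ means $B_s = \sum_{\ell(u)=\ell(s),\, u>s} a_{u,s} B_u + (\text{element of } \mathbf{U}^\imath_{\ell(s)-1})$. The goal is to left-multiply by $B_r$ and right-multiply by $B_t$ and argue that the resulting expression for $B_g$ witnesses that $g$ is not good.

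First I would record the two elementary facts that make this work. One is the filtration compatibility $\mathbf{U}^\imath_m \mathbf{U}^\imath_l \subset \mathbf{U}^\imath_{m+l}$, already noted in Section 2, so that $B_r \cdot \mathbf{U}^\imath_{\ell(s)-1} \cdot B_t \subset \mathbf{U}^\imath_{\ell(r)+\ell(s)-1+\ell(t)} = \mathbf{U}^\imath_{\ell(g)-1}$. The other is Proposition~\ref{Basic facts for LR order}(1): from $s \prec u$ we get $rst \prec rut$, i.e. each word $rut$ appearing in $B_r B_s B_t = \sum a_{u,s} B_{rut} + (\text{lower})$ satisfies $rut \succ g$. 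Combining, $B_g - \sum_{u} a_{u,s} B_{rut} \in \mathbf{U}^\imath_{\ell(g)-1}$ with all $rut \succ g$ of length $\ell(g)$; hence $B_g$ is a linear combination of $B_w$ with $w \prec g$ (namely, the $B_{rut}$ and elements coming from the lower filtration piece rewritten in the basis $\{B_w\}$), so $g$ is not $\mathbf{U}^\imath$-good. This handles a single factor; a general factor is obtained by taking first a prefix and then a suffix (or vice versa), so one iteration of the above argument — or simply the observation that ``factor'' already means a contiguous subword $s$ with $g = rst$ — suffices.

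The only subtle point, and the main thing to be careful about, is that the coefficients $a_{u,s}$ are not all zero precisely because of the final sentence of that Remark: since $B_x \notin \mathbf{U}^\imath_{\ell(x)-1}$ for every word $x$, the relation expressing the non-goodness of $s$ cannot have all $a_{u,s} = 0$ (otherwise $B_s \in \mathbf{U}^\imath_{\ell(s)-1}$, a contradiction). The same reasoning applied to $g$ shows that in the derived expression $B_g = \sum a_{u,s} B_{rut} + (\text{lower order terms})$ the leading part is genuinely nonzero, which is exactly what the definition of ``not $\mathbf{U}^\imath$-good'' requires. So the argument is short once these bookkeeping facts are in place; there is no real obstacle, only the need to quote Proposition~\ref{Basic facts for LR order}(1) and the filtration property at the right moments.

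Alternatively — and this may be the cleanest route to actually write down — one can bypass the $\imath$-specific computations entirely by invoking Proposition~\ref{Equivalence of two goodnesses}: a word is $\mathbf{U}^\imath$-good iff it is $\mathbf{U}^+$-good, and the statement ``every factor of a $\mathbf{U}^+$-good word is $\mathbf{U}^+$-good'' is exactly \cite[Proposition~2.4]{LR95} (in Leclerc's reformulation with the order $>$ on words of equal length). However, since the paper explicitly says it wants to give the direct proof ``for the completeness of the paper'', I would present the filtration argument above as the main proof and mention the shortcut via Proposition~\ref{Equivalence of two goodnesses} only in passing, if at all.
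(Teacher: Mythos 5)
Your proposal is correct and takes essentially the same route as the paper: the paper also argues by contradiction/contraposition, writing the good word as $xyz$ with $y$ not good, expanding $B_y=\sum_{u\prec y}a_{u,y}B_u$, and invoking Proposition~\ref{Basic facts for LR order}(1) to get $xuz\prec xyz$, so that $B_{xyz}=\sum a_{u,y}B_{xuz}$ exhibits $xyz$ as not good (it does not need to separate the equal-length leading terms from the lower filtration piece, since $u\prec y$ already gives $xuz\prec xyz$ in one stroke). One small bookkeeping point: the words $u$ with $\ell(u)=\ell(s)$ and $u>s$ lexicographically satisfy $u\prec s$ in the order $\prec$, so the intermediate claim should be $rut\prec g$ rather than ``$s\prec u$ hence $rut\succ g$'' --- your final conclusion is stated correctly, so this is only a notational inversion, not a gap.
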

\begin{proof}
  Suppose that $x,y,z\in \mathcal{W}^\ast$ satisfy $xyz\in \mathcal{G}(\mathbf{U}^\imath)$ but $y\notin \mathcal{G}(\mathbf{U}^\imath)$.
Thus $$B_y=\sum_{u\prec y}a_{u,y}B_u,$$ with $a_{u,y}\in \mathbb{Q}(q)$. Since $u\prec y$, we have $xuz\prec xyz$. So
  \begin{align*}
    B_{xyz}=B_xB_yB_z
    =B_x\bigg(\sum_{u\prec y}a_{u,y}B_u\bigg)B_z
    =\sum_{u\prec y}a_{u,y} B_x B_u B_z
    =\sum_{u\prec y}a_{u,y} B_{xuz},
  \end{align*}
and hence $xyz\notin \mathcal{G}(\mathbf{U}^\imath)$, a contradiction.
\end{proof}

The following result follows immediately from Propositions \ref{Basic facts of Lyndon words}
 \& \ref{Factors of good words}.

\begin{prop}\label{Decomposition of good Lyndon words}
  Any $\mathbf{U}^\imath$-good word $g$ can be written uniquely in the form $$g=l_1l_2\cdots l_m$$for some $l_1,\ldots,l_m\in \mathcal{GL}(\mathbf{U}^\imath)$ with $l_1\geq l_2\geq \cdots \geq l_m$.
\end{prop}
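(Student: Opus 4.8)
The plan is to combine Proposition~\ref{Basic facts of Lyndon words}(2), which gives a unique factorization of an arbitrary word into a non-increasing product of Lyndon words, with Proposition~\ref{Factors of good words}, which says goodness is inherited by factors. Start with a $\mathbf{U}^\imath$-good word $g$. By Proposition~\ref{Basic facts of Lyndon words}(2) there is a unique way to write $g=l_1l_2\cdots l_m$ with $l_1,\ldots,l_m\in\mathcal{L}$ and $l_1\geq l_2\geq\cdots\geq l_m$. So the only thing to prove is that each $l_i$ is in fact $\mathbf{U}^\imath$-good, i.e.\ lies in $\mathcal{GL}(\mathbf{U}^\imath)=\mathcal{G}(\mathbf{U}^\imath)\cap\mathcal{L}$.

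For that, observe that each $l_i$ is a (contiguous) factor of $g$: writing $g=(l_1\cdots l_{i-1})\,l_i\,(l_{i+1}\cdots l_m)$ exhibits $l_i$ as the middle piece $y$ in the notation $g=xyz$ of Proposition~\ref{Factors of good words}. Since $g\in\mathcal{G}(\mathbf{U}^\imath)$, that proposition forces $l_i\in\mathcal{G}(\mathbf{U}^\imath)$. Combined with $l_i\in\mathcal{L}$ this gives $l_i\in\mathcal{GL}(\mathbf{U}^\imath)$, as required. Uniqueness of the decomposition $g=l_1\cdots l_m$ into $\mathcal{GL}(\mathbf{U}^\imath)$-words with $l_1\geq\cdots\geq l_m$ is then immediate: any such decomposition is in particular a decomposition into Lyndon words with the non-increasing property, and Proposition~\ref{Basic facts of Lyndon words}(2) says there is only one of those.

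There is essentially no obstacle here — the statement is a formal consequence of the two cited propositions, and the proof is three lines. The only point requiring a moment's care is the bookkeeping that the $l_i$ occurring in the Lyndon factorization genuinely are \emph{contiguous} factors (subwords obtained by deleting a prefix and a suffix), so that Proposition~\ref{Factors of good words} applies verbatim; this is clear from the form $g=l_1\cdots l_m$ as a concatenation. One could equally phrase the argument as: the Lyndon factorization of $g$ and a hypothetical $\mathcal{GL}(\mathbf{U}^\imath)$-factorization must agree by uniqueness in Proposition~\ref{Basic facts of Lyndon words}(2), and existence of the latter is guaranteed because each Lyndon factor of a good word is good by Proposition~\ref{Factors of good words}.
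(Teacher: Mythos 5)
Your proposal is correct and is exactly the argument the paper intends: the paper states the result as an immediate consequence of Proposition~\ref{Basic facts of Lyndon words}(2) (unique non-increasing Lyndon factorization) and Proposition~\ref{Factors of good words} (factors of good words are good), which is precisely the combination you spell out.
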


\section{Lyndon bases of $\imath$quantum groups}
In this section, we will construct the Lyndon basis of $\mathbf{U}^\imath$. We keep all assumptions in the above sections.
\subsection{Lyndon bases of $\mathbf{U}^\imath$}

\begin{thm}\label{Good words bases for iquantum}
  The set $$\mathbf{B}_{\mathcal{G}}=\{B_g~|~g\in \mathcal{G}(\mathbf{U}^\imath)\}$$ forms a basis of $\mathbf{U}^\imath$.
\end{thm}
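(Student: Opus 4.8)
The plan is to establish that $\mathbf{B}_{\mathcal{G}}$ is both spanning and linearly independent by exploiting the graded isomorphism $\varphi^+ : \mathbf{U}^+ \to \mathrm{gr}(\mathbf{U}^\imath)$ together with the known fact that the analogous set $\{E_g \mid g \in \mathcal{G}(\mathbf{U}^+)\}$ is a basis of $\mathbf{U}^+$ (this is Leclerc's result, see \cite{Lec04}), and Proposition~\ref{Equivalence of two goodnesses} which identifies $\mathcal{G}(\mathbf{U}^\imath) = \mathcal{G}(\mathbf{U}^+)$.

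First I would prove spanning. Let $w \in \mathcal{W}$ be arbitrary; I claim $B_w$ lies in the span of $\{B_g \mid g \in \mathcal{G}(\mathbf{U}^\imath)\}$, by well-founded induction on $(\mathcal{W}, \prec)$. If $w \in \mathcal{G}(\mathbf{U}^\imath)$ there is nothing to prove. Otherwise, by the definition of good word (and the Remark following it), $B_w = \sum_{u \prec w} a_{u,w} B_u$ for scalars $a_{u,w} \in \mathbb{Q}(q)$; by the induction hypothesis each $B_u$ with $u \prec w$ is in the desired span, hence so is $B_w$. Since $\mathcal{W}$ together with the empty word spans $T(V)$ and $\pi^\imath$ is surjective, $\{B_w \mid w \in \mathcal{W}^\ast\}$ spans $\mathbf{U}^\imath$, and therefore $\mathbf{B}_{\mathcal{G}}$ spans $\mathbf{U}^\imath$.

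Next I would prove linear independence. Suppose $\sum_{g} a_g B_g = 0$ is a nontrivial relation with $g$ ranging over a finite subset of $\mathcal{G}(\mathbf{U}^\imath)$ and not all $a_g = 0$. Let $m$ be the maximal length occurring among the $g$ with $a_g \neq 0$; collect the relation into homogeneous-in-length pieces is not quite available since $\mathbf{U}^\imath$ is only filtered, so instead I pass to the associated graded. Working modulo $\mathbf{U}^\imath_{m-1}$, the relation gives $\sum_{\ell(g) = m,\, a_g \neq 0} a_g\big(B_g + \mathbf{U}^\imath_{m-1}\big) = 0$ in $\mathrm{gr}_m(\mathbf{U}^\imath)$. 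Applying $(\varphi^+)^{-1}$ and using $\varphi^+(E_g) = B_g + \mathbf{U}^\imath_{\ell(g)-1}$, this becomes $\sum_{\ell(g)=m,\, a_g \neq 0} a_g E_g = 0$ in $\mathbf{U}^+$. But each such $g$ is $\mathbf{U}^+$-good by Proposition~\ref{Equivalence of two goodnesses}, so $\{E_g\}$ over these $g$ is part of a basis of $\mathbf{U}^+$, forcing all these $a_g = 0$ — contradicting maximality of $m$. Hence $\mathbf{B}_{\mathcal{G}}$ is linearly independent.

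The main obstacle I anticipate is the bookkeeping in the linear independence argument: one must be careful that passing to $\mathrm{gr}_m$ correctly isolates exactly the top-length terms, and that the scalars $a_g$ indexing distinct good words of length $m$ really do correspond to distinct basis elements $E_g$ of $\mathbf{U}^+$ (so that their vanishing in $\mathbf{U}^+$ is legitimate). This rests entirely on Proposition~\ref{Equivalence of two goodnesses} and on Leclerc's theorem that $\{E_g \mid g \in \mathcal{G}(\mathbf{U}^+)\}$ is a basis of $\mathbf{U}^+$; once those are in hand the rest is routine. A small point worth spelling out is that $B_x \notin \mathbf{U}^\imath_{\ell(x)-1}$ for every word $x$ — equivalently that $\varphi^+$ is an isomorphism, which was established in Section~2 — since this is what guarantees the top-degree part of the relation is genuinely nonzero.
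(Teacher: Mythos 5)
Your proof is correct, and the spanning half is essentially identical to the paper's (a well-founded induction on $(\mathcal{W},\prec)$, which the paper phrases as a terminating rewriting process). The linear independence half, however, takes a genuinely different route. The paper argues directly from the definition of a good word: in a nontrivial relation $\sum a_g B_g=0$ one picks the $\prec$-maximal $g_0$ with $a_{g_0}\neq 0$ and observes that the relation then expresses $B_{g_0}$ as a combination of $B_w$ with $w\prec g_0$, contradicting goodness of $g_0$. This is completely self-contained and uses no input beyond the definition. You instead pass to the top filtration degree $m$, transport the relation through $(\varphi^+)^{-1}$ to $\mathbf{U}^+$, and invoke Proposition~\ref{Equivalence of two goodnesses} together with Leclerc's theorem that $\{E_g\mid g\in\mathcal{G}(\mathbf{U}^+)\}$ is a basis of $\mathbf{U}^+$ \cite{Lec04}. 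This is valid (the lower-length terms do die in $\mathrm{gr}_m(\mathbf{U}^\imath)$, and distinct good words of length $m$ index distinct basis elements $E_g$), and it is in fact exactly the mechanism the paper deploys later in the proof of Theorem~\ref{lyndonbasis}(1); but for the present statement it is heavier than necessary, since it imports an external basis theorem for $\mathbf{U}^+$ where the maximality argument needs nothing at all. What your version buys in exchange is that it makes the relationship to the $\mathbf{U}^+$ picture explicit at the outset; what the paper's version buys is independence from \cite{Lec04} at this stage.
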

\begin{proof}
  We first show that $\mathbf{U}^\imath$ is spanned by the set $\{B_g~|~g\in \mathcal{G}(\mathbf{U}^\imath)\}$.
It suffices to show that for any word $w$ the element $B_w$ is a linear combination of $B_g$ with $g\in \mathcal{G}(\mathbf{U}^\imath)$.
If $w\in \mathcal{G}(\mathbf{U}^\imath)$, we are done. If not, then $B_w=\sum_{x\prec w}a_{x,w}B_x$ for some $a_{x,w}\in \mathbb{Q}(q)$.
If all $x$ appearing in the sum are good, we are done.
If there exists a word $x\prec w$ such that $x\notin \mathcal{G}(\mathbf{U}^\imath)$, we can express $B_x$ as a linear combination of $B_y$ with $y \prec x$.
Since the number of words strictly smaller than $w$ with respect to $\prec$ is finite, we get the desired result.

  Now assume that $\sum_{g\in \mathcal{G}(\mathbf{U}^\imath)} a_g B_g=0$ with $a_g=0$ for all but finitely many $g$.
If there exist some $g\in \mathcal{G}(\mathbf{U}^\imath)$ such that $a_g\neq 0$, then choose the maximal one $g_0$ among these elements.
Then one can express $B_{g_0}$ as a linear combination of $B_g$ with $g \prec g_0$, a contradiction.
\end{proof}

\begin{rem}In the proof of the above theorem, one sees that the expression of an element $B_w\in \mathbf{U}^\imath$ with $w\in \mathcal{W}$ in terms of $\mathbf{B}_{\mathcal{G}}$ is of the form $$B_w=\sum_{g\in \mathcal{G}(\mathbf{U}^\imath), g\preceq w}a_{g,w}B_g,$$where $a_{g,w}\in \mathbb{Q} (q)$.\end{rem}

\begin{lem}\label{bracket triangular}
  For any $l\in\mathcal{L}$, we have $$[l]=l+\sum_{\ell(w)=\ell(l),w>l}b_{w,l} w$$ for some $b_{w,l}\in \mathbb{Z}[q,q^{-1}]$.
\end{lem}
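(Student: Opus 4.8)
The plan is to prove the statement by well-founded induction on the length $\ell(l)$, using the recursive definition of the $q^{-1}$-bracketing together with the monotonicity properties of the order $\prec$ recorded in Proposition~\ref{Basic facts for LR order}. The base case $\ell(l)=1$ is immediate, since then $[l]=l$ and the sum is empty.

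For the inductive step, suppose $\ell(l)>1$ with co-standard factorization $\mu(l)=(l_1,l_2)$, so that $\ell(l_1),\ell(l_2)<\ell(l)$ and $l=l_1l_2$ with $l_1<l_2$. By the induction hypothesis, $[l_1]=l_1+\sum_{\ell(w)=\ell(l_1),\,w>l_1}b_{w,l_1}w$ and $[l_2]=l_2+\sum_{\ell(w')=\ell(l_2),\,w'>l_2}b_{w',l_2}w'$, with coefficients in $\Zq$. Now expand
\[
[l]=[[l_1],[l_2]]_{q^{-1}}=[l_1][l_2]-q^{-(|l_1|,|l_2|)}[l_2][l_1].
\]
Substituting the two expansions and multiplying out, every term is a word of length $\ell(l_1)+\ell(l_2)=\ell(l)$ with coefficient in $\Zq$ (note $q^{-(|l_1|,|l_2|)}\in\Zq$). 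The key point is to isolate the unique term equal to $l=l_1l_2$: it arises only from the product of the leading term $l_1$ of $[l_1]$ with the leading term $l_2$ of $[l_2]$ inside $[l_1][l_2]$, with coefficient $1$. One must check that no cancellation or extra contribution of $l_1l_2$ comes from the other summands — in particular from the term $q^{-(|l_1|,|l_2|)}l_2l_1$ in $[l_2][l_1]$ and from the mixed terms. This is where the properties of $\prec$ enter: by Proposition~\ref{Basic facts for LR order}(1), any word of the form $w l_2$ with $w>l_1$ (same length as $l_1$) satisfies $w l_2 > l_1 l_2 = l$, and similarly $l_1 w'>l$ for $w'>l_2$, and $w w'>l$ for such $w,w'$; likewise every word appearing in $q^{-(|l_1|,|l_2|)}[l_2][l_1]$ begins with a letter $\geq$ the first letter of $l_2$, which is strictly greater than the first letter of $l_1$ (since $l_1<l_2$ and $l_1$ is Lyndon, so $l_1<l_2$ forces the comparison at the first letter unless $l_2=l_1z$; this subcase needs a short separate argument using that $l$ is Lyndon), hence all such words are $>l$ in the lexicographic order and have length $\ell(l)$.

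The main obstacle I anticipate is precisely the bookkeeping of the ``first letter'' comparison: showing that every word occurring in $[l]$ other than $l$ itself is lexicographically strictly greater than $l$ and has the same length. Length is automatic from bilinearity of the bracket. For the lexicographic inequality, the cleanest route is to observe that $[l]$ is a $\Zq$-linear combination of shuffles/rearrangements of the letters of $l$, and to use that $l$, being a Lyndon word, is the lexicographically smallest among all words obtained by permuting its multiset of letters — combined with the fact that $l_1l_2$ is the only such rearrangement occurring with its leading coefficient, by the inductive structure. Once this ``minimality of the Lyndon word'' observation is in place, the inequality $w>l$ for all other words $w$ follows, and collecting terms gives the asserted triangular form with $\Zq$-coefficients. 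I would state the minimality fact as a small auxiliary observation (it is standard, cf. Lothaire~\cite{Lo97}) and invoke it to close the induction.
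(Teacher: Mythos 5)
Your inductive skeleton is the right one (the paper itself only says ``easy induction on $\ell(l)$'' and defers to Leclerc), and your treatment of the words coming from $[l_1][l_2]$ is correct: for $w>l_1$ of the same length as $l_1$ and $w'>l_2$ of the same length as $l_2$, the words $wl_2$, $l_1w'$, $ww'$ are all $>l_1l_2$ because a strict letter inequality at some position is preserved under appending or prepending equal-length prefixes. The problem is entirely in how you dispose of the words coming from $q^{-(|l_1|,|l_2|)}[l_2][l_1]$, and there you have a genuine gap. First, your ``first letter'' argument is wrong as stated: $l_1<l_2$ does not force the comparison to happen at the first letter except when $l_2=l_1z$; for instance $l_1=v_1v_2<v_1v_3=l_2$ share the first letter and $l_2\neq l_1z$. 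Second, the fallback you propose to patch this --- that a Lyndon word is lexicographically smallest among \emph{all} words obtained by permuting its multiset of letters --- is false. Lyndon words are minimal among their \emph{rotations} and their suffixes, not among all anagrams: $v_1v_2v_1v_3$ is Lyndon, yet its rearrangement $v_1v_1v_2v_3$ is strictly smaller. So this ``minimality'' observation cannot be stated as an auxiliary lemma, and the argument as written does not close.

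The correct way to handle the $[l_2][l_1]$ terms is via the Lyndon property of $l$ itself rather than a letterwise comparison of $l_1$ with $l_2$. Since $l=l_1l_2$ is Lyndon, $l<l_2$; and since $\ell(l_2)<\ell(l)$, this inequality cannot be of prefix type, so it is witnessed by a strict letter inequality at some position $k\le\ell(l_2)$, i.e. $l$ and $l_2$ agree up to position $k-1$ and the $k$-th letter of $l_2$ exceeds that of $l$. Consequently $l_2u>l$ for \emph{every} word $u$, and more generally $w'u>l$ for every $w'\ge l_2$ with $\ell(w')=\ell(l_2)$ and every $u$ (compare the first positions where $w'$ deviates from $l_2$ and where $l_2$ deviates from $l$). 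This covers all four families of words $w'w$ occurring in $[l_2][l_1]$, including $l_2l_1$ itself, and together with your analysis of $[l_1][l_2]$ it shows $l=l_1l_2$ occurs with coefficient exactly $1$ and every other word is strictly larger of the same length, with coefficients in $\Zq$ since $q^{-(|l_1|,|l_2|)}\in\Zq$.
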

\begin{proof}It is an easy induction on $\ell(l)$. For more details, see \cite[Prop.~19]{Lec04}.\end{proof}

In fact, this lemma can be generalized to arbitrary words.

\begin{prop}\label{bracket triangular for words}For any $w\in \mathcal{W}$, we have $$[w]=w+\sum_{\ell(x)=\ell(w),x>w}b_{x,w}x,$$where $b_{x,w}\in \Zq$.
\end{prop}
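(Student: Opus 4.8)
The plan is to reduce the statement to the already-established Lemma~\ref{bracket triangular} by exploiting the factorization of an arbitrary word into a non-increasing product of Lyndon words together with the multiplicativity of the bracketing and the compatibility of the lexicographic order with concatenation. Recall from Proposition~\ref{Basic facts of Lyndon words}(2) that any $w\in\mathcal{W}$ may be written uniquely as $w=l_1l_2\cdots l_m$ with $l_1\geq l_2\geq\cdots\geq l_m$ in $\mathcal{L}$, and that $[w]$ is by definition $[l_1][l_2]\cdots[l_m]$.

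First I would apply Lemma~\ref{bracket triangular} to each factor, writing $[l_i]=l_i+\sum_{\ell(w_i)=\ell(l_i),\,w_i>l_i}b_{w_i,l_i}w_i$ with coefficients in $\Zq$. Multiplying these $m$ expansions out in $T(V)$, every resulting monomial is a concatenation $w_1w_2\cdots w_m$ where $\ell(w_i)=\ell(l_i)$ for each $i$ and $w_i\geq l_i$ for each $i$, with at least one strict inequality unless the monomial is exactly $w=l_1\cdots l_m$. Thus $[w]=w+\sum_x c_x x$ where each $x$ in the sum has $\ell(x)=\sum_i\ell(l_i)=\ell(w)$ and the coefficients $c_x$ lie in $\Zq$ (being $\Zq$-linear combinations of products of the $b_{w_i,l_i}$).

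The key point is then to verify that every such $x=w_1w_2\cdots w_m$ with $(w_1,\dots,w_m)\neq(l_1,\dots,l_m)$ satisfies $x>w$ in the lexicographic order. Let $j$ be the first index where $w_j\neq l_j$; then $w_i=l_i$ for $i<j$ and $w_j>l_j$ with $\ell(w_j)=\ell(l_j)$, so $w_j$ and $l_j$ differ first in some position where the letter of $w_j$ is larger. Since $w_j$ is not a prefix of $l_j$ and vice versa (equal lengths, $w_j\neq l_j$), comparing $x=l_1\cdots l_{j-1}w_j\cdots w_m$ with $w=l_1\cdots l_{j-1}l_j\cdots l_m$ the first point of disagreement occurs inside the $w_j$ versus $l_j$ block, at a position where $x$ has the strictly larger letter; hence $x>w$ by clause~(2) of the definition of $<$. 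This is the step I expect to require the most care: one must track the bookkeeping of ``first index of disagreement'' cleanly and make sure the equal-length hypothesis on each $w_i$ (not just on the total word) is genuinely used, so that disagreements cannot be absorbed by length shifts.

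Assembling these observations gives $[w]=w+\sum_{\ell(x)=\ell(w),\,x>w}b_{x,w}x$ with $b_{x,w}\in\Zq$, as desired. An alternative, slightly slicker route would be to induct on $\ell(w)$ directly: if $w$ is Lyndon, invoke Lemma~\ref{bracket triangular}; otherwise peel off $[l_1]$, apply the inductive hypothesis to $l_2\cdots l_m$ and Lemma~\ref{bracket triangular} to $l_1$, and use Proposition~\ref{Basic facts for LR order}(1) (stability of $\prec$, equivalently of $<$ within a fixed length, under left/right concatenation) to control the product — but the direct multiplying-out argument above is the cleanest and I would present that.
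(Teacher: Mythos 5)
Your proof is correct and follows essentially the same route as the paper's: both reduce to Lemma~\ref{bracket triangular} via the Lyndon factorization $w=l_1\cdots l_m$ and the multiplicativity $[w]=[l_1]\cdots[l_m]$, and then check that every cross term has the same length as $w$ and is lexicographically larger. The only difference is organizational — you expand all $m$ factors at once and verify the comparison by locating the first point of disagreement directly from the definition of $<$, whereas the paper inducts on $m$ and cites Proposition~\ref{Basic facts for LR order}; your stated ``alternative route'' is in fact the paper's proof.
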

\begin{proof}Write $w=l_1l_2\cdots l_k$ with $l_1,l_2,\dots,l_k\in \mathcal{L}$ and $l_1\geq l_2\geq \cdots\geq l_k$. We use induction on $k$. If $k=1$, it is just above lemma. Assume that $k>1$ and the result holds for $k-1$. Set $w'=l_2\cdots l_k$. Then by the inductive hypothesis, $$[w']=w'+\sum_{\ell(y)=\ell(w'),y>w'}b_{y,w'}y$$where $b_{y,w'}\in \Zq$. We also write $$[l_1]=l_1+\sum_{\ell(x)=\ell(l_1),x>l_1}b_{x,l_1}x$$for some $b_{x,l_1}\in \Zq$. Hence \begin{align*}
[w]&=[l_1][w']\\
&=\bigg(l_1+\sum_{\ell(x)=\ell(l_1),x>l_1}b_{x,l_1}x\bigg)\bigg(w'+\sum_{\ell(y)=\ell(w'),y>w'}b_{y,w'}y\bigg)\\
&=l_1 w'+\sum_{\ell(x)=\ell(l_1),x>l_1}b_{x,l_1}x w'+\sum_{\ell(y)=\ell(w'),y>w'}b_{y,w'}l_1y\\
&\ \ \ \ +\sum_{\ell(x)=\ell(l_1),x>l_1}\sum_{\ell(y)=\ell(w'),y>w'}b_{x,l_1}b_{y,w'}x y.
\end{align*}By Proposition \ref{Basic facts for LR order}, one has $xw',l_1y,xy\prec w$. Since $\ell(xw')=\ell(l_1y)=\ell(xy)=\ell(w)$, we have $xw',l_1y,xy> w$.\end{proof}

\begin{thm}\label{lyndonbasis}
Each of the following sets is a basis of $\mathbf{U}^{\imath}$:
\begin{itemize}
\item[(1)] $\mathbf{B}_{\mathcal{GL}}=\{B_{l_1l_2\cdots l_m}~|~l_1,\ldots,l_m\in \mathcal{GL}(\mathbf{U}^{\imath})\text{ and }l_1\geq \cdots\geq l_m\}$;
\item[(2)] $\mathbf{B}_{\mathcal{L}}=\{B_{[l_1][l_2]\cdots [l_m]}~|~l_1,\ldots,l_m\in \mathcal{GL}(\mathbf{U}^{\imath})\text{ and }l_1\geq \cdots\geq l_m\}$.
  \end{itemize}
\end{thm}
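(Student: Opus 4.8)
The plan is to derive both statements from Theorem~\ref{Good words bases for iquantum} together with the triangularity results of Lemma~\ref{bracket triangular} and Proposition~\ref{bracket triangular for words}, using Proposition~\ref{Decomposition of good Lyndon words} as the combinatorial bridge. By Proposition~\ref{Decomposition of good Lyndon words}, the map $(l_1,\ldots,l_m)\mapsto l_1l_2\cdots l_m$ is a bijection from the indexing set of $\mathbf{B}_{\mathcal{GL}}$ (and of $\mathbf{B}_{\mathcal{L}}$) onto $\mathcal{G}(\mathbf{U}^\imath)$. Hence $\mathbf{B}_{\mathcal{GL}}$ is literally the set $\mathbf{B}_{\mathcal{G}}$ from Theorem~\ref{Good words bases for iquantum} (just re-indexed by the good Lyndon factorization), so (1) is immediate. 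The content is in (2), and the strategy there is to show that the transition matrix between $\mathbf{B}_{\mathcal{L}}$ and $\mathbf{B}_{\mathcal{GL}}$ is unitriangular with respect to a suitable order, hence invertible.

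First I would fix a good word $g = l_1l_2\cdots l_m$ with $l_1\geq\cdots\geq l_m$ in $\mathcal{GL}(\mathbf{U}^\imath)$. By Proposition~\ref{bracket triangular for words} applied to $g$ (noting that $g = l_1\cdots l_m$ is exactly the Lyndon factorization, so $[g] = [l_1]\cdots[l_m]$ in the notation of the paper), we have
\begin{equation*}
[l_1][l_2]\cdots[l_m] = g + \sum_{\ell(x)=\ell(g),\, x>g} b_{x,g}\, x,
\end{equation*}
with $b_{x,g}\in\Zq$. Applying the (linear) map $y\mapsto B_y$ gives
\begin{equation*}
B_{[l_1][l_2]\cdots[l_m]} = B_g + \sum_{\ell(x)=\ell(g),\, x>g} b_{x,g}\, B_x.
\end{equation*}
Now each $x$ appearing satisfies $\ell(x)=\ell(g)$ and $x>g$, hence $x\prec g$ in the Lalonde--Ram order; by the Remark following Theorem~\ref{Good words bases for iquantum}, $B_x = \sum_{h\in\mathcal{G}(\mathbf{U}^\imath),\, h\preceq x} a_{h,x} B_h$. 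Substituting, and recording that every good word $h$ occurring here satisfies $h\preceq x\prec g$, we obtain
\begin{equation*}
B_{[l_1][l_2]\cdots[l_m]} = B_g + \sum_{h\in\mathcal{G}(\mathbf{U}^\imath),\, h\prec g} c_{h,g}\, B_h
\end{equation*}
for suitable $c_{h,g}\in\mathbb{Q}(q)$. Thus, expanding $\mathbf{B}_{\mathcal{L}}$ in the basis $\mathbf{B}_{\mathcal{G}}=\mathbf{B}_{\mathcal{GL}}$, the transition matrix is unitriangular with respect to the well-order $\prec$ on $\mathcal{G}(\mathbf{U}^\imath)$ (identifying a good word with its good Lyndon factorization via Proposition~\ref{Decomposition of good Lyndon words}). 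Since $(\mathcal{W},\prec)$ is well-ordered with only finitely many elements below any given word, this matrix is invertible, and therefore $\mathbf{B}_{\mathcal{L}}$ is also a basis of $\mathbf{U}^\imath$.

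The step I expect to require the most care is the bookkeeping that makes the triangularity argument rigorous: one must check that Proposition~\ref{bracket triangular for words} applies with $[g] = [l_1]\cdots[l_m]$ precisely because $l_1\geq\cdots\geq l_m$ is the Lyndon factorization of $g$ (so the definition of $[\,\cdot\,]$ on words matches $B_{[l_1]\cdots[l_m]}$), and that the passage through the non-good words $x>g$ and their re-expansion in $\mathbf{B}_{\mathcal{G}}$ genuinely lands strictly below $g$ in $\prec$, so that no circularity sneaks in. Once the unitriangularity is established, invertibility and hence the basis property follow formally from well-foundedness of $\prec$, exactly as in the proof of Theorem~\ref{Good words bases for iquantum}.
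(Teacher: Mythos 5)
There is a genuine gap in your treatment of part (1). Proposition \ref{Decomposition of good Lyndon words} only says that every $\mathbf{U}^\imath$-good word factors \emph{uniquely} as a decreasing product of good Lyndon words; it does not say that every decreasing product $l_1l_2\cdots l_m$ of good Lyndon words is itself a good word. That converse inclusion is exactly what the paper deduces \emph{from} Theorem \ref{lyndonbasis} in Remark \ref{Description of good words}, so you cannot feed it in as an input: your claim that the factorization map is a bijection onto $\mathcal{G}(\mathbf{U}^\imath)$, and hence that $\mathbf{B}_{\mathcal{GL}}$ ``is literally'' $\mathbf{B}_{\mathcal{G}}$, is circular. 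A priori $\mathbf{B}_{\mathcal{G}}$ could be a proper subset of $\mathbf{B}_{\mathcal{GL}}$; spanning of $\mathbf{B}_{\mathcal{GL}}$ does follow from Theorem \ref{Good words bases for iquantum} together with the inclusion you do have, but linear independence of the possibly larger set $\mathbf{B}_{\mathcal{GL}}$ is the actual content of (1), and your proposal does not address it. The paper proves it by taking a putative dependence relation, isolating the terms of maximal length, passing to $\mathrm{gr}(\mathbf{U}^\imath)\cong\mathbf{U}^+$ via $\varphi^+$, and invoking Leclerc's result that $\{E_{l_1l_2\cdots l_m}\}$ is a basis of $\mathbf{U}^+$. (Alternatively, you could first establish the missing surjectivity by combining Proposition \ref{Equivalence of two goodnesses} with Leclerc's description of $\mathbf{U}^+$-good words, but some such external input is unavoidable.)

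Your argument for part (2) is essentially correct and is a faithful expansion of the paper's one-line deduction: the unitriangularity coming from Proposition \ref{bracket triangular for words} (noting $x>g$ with $\ell(x)=\ell(g)$ gives $x\prec g$), followed by re-expansion of each $B_x$ in $\mathbf{B}_{\mathcal{G}}$ using the remark after Theorem \ref{Good words bases for iquantum}, does land strictly below $g$ in $\prec$, and well-foundedness of $\prec$ then gives invertibility of the transition matrix. Note, however, that even here you implicitly need the identification of index sets from Remark \ref{Description of good words} (available only once (1) is properly proved) to know that $\mathbf{B}_{\mathcal{L}}$ is indexed by $\mathcal{G}(\mathbf{U}^\imath)$ rather than by a strictly larger set of words, in which case it could not be linearly independent.
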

\begin{proof}
  For (1), we first note, from Proposition \ref{Decomposition of good Lyndon words} and Theorem~\ref{Good words bases for iquantum}, that $\mathbf{U}^{\imath}=\mathrm{Span}\mathbf{B}_{\mathcal{GL}}$. Assume that there exists a subset $S$ of $\mathcal{W}$ and nonzero numbers $\{a_w\}_{w\in S} \subset \mathbb{Q}(q)$ such that $B_w\in \mathbf{B}_{\mathcal{GL}}$ and $\sum_{w\in S} a_w B_w = 0$. Let $S_0 $ be the set of words with maximal length in $S$. Then
$\sum_{w\in S_0} a_w B_w+\mathbf{U}^{\imath}_{m-1}$ is zero in $\text{gr}_m(\mathbf{U}^{\imath})$, where $m$ is the length of words in $S_0$.
As a consequence, $$\sum_{w\in S_0} a_w E_w = (\varphi^+)^{-1}\bigg(\sum_{w\in S_0} a_w B_w+\mathbf{U}^{\imath}_{m-1}\bigg)=0.$$ But the set
  \[
    \{E_{l_1l_2\cdots l_m}~|~l_1,\ldots,l_m\in \mathcal{GL}(\mathbf{U}^+)\text{ and }l_1\geq \cdots\geq l_m\}
  \]
forms a basis of $\mathbf{U}^+$ (see Theorem 23 in \cite{Lec04}), a contradiction.

Statement (2) follows immediately from (1) and Lemma~\ref{bracket triangular}.
\end{proof}

\begin{rem}\label{Description of good words}By the above theorem, we have $$\mathcal{G}(\mathbf{U}^\imath)=\{l_1l_2\cdots l_m~|~l_1,\ldots,l_m\in \mathcal{GL}(\mathbf{U}^{\imath})\text{ and }l_1\geq \cdots\geq l_m\}.$$ Indeed, it follows from Proposition \ref{Decomposition of good Lyndon words} that $\mathcal{G}(\mathbf{U}^\imath)$ is contained in the set on the right hand side. Conversely, since both $\mathbf{B}_{\mathcal{G}}$ and $\mathbf{B}_{\mathcal{GL}}$ are bases of $\mathbf{U}^{\imath}$, we get the inclusion in the other direction.
\end{rem}

\begin{definition}\label{def:lyndon}
We call $\mathbf{B}_{\mathcal{L}}$ the \emph{Lyndon basis} of $\mathbf{U}^{\imath}$.
\end{definition}

\begin{example}[Lyndon basis of type $A_2$]By Lemma \ref{Complete list of good Lyndon words} and \cite[8.1]{Lec04}, the good Lyndon words in type $A_2$ are $v_1, v_1v_2, v_2$. We have\begin{align*}
    B_{[v_1]} &=  B_{v_1}, \\
    B_{[v_1v_2]} &= B_{ v_1v_2}-qB_{ v_2v_1}, \\
    B_{[v_2]} &= B_{v_2}.
\end{align*}\end{example}

\begin{example}[Lyndon basis of type $B_2$]\label{Lyndon basis of B2}By Lemma \ref{Complete list of good Lyndon words} and \cite[8.2]{Lec04}, the good Lyndon words are $v_1, v_1v_2, v_1v_2v_2, v_2$. We adopt the convention that $$(\alpha_1,\alpha_1)=4,\quad (\alpha_2,\alpha_2)=2,\quad(\alpha_1,\alpha_2)=-1.$$ Then\begin{align*}
    B_{[v_1]} &=  B_{v_1}, \\
    B_{[v_1v_2]} &= B_{ v_1v_2}-q^2B_{ v_2v_1}, \\
        B_{[v_1v_2v_2]} &= B_{ v_1v_2v_2}-(q^2+1)B_{ v_2v_1v_2}+q^2B_{v_2 v_2v_1}, \\
    B_{[v_2]} &= B_{v_2}.
\end{align*}\end{example}

\subsection{Relations with PBW-type bases} In this subsection, we fix $\xi_i = - q_i^{-2}$, for all $i$, in the definition of $B_i$.

In \cite{Lu93}, Lusztig defined a series of braid group actions on $\mathbf{U}$. For $1 \le i \le n$, let $T_i$ be the algebra automorphism on $\mathbf{U}$ defined below:
  \[
    T_i(E_i) = -K_i^{-1} F_i,\qquad  T_i(F_i) = -E_iK_i,\qquad  T_i(K_i) = K_i^{-1},
  \]
  and for $j \neq i$,
  \begin{align*}
    T_i(E_j) &= \sum_{s=0}^{-a_{ij}} (-1)^s q_i^{-s} E_i^{(s)}E_jE_i^{(-a_{ij}-s)},\quad
    T_i(F_j) = \sum_{s=0}^{-a_{ij}} (-1)^s q_i^{s} F_i^{(-a_{ij}-s)}F_jF_i^{(s)},\\
     T_i(K_j) &= K_j K_i^{-a_{ij}}.
  \end{align*}
The notation $T_i$ is just the one $T'_{i,-1}$ in \cite[37.1.3]{Lu93} with $v=q$. Our choice of braid group actions here coincides with those in \cite{KP11} and \cite{XY14}. Notice that the braid actions used in \cite{Lec04} are $T'_{i,-1}$'s with $v=q^{-1}$.

Denote by $\tau:\mathbf{U}\rightarrow\mathbf{U}$ the unique $\mathbb{Q}$-algebra antiautomorphism satisfying $$\tau(q)=q^{-1},\quad \tau{E_i}=F_i,\quad \tau (F_i)=E_i,\quad \tau(K_i)=K_i^{-1}.$$ Then one has $$\tau T_i=T_i\tau$$for each $i$.

Let $w_0=s_{i_1}s_{i_2}\cdots s_{i_N}$ be a reduced expression of the longest element $w_0\in W$. Denote $\beta_j = s_{i_1}s_{i_2}\cdots s_{i_{j-1}}(\alpha_j)$ for $1\leq j\leq N$.
Then $\Phi^+=\{\beta_1, \beta_2,\ldots, \beta_N\}$. Denote
\[
  F_{\beta_j}=T_{i_1}T_{i_2}\cdots T_{i_{j-1}}(F_{i_j}), \quad 1 \le j \le N.\]
Lusztig showed that the set\[
    \{F_{\beta_1}^{a_1}F_{\beta_2}^{a_2}\cdots F_{\beta_N}^{a_N}~|~a_1,a_2,\ldots,a_N\in \mathbb{Z}_{\geq 0}\}
  \] is a basis of $\mathbf{U}^-$.

Kolb and Pellegrini established the braid group actions for $\mathbf{U}^{\imath}$ as follows:
\begin{prop}[{\cite[Theorem~3.3]{KP11}}]
  There exist $n$ algebra automorphisms $\tau_i$ $(1\leq i\leq n)$ such that
  \[
    \tau_i(B_j)=\begin{cases}
      B_j,&\text{if }a_{ij}=0 \mbox{ or } 2,\\
      B_iB_j-q_iB_jB_i,&\text{if }a_{ij}=-1,\\
      \sum\limits_{s=0}^2(-1)^sq^sB_i^{(2-s)}B_jB_i^{(s)}+B_j,&\text{if }a_{ij}=-2,\\[3pt]
      \sum\limits_{s=0}^3(-1)^sq^sB_i^{(3-s)}B_jB_i^{(s)}+\frac{B_iB_j-q^3B_jB_i}{q[3]!}+B_iB_j-qB_jB_i,&\text{if }a_{ij}=-3,
    \end{cases}
  \]
  for all $1\leq i,j\leq n$.
\end{prop}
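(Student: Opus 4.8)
The plan is to first produce $\tau_i$ as an algebra endomorphism of $\mathbf{U}^\imath$ and then to upgrade it to an automorphism. For the first step, define $\widehat{\tau}_i\colon T(V)\to\mathbf{U}^\imath$ by sending each letter $v_j$ to the right-hand side of the displayed formula for $\tau_i(B_j)$; since $T(V)$ is the free algebra on the $v_j$'s, this is automatically an algebra homomorphism, and it factors through $\pi^\imath\colon T(V)\to\mathbf{U}^\imath$ — hence defines an endomorphism $\tau_i$ of $\mathbf{U}^\imath$ — precisely when $\widehat{\tau}_i$ kills every generator of the defining ideal, i.e.\ when $\widehat{\tau}_i(S_{jk})=0$ in $\mathbf{U}^\imath$ for each $\imath$Serre element $S_{jk}$ $(j\neq k)$. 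Establishing these identities is the heart of the argument.

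To organize the verification, note that $\widehat{\tau}_i(v_j)$ involves only $B_i,B_j$ and $\widehat{\tau}_i(v_k)$ only $B_i,B_k$, so $\widehat{\tau}_i(S_{jk})$ lies in the subalgebra generated by $\{B_i,B_j,B_k\}$; as $\{\alpha_i,\alpha_j,\alpha_k\}$ belongs to a finite-type Cartan datum of rank $\le 3$, one is reduced to finitely many identities, one per rank $\le 3$ Dynkin (sub)diagram. For $i\neq j,k$ the leading term admits a conceptual treatment via the graded algebra $\mathrm{gr}(\mathbf{U}^\imath)$ and the isomorphism $\varphi^+\colon\mathbf{U}^+\xrightarrow{\sim}\mathrm{gr}(\mathbf{U}^\imath)$: the top-length component of $\widehat{\tau}_i(v_j)$ equals a scalar multiple of $\varphi^+\big(T_i(E_j)\big)$, while the top-length component of $S_{jk}$ is the quantum Serre element of $\mathbf{U}^+$ (the correction $h_{jk}$ having strictly smaller length), so the top-length component of $\widehat{\tau}_i(S_{jk})$ is $\varphi^+$ applied to $T_i$ of that quantum Serre element — which vanishes since $T_i$ is a well-defined automorphism of $\mathbf{U}$ \cite{Lu93}. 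This already forces $\widehat{\tau}_i(S_{jk})\in\mathbf{U}^\imath_{m-1}$ for the relevant $m$; the remaining, strictly-lower-length terms — where the normalization $\xi_i=-q_i^{-2}$ and the precise right-hand sides of the $\imath$Serre relations come into play, and where for $i\in\{j,k\}$ one instead checks the rank $\le 2$ relation directly — must then be cleared by direct computation in each of the finitely many cases. I expect this bookkeeping of the lower-order corrections to be the main obstacle; it is carried out in \cite{KP11}.

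Once $\tau_i$ is a well-defined endomorphism, invertibility is cheap. The order-reversal on tensors descends to a $\mathbb{Q}(q)$-linear algebra anti-automorphism $\sigma$ of $\mathbf{U}^\imath$ with $\sigma(B_j)=B_j$ for all $j$ and $\sigma^2=\mathrm{id}$: indeed each $S_{jk}$ satisfies $\sigma(S_{jk})=(-1)^{1-a_{jk}}S_{jk}$, by the symmetry $\begin{bmatrix}1-a_{jk}\\p\end{bmatrix}_j=\begin{bmatrix}1-a_{jk}\\1-a_{jk}-p\end{bmatrix}_j$ together with the explicit form of $h_{jk}$, so $\sigma$ preserves the defining ideal. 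Then $\sigma\tau_i\sigma$ is again a well-defined endomorphism, and one verifies on generators that $\tau_i\sigma\tau_i(B_j)=B_j$ for every $j$ — a computation involving only the rank $\le 2$ relation between $B_i$ and $B_j$; for instance, when $a_{ij}=-1$ one gets $\tau_i\sigma\tau_i(B_j)=(1+q_i^2)B_iB_jB_i-q_iB_i^2B_j-q_iB_jB_i^2$, which equals $B_j$ by the $\imath$Serre relation with $\xi_i=-q_i^{-2}$. Since $\tau_i\sigma\tau_i$ and $\sigma$ are anti-automorphisms agreeing on $B_1,\dots,B_n$, they coincide; combined with $\sigma^2=\mathrm{id}$ this yields $\tau_i\circ(\sigma\tau_i\sigma)=\mathrm{id}=(\sigma\tau_i\sigma)\circ\tau_i$, so $\tau_i$ is an automorphism with $\tau_i^{-1}=\sigma\tau_i\sigma$. (The braid relations among the various $\tau_i$, which this proposition does not assert, would be proved afterwards by a separate argument.)
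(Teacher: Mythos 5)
The paper contains no proof of this proposition: it is imported verbatim from \cite[Theorem~3.3]{KP11}, so there is no in-house argument to compare yours against. Judged on its own terms, your outline reconstructs the right strategy, and the invertibility half is genuinely complete and correct: the order-reversing anti-automorphism $\sigma$ fixing each $B_j$ does descend to $\mathbf{U}^\imath$ (each $\imath$Serre element is a $\sigma$-eigenvector with eigenvalue $(-1)^{1-a_{jk}}$, since the binomial symmetry flips the left-hand side by that sign and the explicit $h_{jk}$ transforms the same way), your computation of $\tau_i\sigma\tau_i(B_j)=B_j$ for $a_{ij}=-1$ with $\xi_i=-q_i^{-2}$ checks out, and two anti-endomorphisms agreeing on generators coincide, giving $\tau_i^{-1}=\sigma\tau_i\sigma$. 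The leading-term observation is also sound and is a nice use of the graded isomorphism $\varphi^+$ of Section~2: modulo lower filtration degree, $\widehat{\tau}_i$ applied to an $\imath$Serre element is $\varphi^+$ of $T_i$ applied to a quantum Serre element of $\mathbf{U}^+$, hence vanishes in the top graded piece.

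The gap is in the existence half, and it sits exactly where you flag it. Knowing that $\widehat{\tau}_i(S_{jk})\in\mathbf{U}^\imath_{m-1}$ is far from knowing it is zero: the whole content of the $\imath$Serre relations, as opposed to the ordinary Serre relations, is the lower-order correction $h_{jk}$, and it is precisely the cancellation of these lower-length terms --- for each rank $\le 3$ subdiagram, for the rank $2$ cases with $i\in\{j,k\}$, and in the $a_{ij}=-2,-3$ cases where $\tau_i(B_j)$ itself carries correction terms of smaller length --- that constitutes the proof. You explicitly defer this bookkeeping to \cite{KP11}, so as a self-contained argument the proposal does not establish that $\tau_i$ is well defined; likewise the identity $\tau_i\sigma\tau_i=\sigma$ is only verified for $a_{ij}\in\{0,2,-1\}$ and still needs the $-2$ and $-3$ cases. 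None of this is a wrong turn --- it is essentially the route of \cite{KP11} --- but here the computations \emph{are} the proof, and they are not in your write-up.
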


 Denote
\[
  B_{\beta_j}=\tau_{i_1}\tau_{i_2}\cdots \tau_{i_{j-1}}(B_{i_j}), \quad 1 \le j \le N.
\]
\begin{prop}[{\cite[Theorem~4.4]{XY14}}]
  The set\[
   \mathbf{B}_{\mathrm{PBW}}= \{B_{\beta_1}^{a_1}B_{\beta_2}^{a_2}\cdots B_{\beta_N}^{a_N}~|~a_1,a_2,\ldots,a_N\in \mathbb{Z}_{\geq 0}\}
  \] forms a basis of $\mathbf{U}^{\imath}$.
\end{prop}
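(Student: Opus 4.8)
The plan is to transport Lusztig's proof that $\{F_{\beta_1}^{a_1}\cdots F_{\beta_N}^{a_N}\}$ is a basis of $\mathbf{U}^-$ to $\mathbf{U}^{\imath}$ by means of the filtration $\{\mathbf{U}^{\imath}_m\}$ and the graded isomorphism $\varphi^-\colon\mathbf{U}^-\xrightarrow{\ \sim\ }\mathrm{gr}(\mathbf{U}^{\imath})$ of Section~2, under which the weight space $\mathbf{U}^-_{-\gamma}$ lands in $\mathrm{gr}_{\mathrm{ht}(\gamma)}(\mathbf{U}^{\imath})$, where $\mathrm{ht}(\gamma)$ is the sum of the simple-root coefficients of $\gamma$. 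The whole argument rests on one \emph{Key Lemma}: for all $1\le k\le j\le N$, writing $\gamma_k=s_{i_k}s_{i_{k+1}}\cdots s_{i_{j-1}}(\alpha_{i_j})\in\Phi^+$, the element $\tau_{i_k}\tau_{i_{k+1}}\cdots\tau_{i_{j-1}}(B_{i_j})$ lies in $\mathbf{U}^{\imath}_{\mathrm{ht}(\gamma_k)}$ and its image in $\mathrm{gr}_{\mathrm{ht}(\gamma_k)}(\mathbf{U}^{\imath})$ equals $\varphi^-\!\big(T_{i_k}T_{i_{k+1}}\cdots T_{i_{j-1}}(F_{i_j})\big)$; in particular $B_{\beta_j}\in\mathbf{U}^{\imath}_{\mathrm{ht}(\beta_j)}$ lifts $\varphi^-(F_{\beta_j})$.

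Granting the Key Lemma, I would finish by a routine filtered-algebra argument. The element $B_{\beta_1}^{a_1}\cdots B_{\beta_N}^{a_N}$ lies in $\mathbf{U}^{\imath}_m$ with $m=\sum_j a_j\,\mathrm{ht}(\beta_j)$, and (since $\mathrm{gr}(\mathbf{U}^{\imath})\cong\mathbf{U}^-$ has no zero divisors) its image in $\mathrm{gr}_m(\mathbf{U}^{\imath})$ is $\varphi^-\!\big(F_{\beta_1}^{a_1}\cdots F_{\beta_N}^{a_N}\big)$. Because Lusztig's monomials of a fixed weight form a basis of that weight space of $\mathbf{U}^-$ and $\varphi^-$ is a graded isomorphism, the images of the members of $\mathbf{B}_{\mathrm{PBW}}$ of filtration degree exactly $m$ form a basis of $\mathrm{gr}_m(\mathbf{U}^{\imath})$ for each $m$. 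Linear independence of $\mathbf{B}_{\mathrm{PBW}}$ then follows by passing to the top filtration degree occurring in a hypothetical relation, and spanning follows by induction on the filtration degree, using $\bigcup_m\mathbf{U}^{\imath}_m=\mathbf{U}^{\imath}$. (Alternatively, even the weaker assertion $B_{\beta_j}\in\mathbf{U}^{\imath}_{\mathrm{ht}(\beta_j)}$ suffices, combined with the Poincar\'e-series count $\#\{\mathbf{a}:\sum_j a_j\,\mathrm{ht}(\beta_j)\le m\}=\dim\mathbf{U}^-_{\le m}=\dim\mathbf{U}^{\imath}_m$, the last equality being recorded in Section~2.)

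To prove the Key Lemma I would induct downward on $k$, the case $k=j$ being trivial. For the inductive step, write $x=\tau_{i_{k+1}}\cdots\tau_{i_{j-1}}(B_{i_j})$; by hypothesis $x=\widetilde\xi+(\text{lower})$, where $\widetilde\xi\in\mathbf{U}^{\imath}$ is a chosen monomial lift of $\xi:=T_{i_{k+1}}\cdots T_{i_{j-1}}(F_{i_j})$, a $\mathbb{N}\Pi$-homogeneous element of $\mathbf{U}^-$ of weight $-\gamma_{k+1}$, and $(\text{lower})\in\mathbf{U}^{\imath}_{\mathrm{ht}(\gamma_{k+1})-1}$. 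Apply the automorphism $\tau_{i_k}$ and expand via the Kolb--Pellegrini formulas. Comparing them term by term with Lusztig's formulas for $T_{i_k}(F_\bullet)$ — and observing that the $q$'s appearing in the Kolb--Pellegrini formulas are exactly $q_{i_k}$ in the cases $a_{i_k,\bullet}\le-1$ that occur — identifies the top filtration-degree part of $\tau_{i_k}(\widetilde\xi)$ with $\varphi^-(T_{i_k}\xi)$; here one uses crucially that $s_{i_k}\gamma_{k+1}=\gamma_k$ is positive, so that $T_{i_k}\xi\in\mathbf{U}^-$ by Lusztig, and that $\mathrm{ht}(\gamma_k)=\sum_p (\gamma_{k+1})_p\,(1-a_{i_k,p})$. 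The summand $(\text{lower})$ contributes only in strictly smaller filtration degree by the same bookkeeping.

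The main obstacle is precisely this last step when $\gamma_{k+1}$ genuinely involves $\alpha_{i_k}$: then a na\"ive letter-by-letter application of $\tau_{i_k}$ to a monomial expansion of $\widetilde\xi$ produces spurious contributions of filtration degree larger than $\mathrm{ht}(\gamma_k)$, coming from the occurrences of $v_{i_k}$ (on which $\tau_{i_k}$ acts trivially rather than lowering the degree). One must show that all these excess top-degree terms cancel. I expect the clean route is not to expand $\widetilde\xi$ crudely but to carry a sharper inductive statement — that $x$ is literally the image under $B_i\mapsto F_i+\xi_iE_iK_i^{-1}$ of a distinguished element of $\mathbf{U}$ on which $\tau_{i_k}$ and $T_{i_k}$ intertwine — so that the cancellation is forced by $T_{i_k}\xi\in\mathbf{U}^-$ and needs no by-hand verification. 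Everything else (the transport through $\varphi^-$, the invocation of Lusztig's PBW basis of $\mathbf{U}^-$, and the filtered-algebra bookkeeping) is routine.
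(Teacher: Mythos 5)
The paper gives no proof of this proposition---it is quoted directly from \cite[Theorem~4.4]{XY14}---and your strategy is precisely that of the cited source: your Key Lemma is exactly \cite[Proposition~4.3]{XY14} (which the paper itself invokes in the proof of Theorem~\ref{thm:pbwlyndon}, in the form $F_\beta=f(F_1,\dots,F_n)$ and $B_\beta=f(B_1,\dots,B_n)+g(B_1,\dots,B_n)$ with $f$ homogeneous and $\deg f>\deg g$), and the dimension equality $\dim\mathbf{U}^-_m=\dim\mathrm{gr}_m(\mathbf{U}^\imath)$ underlying your filtered-algebra bookkeeping is recorded in Section~2. The one piece you leave unfinished is the proof of that Key Lemma itself, where you correctly identify the cancellation of excess top-degree terms under $\tau_{i_k}$ as the crux but only sketch a remedy; note also that your parenthetical ``weaker assertion suffices'' is slightly optimistic, since the Poincar\'e-series count alone yields a basis only once either independence or spanning is known, and both still require identifying the leading term of $B_{\beta_j}$ in $\mathrm{gr}(\mathbf{U}^\imath)$ up to a nonzero scalar.
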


Denote by $\mathcal{GL}(\mathbf{U}^+)$ the set of $\mathbf{U}^+$-good Lyndon words. Recall that the map $\psi:\mathcal{GL}(\mathbf{U}^+) \rightarrow\Phi^+$, given by $l\mapsto |l|$, is a bijection (see \cite[Proposition~18]{Lec04}). For each $\beta \in\Phi^+$, we set $l(\beta)=\psi^{-1}(\beta)$. We also set $\mathrm{ht}(\beta)=\sum_{i=1}^n k_i$ if $\beta=\sum_{i=1}^n k_i\alpha_i$. We now discuss the relationship between $\mathbf{B}_\mathcal{L}$ and $\mathbf{B}_{\mathrm{PBW}}$.

Consider the unique $\mathbb{Q}$-algebra anti-automorphism $\omega$ of $T(V)$ defined by $\omega(q)=q^{-1}$ and $\omega(v_{i_1}v_{i_2}\cdots v_{i_k})=v_{i_k}\cdots v_{i_2}v_{i_1}$. Let $\pi^+$ (resp. $\pi^-$) be the algebra homomorphism from $T(V)$ to $\mathbf{U}^+$ (resp. $\mathbf{U}^-$) that maps $v_i$ to $E_i$ (resp. $F_i$) for each $i$. Then $\tau \pi^+=\pi^-\omega$ and $\tau \pi^-=\pi^+\omega$.

For each $\lambda=\sum_{i=1}^nk_i\alpha_i\in \mathbb{N}\Pi$, denote $$\mathcal{N}(\lambda)=\frac{1}{2}\bigg((\lambda,\lambda)-\sum_{i=1}^nk_i(\alpha_i,\alpha_i)\bigg).$$Given $w\in \mathcal{W}$, we denote $\mathcal{N}(w)=\mathcal{N}(|w|)$ for convenience.

\begin{lem}For any $l\in \mathcal{L}$, we have $$\omega([l])=-q^{-\mathcal{N}(l)}[l].$$\end{lem}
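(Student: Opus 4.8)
The plan is to prove the identity $\omega([l]) = -q^{-\mathcal{N}(l)}[l]$ by induction on $\ell(l)$, using the recursive definition of the $q^{-1}$-bracketing $[l] = [[l_1],[l_2]]_{q^{-1}}$ from the co-standard factorization $\mu(l) = (l_1, l_2)$. The base case $\ell(l) = 1$ is immediate: if $l = v_i$ then $\omega(v_i) = v_i$ while $\mathcal{N}(v_i) = \frac{1}{2}((\alpha_i,\alpha_i) - (\alpha_i,\alpha_i)) = 0$, so $\omega([l]) = v_i = -q^0 \cdot v_i$ \emph{except} for the sign — so I should first double-check whether the intended normalization actually carries a sign at length one, or whether the statement is meant for $\ell(l) > 1$ only, or whether $[l]$ should be read with an implicit overall sign convention. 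I expect the cleanest route is to verify the sign bookkeeping carefully on length-$2$ words (e.g. compare with the type $A_2$ and $B_2$ examples already computed in the excerpt, where $[v_1 v_2] = v_1 v_2 - q^{-?} v_2 v_1$) to pin down the exact form of the claim before running the induction.

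For the inductive step, write $\mu(l) = (l_1, l_2)$ so that $[l] = [[l_1],[l_2]]_{q^{-1}} = [l_1][l_2] - q^{-(|l_1|,|l_2|)}[l_2][l_1]$. Applying the anti-automorphism $\omega$ (which reverses products and sends $q \mapsto q^{-1}$) gives $\omega([l]) = \omega([l_2])\,\omega([l_1]) - q^{(|l_1|,|l_2|)}\omega([l_1])\,\omega([l_2])$. By the inductive hypothesis $\omega([l_i]) = -q^{-\mathcal{N}(l_i)}[l_i]$ for $i = 1, 2$, so the two minus signs multiply to a $+$, and one obtains $\omega([l]) = q^{-\mathcal{N}(l_1) - \mathcal{N}(l_2)}\big([l_2][l_1] - q^{(|l_1|,|l_2|)}[l_1][l_2]\big)$. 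The bracket in parentheses should be rewritten, up to a power of $q$, as $-[[l_1],[l_2]]_{q^{-1}} = -[l]$: indeed $[l_2][l_1] - q^{(|l_1|,|l_2|)}[l_1][l_2] = -q^{(|l_1|,|l_2|)}\big([l_1][l_2] - q^{-(|l_1|,|l_2|)}[l_2][l_1]\big) = -q^{(|l_1|,|l_2|)}[l]$. Combining, $\omega([l]) = -q^{-\mathcal{N}(l_1) - \mathcal{N}(l_2) + (|l_1|,|l_2|)}[l]$.

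The whole argument then reduces to the purely combinatorial identity $\mathcal{N}(l) = \mathcal{N}(l_1) + \mathcal{N}(l_2) - (|l_1|, |l_2|)$, equivalently $\mathcal{N}(\lambda + \mu) = \mathcal{N}(\lambda) + \mathcal{N}(\mu) + (\lambda, \mu)$ for $\lambda = |l_1|$, $\mu = |l_2|$ in $\mathbb{N}\Pi$ — wait, the sign: from the definition $\mathcal{N}(\lambda) = \frac{1}{2}\big((\lambda,\lambda) - \sum_i k_i(\alpha_i,\alpha_i)\big)$, expanding $(\lambda+\mu, \lambda+\mu) = (\lambda,\lambda) + 2(\lambda,\mu) + (\mu,\mu)$ and noting the linear correction term is additive, one gets exactly $\mathcal{N}(\lambda+\mu) = \mathcal{N}(\lambda) + \mathcal{N}(\mu) + (\lambda,\mu)$. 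So I would need $(|l_1|,|l_2|)$ to enter with the correct sign; I should reconcile this with the exponent above, which will either confirm the claim as stated or reveal that the exponent is $-\mathcal{N}(l) + 2(|l_1|,|l_2|)$ or similar, forcing a re-examination of the $q^{-1}$ versus $q$ conventions in the bracketing.

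The only real obstacle I anticipate is this sign/exponent bookkeeping — getting the power of $q$ to collapse exactly to $-\mathcal{N}(l)$ and confirming the overall sign is $-1$ rather than $+1$ (the length-one case, where $\mathcal{N} = 0$ and $\omega$ acts trivially, is the place where a stray sign would be exposed, so the statement as written suggests either a convention I am missing or that the lemma is implicitly restricted to $\ell(l) \geq 2$ with the length-one case folded in by declaring $\omega(v_i) = -v_i$ or similar). Everything else — the anti-automorphism property of $\omega$, the additivity of the linear term in $\mathcal{N}$, the recursive structure of $[l]$ — is routine. So my plan is: (i) settle the base case and the precise sign convention; (ii) establish the additivity formula for $\mathcal{N}$; (iii) run the two-line induction above and collect exponents.
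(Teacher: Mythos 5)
Your outline is essentially the paper's own proof: induction on $\ell(l)$ through the co-standard factorization $\mu(l)=(l_1,l_2)$, the anti-automorphism property of $\omega$, and the additivity $\mathcal{N}(l)=\mathcal{N}(l_1)+\mathcal{N}(l_2)+(|l_1|,|l_2|)$. The two points at which you hesitate are exactly the two points at which the statement, read literally against the paper's stated conventions, breaks down, and your diagnosis of both is correct rather than a defect of your argument. For the base case, $\omega(v_i)=v_i$ while $\mathcal{N}(\alpha_i)=0$, so the asserted identity fails by a sign; the paper's proof simply calls this case ``trivial''. For the inductive step, your computation with $[l]=[l_1][l_2]-q^{-(|l_1|,|l_2|)}[l_2][l_1]$ (which is what the definition of the $q^{-1}$-bracketing gives, and what the type $A_2$ example $[v_1v_2]=v_1v_2-q\,v_2v_1$ confirms) produces the exponent $-\mathcal{N}(l_1)-\mathcal{N}(l_2)+(|l_1|,|l_2|)=-\mathcal{N}(l)+2(|l_1|,|l_2|)$, not $-\mathcal{N}(l)$. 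The paper's displayed proof telescopes only because it starts from $[l]=[l_1][l_2]-q^{+(|l_1|,|l_2|)}[l_2][l_1]$, i.e.\ it silently runs the induction for the $q$-bracketing $\{l\}$ rather than for $[l]$ as defined. A direct check on $A_2$ settles the matter: $\omega([v_1v_2])=v_2v_1-q^{-1}v_1v_2=-q^{-1}[v_1v_2]$, whereas $\mathcal{N}(\alpha_1+\alpha_2)=-1$ makes the lemma predict $-q^{+1}$.

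So there is no missing idea in your plan; what remains is the decision you flag in step (i), and the honest resolution is a corrected normalization rather than a hidden convention. Carrying your induction through with the signs tracked (the two minus signs coming from the inductive hypothesis cancel in the term $\omega([l_2])\omega([l_1])$, so the prefactor alternates with length instead of being a constant $-1$) yields
\[
\omega([l])=(-1)^{\ell(l)-1}q^{\mathcal{N}(l)}[l]
\]
for the $q^{-1}$-bracketing, which is consistent with the base case and with the computed $A_2$ and $B_2$ brackets; the version with $q^{-\mathcal{N}(l)}$ holds for $\{l\}$, again with prefactor $(-1)^{\ell(l)-1}$. Write the proof in that corrected form: the structure (base case, additivity of $\mathcal{N}$, the two-line induction collecting exponents) is exactly as you planned and exactly as in the paper, and the downstream use in the PBW comparison only needs proportionality of $\omega([l])$ to $[l]$ by a nonzero scalar, so the correction is harmless there.
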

\begin{proof}
We use induction on $\ell(l)$. If $\ell(l)=1$, the result is trivial. We assume that $\ell(l)>1$ and the result holds for Lyndon words with length $<\ell(l)$. Let $\mu(l)=(l_1,l_2)$ be the co-standard factorization of $l=l_1l_2$. Then \begin{align*}
\omega([l])&=\omega\big([l_1][l_2]-q^{(|l_1|,|l_2|)}[l_2][l_1]\big)\\[3pt]
&=\omega([l_2])\omega([l_1])-q^{-(|l_1|,|l_2|)}\omega([l_1])\omega([l_2])\\[3pt]
&=q^{-\mathcal{N}(l_1)-\mathcal{N}(l_2)}[l_2][l_1]-q^{-(|l_1|,|l_2|)-\mathcal{N}(l_1)-\mathcal{N}(l_2)}[l_1][l_2]\\[3pt]
&=-q^{-(|l_1|,|l_2|)-\mathcal{N}(l_1)-\mathcal{N}(l_2)}\big([l_1][l_2]-q^{(|l_1|,|l_2|)}[l_2][l_1]\big)\\[3pt]
&=-q^{-\mathcal{N}(l)}[l],
\end{align*}where the last equality follows from the identity $(|l_1|,|l_2|)+\mathcal{N}(l_1)+\mathcal{N}(l_2)=\mathcal{N}(l)$.\end{proof}

\begin{thm}\label{thm:pbwlyndon}
  For any positive root $\beta \in \Phi^+$, there exists a polynomial $h_{\beta}$ with $\deg h_{\beta}<\mathrm{ht}(\beta)$ and non-zero $\kappa_\beta \in \mathbb{Q}(q)$ such that
  \[
    B_{\beta} = \kappa_{\beta} B_{[l(\beta)]} + h_{\beta}(B_1, \ldots, B_n).
  \]
\end{thm}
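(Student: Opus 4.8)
The plan is to prove the statement by induction on $\mathrm{ht}(\beta)$, comparing the two filtered/graded structures on $\mathbf{U}^\imath$ through the isomorphism $\varphi^-\colon\mathbf{U}^-\xrightarrow{\sim}\mathrm{gr}(\mathbf{U}^\imath)$. The starting observation is that the analogous statement holds in $\mathbf{U}^-$: by Leclerc \cite{Lec04}, the PBW root vector $F_\beta$ (with the braid action normalized as here, up to the $q\leftrightarrow q^{-1}$ discrepancy noted in the paper and the antiautomorphism $\tau$) is proportional to the bracketed good Lyndon element, i.e. $F_\beta = \kappa'_\beta\,\pi^-(\{l(\beta)\})$ or $\pi^-([l(\beta)])$ for a suitable constant, since the $q$-bracketing and $q^{-1}$-bracketing are interchanged by $\omega$ together with the scalar from the preceding Lemma. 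So first I would pin down precisely, via $\tau$, $\omega$, and the Lemma $\omega([l])=-q^{-\mathcal N(l)}[l]$, the exact constant relating $F_\beta$ (or rather its image under the relevant identification) to $\pi^-([l(\beta)])$; this is where the normalization $\xi_i=-q_i^{-2}$ and the choice of $T_i$ versus Leclerc's $T'_{i,-1}|_{v=q^{-1}}$ get reconciled.

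Next I would transport this to $\mathbf{U}^\imath$. The key structural fact is that $\tau_i$ reduces to $T_i$ "modulo lower order": more precisely, comparing the formulas for $\tau_i(B_j)$ in \cite[Theorem 3.3]{KP11} with those for $T_i(F_j)$ above, every correction term (the $\frac{B_iB_j-q^3B_jB_i}{q[3]!}$ and $B_j$ terms, etc.) has strictly smaller length than the leading Serre-type term, hence lies in $\mathbf{U}^\imath_{m-1}$ when the leading term has length $m$. Therefore, applying $\varphi^-$, one gets that $B_{\beta_j}=\tau_{i_1}\cdots\tau_{i_{j-1}}(B_{i_j})$ has image in $\mathrm{gr}(\mathbf{U}^\imath)$ equal to $(\varphi^-$ of$)$ $F_{\beta_j}$, up to the identification of generators. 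Here I must be careful: $\tau_i$ and $T_i$ do not literally have the same leading terms as written because $T_i(F_j)=\sum(-1)^sq_i^sF_i^{(-a_{ij}-s)}F_jF_i^{(s)}$ while the divided-power Serre relation expresses $F_j^{1-a_{ij}}$-type combinations; one checks degree by degree that $\varphi^-$ intertwines $\tau_i$ with $T_i$ on the associated graded, which is essentially the content of the filtered degeneration $\mathbf{U}^\imath\rightsquigarrow\mathbf{U}^-$ being compatible with braid symmetries — I would either cite this from \cite{XY14} (whose PBW basis is built exactly this way) or verify it directly on the defining relations.

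Combining, in $\mathrm{gr}_m(\mathbf{U}^\imath)$ we get $B_\beta+\mathbf{U}^\imath_{m-1}=\varphi^-(F_\beta)=\kappa_\beta\,\varphi^-(\pi^-([l(\beta)]))=\kappa_\beta\,(B_{[l(\beta)]}+\mathbf{U}^\imath_{m-1})$, where $m=\mathrm{ht}(\beta)=\ell(l(\beta))$ and $B_{[l(\beta)]}$ is shorthand for $\pi^\imath([l(\beta)])$ expanded via Proposition \ref{bracket triangular for words}. Hence $B_\beta-\kappa_\beta B_{[l(\beta)]}\in\mathbf{U}^\imath_{m-1}$, so it is a linear combination of products $B_{i_1}\cdots B_{i_k}$ with $k<m=\mathrm{ht}(\beta)$; writing this as $h_\beta(B_1,\dots,B_n)$ with $\deg h_\beta<\mathrm{ht}(\beta)$ gives the claim. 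The nonvanishing of $\kappa_\beta$ is inherited from the $\mathbf{U}^-$ statement (Leclerc's constant is nonzero) together with $\varphi^-$ being an isomorphism.

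\textbf{Main obstacle.} The delicate point is step two: establishing that $\varphi^-$ genuinely intertwines the KP braid operators $\tau_i$ on $\mathbf{U}^\imath$ with Lusztig's $T_i$ on $\mathbf{U}^-$ at the level of associated graded algebras — i.e. that the "lower-order corrections" in the $\tau_i$-formulas are invisible in $\mathrm{gr}$ and that the leading parts match Lusztig's formulas exactly (including scalars $q_i^s$ versus $q^s$ and the role of divided powers). This requires a careful length-count in each of the four cases $a_{ij}\in\{0,2,-1,-2,-3\}$ and a check that the composite $\tau_{i_1}\cdots\tau_{i_{j-1}}$ descends consistently; equivalently one shows $B_{\beta_j}\in\mathbf{U}^\imath_{\mathrm{ht}(\beta_j)}\setminus\mathbf{U}^\imath_{\mathrm{ht}(\beta_j)-1}$ with prescribed image, which is implicit in \cite{XY14} but should be made explicit. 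Once that compatibility is in hand, everything else is a transcription of Leclerc's $\mathbf{U}^+$ result through $\tau$, $\omega$, and the Lemma above.
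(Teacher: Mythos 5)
Your proposal is correct and follows essentially the same route as the paper: Leclerc's proportionality result transported through $\tau$, $\omega$ and the lemma $\omega([l])=-q^{-\mathcal{N}(l)}[l]$ to obtain $F_\beta=\kappa_\beta F_{[l(\beta)]}$, then comparison of $B_\beta$ with $F_\beta$ via the isomorphism $\varphi^-$ on the associated graded algebra. The ``main obstacle'' you flag is resolved in the paper exactly as you anticipate, by citing \cite[Proposition~4.3]{XY14}, which states directly that $F_\beta=f(F_1,\ldots,F_n)$ and $B_\beta=f(B_1,\ldots,B_n)+g(B_1,\ldots,B_n)$ with $f$ homogeneous and $\deg g<\deg f$, so no operator-level intertwining of $\tau_i$ with $T_i$ is actually needed.
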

\begin{proof}For $1\leq j\leq p$, we write $E_{\beta_j}=T_{i_1}T_{i_2}\cdots T_{i_{j-1}}(E_{i_j})$. Notice that $F_{\beta_j}=\tau(E_{\beta_j})$. It follows from \cite[Theorem~28]{Lec04} that $E_{[l(\beta)]}=\overline{E}_{\{l(\beta)\}}$ is proportional to $\overline{E}_{\beta}$, say $\overline{E}_{\beta}=c_{\beta} E_{[l(\beta)]}$.
Here, $\overline{\ \cdot\ }$ is the bar involution on $\mathbf{U}$, which is the unique $\mathbb{Q}$-algebra automorphism that preserves the generators and maps $q$ to $\overline{q}=q^{-1}$.
Therefore \begin{align*}
F_{\beta}&=\tau\big(c_{\beta} E_{[l(\beta)]}\big)=\overline{c_\beta}\tau\pi^+([l(\beta)])\\
&=\overline{c_\beta}\pi^-\omega([l(\beta)])=\overline{c_\beta}\pi^-\big(q^{-\mathcal{N}(\beta)}[l(\beta)]\big)\\&=\overline{c_\beta}q^{-\mathcal{N}(\beta)}\pi^-\big([l(\beta)]\big)=\overline{c_\beta}q^{-\mathcal{N}(\beta)}F_{[l(\beta)]}\\&
=\kappa_\beta F_{[l(\beta)]},\end{align*}
where $\kappa_\beta=\overline{c_\beta}q^{-\mathcal{N}(\beta)}\neq 0$. Since $\varphi^-$ is an isomorphism of algebras, we have $$\varphi^-(F_{\beta})=\kappa_{\beta}\varphi^-\big( F_{[l(\beta)]}\big)=\kappa_{\beta}B_{[l(\beta)]} +\mathbf{U}^\imath_{\ell(l(\beta))-1}.$$

On the other hand, by \cite[Proposition~4.3]{XY14}, there exist polynomials $f$ and $g$ in $\mathbb{Q}(q)[x_1, \ldots, x_n]$ with $f$ homogeneous and $\deg f > \deg g$ such that
$$F_{\beta}= f(F_1, F_2, \ldots, F_n),$$and $$ B_{\beta}= f(B_1, B_2, \ldots, B_n) + g(B_1, B_2, \ldots, B_n).$$ Then \begin{align*}
\varphi^-(F_{\beta})&=f\big(\varphi^-(F_1), \varphi^-(F_2), \ldots, \varphi^-(F_n)\big)\\&=f(B_1, B_2, \ldots, B_n)+\mathbf{U}^\imath_{\deg f-1}= B_{\beta}+\mathbf{U}^\imath_{\deg f-1}.
\end{align*}
In conclusion, $B_{\beta}-\kappa_{\beta}B_{[l(\beta)]}\in \mathbf{U}^\imath_{\deg f-1}$ as desired.\end{proof}

\subsection{PBW-type bases of $\mathbf{U}^\imath(\mathfrak{sl}_{n+1})$}
In this subsection we keep all conventions used before and assume that $\mathbf{U}^\imath=\mathbf{U}^\imath(\mathfrak{sl}_{n+1})$ is the type $A_n$ split $\imath$quantum group. Rearranging the order of the simple roots if necessary, we may assume that the longest element $w_0$ of $W$ has the following reduced expression
\[
w_0= s_1 s_2 \cdots s_n s_1 s_2 \cdots s_{n-1} \cdots s_1 s_2 s_1.
\]
Sometimes we denote by $s_{i_1}s_{i_2}\cdots s_{i_r}$ a right factor of the above expression with length $r\leq \frac{n(n+1)}{2}$ in the sequel.

Let $C=(c_{ij})$ be the Cartan matrix of type $A_n$. Since $s_i(\alpha_j)=\alpha_j-c_{ij}\alpha_i$, the above reduced expression of $w_0$ provides the following order of positive roots:\[
    \begin{array}{cccc}
        \beta_1 = \alpha_1&< \beta_2 = \alpha_1 + \alpha_2&<\cdots& <\beta_n = \alpha_1 + \alpha_2 + \cdots \alpha_n\\
        &< \beta_{n+1} = \alpha_2&< \cdots& < \beta_{2n-1} = \alpha_2 + \cdots\alpha_n\\
         &\cdots&\cdots&\cdots\\
        & & &< \beta_{\frac{n(n+1)}{2}} = \alpha_n.
    \end{array}
\]By Proposition \ref{Equivalence of two goodnesses} and \cite[8.1]{Lec04},  $$\mathcal{GL}(\mathbf{U}^\imath)=\{v_iv_{i+1}\cdots v_j~|~1\leq i\leq j\leq n\}.$$So the above chain corresponds to that of good Lyndon words with respect to lexicographic order.

Now $\mathbf{U}^\imath$ is generated by $B_1, B_2,\ldots,B_n$ subject to relations
\begin{gather*}
    B_iB_j=B_jB_i,  \text{ if } c_{ij} = 0,\\
    B_i^2B_j - [2] B_iB_jB_i + B_jB_i^2 = -q^{-1} B_j, \text{ if } c_{ij} = -1.
\end{gather*}
The braid group actions $\tau_i$ on $\mathbf{U}^\imath$ are given by:
\[
    \tau_i(B_j) = \begin{cases}
        B_j, &\text{if } c_{ij}=0 \text{ or } 2;\\
        B_iB_j - qB_jB_i,  &\text{if } c_{ij} = -1.
    \end{cases}
\]

We define a bilinear map $[\cdot,\cdot]_{q}:\mathbf{U}^\imath\times \mathbf{U}^\imath\rightarrow \mathbf{U}^\imath$ as follows: for any $P,Q\in \mathbf{U}^\imath$, $$[P, Q]_{q} = PQ - qQP.$$ It is easy to see that this map is well-defined. For $1 \le i < n$, by a direct calculation, we have
\[
    \tau_i(B_{i+1}) = [B_i, B_{i+1}]_{q}, \quad \tau_i\tau_{i+1}(B_i) = B_{i+1}.
\]

\begin{prop}
For any positive root \[
    \beta_r = \alpha_i + \alpha_{i+1} + \cdots + \alpha_{i+k}, \ 1\le i \le i+k \le n,
\]
 we have
    \[
        B_{\beta_r} = \begin{cases}
            [\ldots[[B_i, B_{i+1}]_{q}, B_{i+2}]_q\ldots, B_{i+k}]_q, &\text{ if } k>0,\\
            B_i, &\text{ if } k=0.
        \end{cases}
    \]
\end{prop}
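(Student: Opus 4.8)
The plan is to induct on $k$, the number of simple roots appearing in $\beta_r$. The base case $k=0$ is the definition $B_{\beta_r}=\tau_{i_1}\cdots\tau_{i_{r-1}}(B_i)=B_i$, once one observes that in the chosen reduced expression the letters preceding the slot that produces the root $\alpha_i$ are all $s_j$ with $|i-j|\geq 1$ treated so that $c_{ij}=0$ acts trivially, or more cleanly that $\alpha_i$ occurs as the \emph{last} letter $s_i$ of some right factor, so no $\tau$'s act on it. For the inductive step, suppose $\beta_r=\alpha_i+\alpha_{i+1}+\cdots+\alpha_{i+k}$ with $k>0$. The key structural fact about the reduced expression $w_0=s_1s_2\cdots s_n\,s_1s_2\cdots s_{n-1}\cdots s_1s_2s_1$ is that the positive root $\beta_r=\alpha_i+\cdots+\alpha_{i+k}$ is produced from the root $\alpha_i+\cdots+\alpha_{i+k-1}$ by applying one further $\tau_{i+k}$: more precisely, if $s_{j_1}\cdots s_{j_{r-1}}$ is the left part of the word up to the slot giving $\beta_r$, then $s_{j_1}\cdots s_{j_{r-1}}$ factors as $s_{m_1}\cdots s_{m_{t-1}}$ (the part producing $\beta'=\alpha_i+\cdots+\alpha_{i+k-1}$) times a segment whose net effect, when applied to $B_{i+k}$-related data, is a single $\tau_{i+k}$. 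I would isolate this as a short combinatorial lemma about this particular reduced word; it is the technical heart.

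Granting that, the induction runs as follows. By the inductive hypothesis, $B_{\beta'} = [\ldots[[B_i,B_{i+1}]_q,B_{i+2}]_q\ldots,B_{i+k-1}]_q$. Since the $\tau$'s are algebra automorphisms and $B_{\beta_r}$ is obtained from $B_{\beta'}$ by applying the residual braid operators, I want to show that the net operator sends $B_{\beta'}$ to $[B_{\beta'},B_{i+k}]_q$. Using the identity $\tau_{i+k}\tau_{i+k+1}\cdots$ or, more to the point, using the two boxed formulas $\tau_j(B_{j+1})=[B_j,B_{j+1}]_q$ and $\tau_j\tau_{j+1}(B_j)=B_{j+1}$, one tracks that the composite of $\tau$'s appearing between the slot for $\beta'$ and the slot for $\beta_r$ acts on $B_{i+k}$ (and commutes past everything in $B_{\beta'}$ that involves only $B_i,\ldots,B_{i+k-1}$, since those indices are $\tau_{i+k}$-fixed up to the single bracket) so as to turn $B_{\beta'}$ into $B_{\beta'}B_{i+k}-qB_{i+k}B_{\beta'}$. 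Concretely: one shows $B_{\beta_r}=\tau_{i_1}\cdots\tau_{i_{t-1}}\big(\tau_{i_t}\cdots\tau_{i_{r-1}}(B_{i_r})\big)$ where the inner part equals $[B_{i+k-1}',B_{i+k}]_q$-type expression, then pushes the outer automorphisms through using that they are algebra maps and that $\tau_{i_1}\cdots\tau_{i_{t-1}}(B_{i+k-1})$ already appears correctly inside $B_{\beta'}$ by induction.

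The main obstacle will be the bookkeeping in the combinatorial lemma: precisely identifying, for each $r$, how the left prefix $s_{i_1}\cdots s_{i_{r-1}}$ of the fixed reduced word decomposes so that applying it to $B_{i_r}$ yields exactly the nested $q$-bracket, and verifying that all the ``irrelevant'' $\tau_j$ (those with $c_{i_r,j}=0$ or acting on indices already bracketed) either act as the identity or commute past in the required way. I expect this to reduce, after setting up coordinates $r\leftrightarrow(i,k)$ via the displayed chain of positive roots, to a finite check on type $A_n$ Weyl combinatorics together with repeated use of the two explicit formulas $\tau_i(B_{i+1})=[B_i,B_{i+1}]_q$ and $\tau_i\tau_{i+1}(B_i)=B_{i+1}$, plus the fact that $\tau_i(B_j)=B_j$ when $c_{ij}=0$. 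Everything else is a routine induction once the lemma is in place.
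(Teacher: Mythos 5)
Your overall strategy (induction on $k=\mathrm{ht}(\beta_r)-1$) is genuinely different from the paper's, which inducts on the rank $n$ using the self-similarity $w_0(A_n)=(s_1\cdots s_n)\,w_0(A_{n-1})$ of the chosen reduced word together with the fact that $\tau_1\tau_2\cdots\tau_n$ shifts $B_m\mapsto B_{m+1}$. Your route can be made to work, but as written the proof is not there yet: the ``short combinatorial lemma'' you defer is the entire content of the argument, and several of the assertions you do make about the reduced word are false. Concretely, in $w_0=s_1\cdots s_n\,s_1\cdots s_{n-1}\cdots s_1$ the slot producing $\beta_r=\alpha_i+\cdots+\alpha_{i+k}$ is letter $k+1$ of the $i$-th block, i.e.\ the generator at that slot is $s_{k+1}$, \emph{not} $s_{i+k}$. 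So your base case justification fails: the slot for $\alpha_i$ carries the letter $s_1$, the preceding $\tau$'s act nontrivially, and one needs the identity $\tau_m\tau_{m+1}(B_m)=B_{m+1}$ applied once per completed block to see that they carry $B_1$ to $B_i$. Likewise your claim that the indices occurring in $B_{\beta'}$ are ``$\tau_{i+k}$-fixed'' is wrong ($c_{i+k,i+k-1}=-1$, so $\tau_{i+k}(B_{i+k-1})\neq B_{i+k-1}$); fortunately that claim is not needed if the automorphisms are applied in the correct order.

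The correct way to close your induction is: writing $P=\tau_{i_1}\cdots\tau_{i_{r-2}}$ for the prefix ending just before the slot of $\beta_{r-1}=\alpha_i+\cdots+\alpha_{i+k-1}$ (so $P$ consists of blocks $1,\dots,i-1$ followed by $\tau_1\cdots\tau_{k-1}$), one has
\begin{equation*}
B_{\beta_r}=P\bigl(\tau_k(B_{k+1})\bigr)=P\bigl([B_k,B_{k+1}]_q\bigr)=\bigl[P(B_k),\,P(B_{k+1})\bigr]_q,
\end{equation*}
where $P(B_k)=B_{\beta_{r-1}}$ is the inductive hypothesis, and the missing lemma is precisely $P(B_{k+1})=B_{i+k}$. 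This is true: $\tau_1\cdots\tau_{k-1}$ fixes $B_{k+1}$, and each of the $i-1$ full blocks raises the index by one via $\tau_1\cdots\tau_m\tau_{m+1}(B_m)=B_{m+1}$ (using $i+k\le n$ to stay inside each block). Until you state and prove that lemma — which is where all the work lives — the proposal has a genuine gap, even though the plan is sound and, once completed, gives an alternative to the paper's rank induction.
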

\begin{proof}
    We prove it by induction on the rank $n$.
    The case $A_2$ follows immediately from a simple calculation. Suppose that $n>2$ and the result is true for the type $A_{n-1}$.

    For type $A_n$, consider the prefix $w_{r} = s_{i_1}\cdots s_{i_{r-1}}$ of $w_0$.
    If $r \leq n$, then $$\beta_r = \alpha_1 + \cdots + \alpha_{r-1}=s_1\cdots s_{r-1}(\alpha_r).$$It is easy to check the result for $B_{\beta_r}$ since
    $B_{\beta_r} = \tau_1\cdots \tau_{r-1}(B_r)$.

    If $r>n$, $w_r = s_1 \cdots s_n s_{i_{n+1}}\cdots s_{i_{r-1}}$.
    Note that $s_{i_{n+1}}\cdots s_{i_{r-1}}$ is a prefix of the longest element $$s_1 s_2 \cdots s_{n-1} s_1 s_2 \cdots s_{n-2} \cdots s_1 s_2 s_1$$of the Weyl group of type $A_{n-1}$, and
    \[
        \gamma_r = s_{i_{n+1}}\cdots s_{i_{r-1}}(\alpha_r)
    \]
    is a positive root of type $A_{n-1}$. Suppose
    \[
        \gamma_r = \alpha_{j} + \cdots + \alpha_{j+k},
    \]for some $j,k$. Then by the inductive hypothesis,
    \[
        \tau_{i_{n+1}}\cdots \tau_{i_{r-1}}(B_r) = \begin{cases}
            [\ldots[[B_j, B_{j+1}]_q, B_{j+2}]_q\ldots, B_{j+k}]_q, &\text{ if } k>0\\
            B_j, &\text{ if } k=0.
        \end{cases}
    \]

    Note that for $1 \le k < n$,
    \[
        s_1s_2 \cdots s_n(\alpha_k) = s_1s_2 \cdots s_ks_{k+1}(\alpha_k) = \alpha_{k+1},
    \]
    and
    \[
        \tau_1\tau_2\cdots\tau_n(B_k) = \tau_1\tau_2\cdots\tau_k \tau_{k+1}(B_k) = \tau_1\tau_2\cdots\tau_{k-1}(B_{k+1}) = B_{k+1}.
    \]
 Thus
    \begin{align*}
        \beta_r &= s_1s_2\cdots s_n(\gamma_r) = \alpha_{j+1} + \cdots + \alpha_{j+k+1},\\
        B_{\beta_r} &= \tau_1\tau_2\cdots \tau_n\tau_{i_{n+1}}\cdots \tau_{i_{r-1}}(B_r) \\
        &= \begin{cases}
            [\ldots[[B_{j+1}, B_{j+2}]_q, B_{j+3}]_q\ldots, B_{j+k+1}]_q, &\text{ if } k>0,\\
            B_{j+1}, &\text{ if } k=0.
        \end{cases}
    \end{align*}
\end{proof}

\begin{cor}\label{cor:lynandpbw}
The bases $\mathbf{B}_{\mathcal{L}}$ and $\mathbf{B}_{\mathrm{PBW}}$ coincide for split $\imath$quantum groups $\mathbf{U}^\imath(\mathfrak{sl}_n)$.\end{cor}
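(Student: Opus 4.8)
The plan is to prove that in type $A_n$ each PBW root vector $B_{\beta_r}$ coincides exactly with the bracketed good Lyndon word $B_{[l(\beta_r)]}$; granting this, equality of the two bases follows since both are parametrized by the same data. Recall from Proposition~\ref{Equivalence of two goodnesses} and \cite[8.1]{Lec04} that $\mathcal{GL}(\mathbf{U}^\imath)=\{v_iv_{i+1}\cdots v_j\mid 1\leq i\leq j\leq n\}$ and that $\psi$ sends $v_i\cdots v_{i+k}\mapsto\alpha_i+\cdots+\alpha_{i+k}$; hence $l(\beta_r)=v_iv_{i+1}\cdots v_{i+k}$ whenever $\beta_r=\alpha_i+\cdots+\alpha_{i+k}$.

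The key step is to compute $[l(\beta_r)]$. The longest proper left Lyndon factor of $v_iv_{i+1}\cdots v_{i+k}$ is $v_iv_{i+1}\cdots v_{i+k-1}$, so its co-standard factorization peels off the last letter, $\mu(v_i\cdots v_{i+k})=(v_i\cdots v_{i+k-1},v_{i+k})$; iterating this, one obtains in $T(V)$
\[
[l(\beta_r)]=[\cdots[[v_i,v_{i+1}]_{q^{-1}},v_{i+2}]_{q^{-1}},\cdots,v_{i+k}]_{q^{-1}}.
\]
At the $j$-th step the degree pairing controlling the bracket is $(\,|v_iv_{i+1}\cdots v_{i+j-1}|,\alpha_{i+j}\,)=(\alpha_{i+j-1},\alpha_{i+j})=-1$ in type $A_n$, so each $q^{-1}$-bracket equals $x\,v_{i+j}-q\,v_{i+j}\,x$. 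Applying the homomorphism $\pi^{\imath}$ (which sends $v_a\mapsto B_a$) and using the bilinear map $[\cdot,\cdot]_q$ of this subsection therefore yields
\[
B_{[l(\beta_r)]}=[\cdots[[B_i,B_{i+1}]_q,B_{i+2}]_q,\cdots,B_{i+k}]_q
\]
(with $B_{[v_i]}=B_i$ when $k=0$), which is precisely the formula for $B_{\beta_r}$ given by the Proposition preceding the corollary. Hence $B_{\beta_r}=B_{[l(\beta_r)]}$ for every $r$; in particular this recovers Theorem~\ref{thm:pbwlyndon} for type $A$ with $\kappa_\beta=1$ and $h_\beta=0$.

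To conclude, note that both $\mathbf{B}_{\mathrm{PBW}}$ and $\mathbf{B}_{\mathcal{L}}$ are parametrized by $\mathbb{Z}_{\geq 0}^N$: an exponent tuple $(a_1,\ldots,a_N)$ gives the PBW monomial $B_{\beta_1}^{a_1}\cdots B_{\beta_N}^{a_N}$, while the same tuple, read through $\psi$ as the multiplicities of the good Lyndon words occurring in a $\mathbf{U}^\imath$-good word $g=l_1l_2\cdots l_m$, gives the Lyndon monomial $B_{[l_1][l_2]\cdots[l_m]}$. Because the convex order $\beta_1<\cdots<\beta_N$ coming from the chosen reduced expression matches the lexicographic order on $\mathcal{GL}(\mathbf{U}^\imath)$ (as noted in this subsection), substituting the identity $B_{\beta_r}=B_{[l(\beta_r)]}$ shows that the two monomials attached to a common tuple coincide, so $\mathbf{B}_{\mathrm{PBW}}=\mathbf{B}_{\mathcal{L}}$. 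I expect the main obstacle to lie in the middle step: identifying the co-standard factorization of $v_i\cdots v_{i+k}$ and then observing that in type $A$ every $q^{-1}$-bracket collapses to the plain $q$-bracket built into the $\tau_i$'s, so that the Lyndon and PBW generators become the same iterated $q$-commutator. Once that identity is in place, the comparison of the two bases is routine bookkeeping with multisets of positive roots.
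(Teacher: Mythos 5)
Your proposal is correct and follows essentially the same route as the paper: identify the co-standard factorization of $v_iv_{i+1}\cdots v_{i+k}$ as peeling off the last letter, observe that $q^{-(\alpha_j,\alpha_{j+1})}=q$ in type $A$ so the iterated $q^{-1}$-bracketing becomes the iterated $q$-commutator, and match this with the formula for $B_{\beta_r}$ in the preceding proposition to get $B_{\beta_r}=B_{[l(\beta_r)]}$. Your closing paragraph merely makes explicit the parametrization bookkeeping that the paper leaves implicit.
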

\begin{proof}We adopt all conventions above. For any good Lyndon word $v_iv_{i+1}\cdots v_{i+k}$ with $k\geq 1$, we have \begin{align*}
[v_iv_{i+1}\cdots v_{i+k}]&=[[v_iv_{i+1}\cdots v_{i+k-1}],v_{i+k}]_{q^{-1}}=\cdots\\[3pt]
&= [\ldots[[v_i, v_{i+1}]_{q^{-1}}, v_{i+2}]_{q^{-1}}\ldots, v_{i+k}]_{q^{-1}}.
\end{align*}Note that $q^{-(\alpha_j,\alpha_{j+1})}=q$. So $B_{\beta_r}=B_{[l(\beta_r)]}$ for any positive root $\beta_r$.\end{proof}

\section{Canonical bases of $\imath$quantum groups}

In this section, we denote by $\overline{\ \cdot\ }$ the bar involution on $\mathbf{U}^\imath$ which is the unique $\mathbb{Q}$-algebra automorphism defined on the generators by $\overline{B}_i=B_i$ and $\overline{q}=q^{-1}$. For the existence of $\overline{\ \cdot\ }$, we refer the reader to \cite{BK15}.

\subsection{Integral conditions}

Denote \[
   X= \{(l_1, l_2)~|~ l_1, l_2 \in \mathcal{GL}(\mathbf{U}^\imath),\  l_1<l_2,\  l_1l_2 \notin \mathcal{G}(\mathbf{U}^\imath)\}.
\]

\begin{lem}\label{Non-good factor lemma}Suppose that $l$ is a Lyndon word. If $l$ is not $\mathbf{U}^\imath$-good, then $l$ has a factor of the form $l_1l_2$ for some $(l_1, l_2)\in X $.
\end{lem}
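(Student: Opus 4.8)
The plan is to use the characterization of $\mathbf{U}^\imath$-good Lyndon words together with the combinatorial structure of Lyndon words. Since $l$ is Lyndon but not good, Remark~\ref{Complete list of good Lyndon words} (or equivalently Proposition~\ref{Equivalence of two goodnesses}) tells us that $l$ is not $\mathbf{U}^+$-good either, so $l$ is not in Leclerc's list of good Lyndon words. I would then argue by well-founded induction on $\prec$ (or simply on $\ell(l)$), using the following dichotomy coming from the co-standard factorization: write $\mu(l)=(l_1,l_2)$ with $l_1,l_2\in\mathcal{L}$ and $l_1<l_2$.

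First, consider the case where both $l_1$ and $l_2$ are $\mathbf{U}^\imath$-good. Then $(l_1,l_2)$ is a candidate pair in $X$, and I claim $l_1l_2=l\notin\mathcal{G}(\mathbf{U}^\imath)$ is exactly the hypothesis, so $(l_1,l_2)\in X$ and $l$ itself is the required factor (taking the factorization $x=\varepsilon$, $y=l$, $z=\varepsilon$, which is trivially a factor). Second, suppose one of $l_1,l_2$ is not $\mathbf{U}^\imath$-good, say $l_1$ (the argument for $l_2$ is symmetric); note that each of $l_1,l_2$ is a proper factor of $l$ and is itself a Lyndon word of strictly smaller length. By the induction hypothesis applied to $l_1$, it contains a factor of the form $l_1'l_2'$ with $(l_1',l_2')\in X$; since $l_1$ is a factor of $l$, this factor is also a factor of $l$, and we are done. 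The same reasoning handles the case where $l_2$ is not good.

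The base case is $\ell(l)=1$, where $l=v_i$ is always $\mathbf{U}^\imath$-good (since $B_{v_i}=B_i\notin\mathbf{U}^\imath_0$), so the hypothesis is vacuous; the smallest genuinely non-good Lyndon words have length $\geq 2$, and for those $\mu(l)=(l_1,l_2)$ with $l_1,l_2$ shorter, so the induction proceeds cleanly.

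The main obstacle — or rather the point requiring care — is verifying that the dichotomy is exhaustive: I must confirm that there is no non-good Lyndon word $l$ of length $\geq 2$ for which both $l_1$ and $l_2$ are good yet $l_1l_2$ happens not to be of the form falling into $X$. But this is immediate from the definition of $X$: the only constraints are $l_1,l_2\in\mathcal{GL}(\mathbf{U}^\imath)$, $l_1<l_2$ (which holds since $(l_1,l_2)=\mu(l)$ is a co-standard factorization), and $l_1l_2\notin\mathcal{G}(\mathbf{U}^\imath)$ (which holds since $l_1l_2=l$ is assumed non-good). So no obstruction arises, and the proof is a short structural induction. One should also record that a ``factor of the form $l_1l_2$'' for $(l_1,l_2)\in X$ means literally the subword $l_1l_2$ appearing consecutively inside $l$; in the first case this subword is all of $l$, and in the recursive case it is inherited from the proper Lyndon factor, so transitivity of ``is a factor of'' (Proposition~\ref{Factors of good words}'s underlying combinatorics) closes the argument.
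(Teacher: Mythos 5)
Your proof is correct and follows essentially the same route as the paper's: induction on $\ell(l)$, using the Lyndon factorization $l=l_1l_2$ with $l_1,l_2\in\mathcal{L}$, $l_1<l_2$, then splitting into the case where both factors are good (so $(l_1,l_2)\in X$ and $l$ itself is the required factor) and the case where one factor is non-good (apply the induction hypothesis to that shorter Lyndon factor). The extra care you take in checking that the dichotomy is exhaustive and that factors compose transitively is fine but not needed beyond what the paper already records.
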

\begin{proof}
    We use induction on the length of $l$.
    Since letters are always $\mathbf{U}^\imath$-good, we assume that $\ell(l) \geq 2$. Suppose $l = v_{i}v_{j}$ with $i<j$. If $l$ is not $\mathbf{U}^\imath$-good, itself is such a requested factor.
    Assume that $\ell(l) > 2$ and the result holds for length $< \ell(l)$.
    Since $l$ is Lyndon, we have a decomposition $l_1l_2 = l$ with $l_1, l_2 \in \mathcal{L}$ and $l_1 < l_2$.
    If both of $l_1$ and $l_2$ are $\mathbf{U}^\imath$-good, then $l_1l_2$ is such a not $\mathbf{U}^\imath$-good factor of $l$.
    If one of $l_1, l_2$ is not $\mathbf{U}^\imath$-good, say $l_1$, then by the inductive hypothesis we are done.
\end{proof}

Before stating our next result, we introduce a condition on $\mathbf{U}^\imath$ which enables us to express certain elements in $\mathbf{U}^\imath$ as linear combinations of $\mathbf{B}_{\mathcal{G}}$ with $\Zq$-coefficients. A word $w$ is said to be \emph{integral} with respect to $\mathbf{B}_{\mathcal{G}}$ if $$B_{w}=\sum_{g\in \mathcal{G}(\mathbf{U}^\imath),g\preceq w}c_{g,w}B_g \quad \mbox{with $c_{g,w}\in \Zq$}.$$ We introduce the integral condition (IC) for the $\imath$quantum group $\mathbf{U}^\imath$ as follows.
\begin{itemize}
\item {\bf Condition (IC)}: \quad All words are integral with respect to $\mathbf{B}_{\mathcal{G}}$.
\end{itemize}

\begin{prop}\label{Integral expression}The split $\imath$quantum group $\mathbf{U}^\imath$ satisfies the condition $(\mathrm{IC})$ if and only if, for every $(l_1, l_2)\in X$, $l_1l_2$ is integral with respect to $\mathbf{B}_{\mathcal{G}}$.\end{prop}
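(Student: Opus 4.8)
The forward direction is immediate: if $(\mathrm{IC})$ holds then in particular every word of the form $l_1l_2$ with $(l_1,l_2)\in X$ is integral, since all words are. So the content is the converse, and the plan is to run a double induction along the well order $(\mathcal{W},\prec)$, reducing the integrality of an arbitrary word $w$ to the integrality of the ``minimal obstructions'' $l_1l_2$ with $(l_1,l_2)\in X$. First I would set up the induction: by well-founded induction on $\prec$, assume every word strictly $\prec$-smaller than $w$ is integral with respect to $\mathbf{B}_{\mathcal{G}}$, and prove $w$ is integral. If $w\in\mathcal{G}(\mathbf{U}^\imath)$ there is nothing to prove ($B_w=B_w$), so assume $w\notin\mathcal{G}(\mathbf{U}^\imath)$.

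The key structural input is Proposition~\ref{Decomposition of good Lyndon words} together with Remark~\ref{Description of good words} and Lemma~\ref{Non-good factor lemma}. Write $w=l_1'l_2'\cdots l_s'$ as its Lyndon factorization with $l_1'\geq\cdots\geq l_s'$. Since $w\notin\mathcal{G}(\mathbf{U}^\imath)$, by Remark~\ref{Description of good words} this factorization fails to be a good-word decomposition, so either some $l_t'$ is not $\mathbf{U}^\imath$-good, or two consecutive good Lyndon factors are in the ``wrong'' relation (i.e.\ there is an adjacent pair $l_t'\geq l_{t+1}'$ both good but with $l_t'l_{t+1}'\notin\mathcal{G}(\mathbf{U}^\imath)$); in the non-Lyndon-adjacency case one can always exhibit inside $w$ a factor $l_1l_2$ with $l_1<l_2$, $l_1,l_2\in\mathcal{GL}(\mathbf{U}^\imath)$ and $l_1l_2\notin\mathcal{G}(\mathbf{U}^\imath)$, i.e.\ an $(l_1,l_2)\in X$ — and in the case some $l_t'$ is not good, Lemma~\ref{Non-good factor lemma} produces such a factor inside $l_t'$, hence inside $w$. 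So in all cases $w=xyz$ where $y=l_1l_2$ for some $(l_1,l_2)\in X$ (allowing $x$ or $z$ empty). By hypothesis $B_y=\sum_{g\preceq y}c_{g,y}B_g$ with $c_{g,y}\in\Zq$, hence $B_w=B_xB_yB_z=\sum_{g\preceq y}c_{g,y}B_{xgz}$. By Proposition~\ref{Basic facts for LR order}(1), each word $xgz$ satisfies $xgz\prec xyz=w$ (strictly, since $g\preceq y$ and, when $g=y$ would not occur as $y\notin\mathcal{G}(\mathbf{U}^\imath)$, so in fact $g\prec y$). Therefore each $B_{xgz}$ is, by the induction hypothesis, an $\Zq$-linear combination of $\mathbf{B}_{\mathcal{G}}$, and assembling these gives $B_w=\sum_{g\in\mathcal{G}(\mathbf{U}^\imath)}c_{g,w}B_g$ with $c_{g,w}\in\Zq$. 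The support condition $g\preceq w$ follows from the remark after Theorem~\ref{Good words bases for iquantum} (the expansion of any $B_w$ in $\mathbf{B}_{\mathcal{G}}$ is supported on $g\preceq w$), so $w$ is integral, completing the induction.

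The step I expect to be the main obstacle is the case analysis extracting a factor $l_1l_2$ with $(l_1,l_2)\in X$ from a word $w$ whose Lyndon factorization has all good factors but is not itself a good-word decomposition: one must be careful that when $l_t',l_{t+1}'$ are consecutive good Lyndon words with $l_t'\geq l_{t+1}'$ but $l_t'l_{t+1}'$ fails to be good, the pair is genuinely of the type recorded in $X$ (where the defining condition is $l_1<l_2$, not $l_1\geq l_2$). This needs the observation that if $l_t'\geq l_{t+1}'$ then the concatenation $l_t'l_{t+1}'$, while not Lyndon, nonetheless contains — as a genuine factor — a word of the form $l_1l_2$ with $l_1,l_2$ good Lyndon and $l_1<l_2$ whenever $l_t'l_{t+1}'$ is not good; concretely, if $l_t'>l_{t+1}'$ then any good-word-decomposition obstruction must already lie inside a strictly shorter factor and one invokes Lemma~\ref{Non-good factor lemma} and Proposition~\ref{Factors of good words} again, while the truly new phenomenon ($l_1<l_2$ adjacent) is exactly the case $l_t'\leq l_{t+1}'$ forced by Lyndon concatenation. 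I would handle this by always passing, via Proposition~\ref{Basic facts of Lyndon words}(1), to the Lyndon factor structure and noting that $l_1l_2\notin\mathcal{G}(\mathbf{U}^\imath)$ with $l_1<l_2$ good Lyndon is precisely membership in $X$. The remaining steps — the triangularity from Proposition~\ref{Basic facts for LR order}, the finiteness of $\{x : x\prec w\}$ guaranteeing the induction terminates, and the $\Zq$-bookkeeping — are routine.
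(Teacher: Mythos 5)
Your proof is correct and follows essentially the same route as the paper's: well-founded induction on $(\mathcal{W},\prec)$, extraction of a factor $l_1l_2$ with $(l_1,l_2)\in X$ via Lemma~\ref{Non-good factor lemma}, substitution of the integral expansion of $B_{l_1l_2}$, and descent via Proposition~\ref{Basic facts for LR order}. The one point worth noting is that the case you single out as the main obstacle is vacuous: by Remark~\ref{Description of good words} together with the uniqueness in Proposition~\ref{Basic facts of Lyndon words}(2), a word is $\mathbf{U}^\imath$-good if and only if every factor in its (unique, non-increasing) Lyndon factorization is a good Lyndon word, so $w\notin\mathcal{G}(\mathbf{U}^\imath)$ already forces some Lyndon factor $l_t'$ to be non-good and Lemma~\ref{Non-good factor lemma} applies directly --- there is no separate ``adjacent pair with all factors good'' case, which is fortunate since your sketch of that case was not a complete argument.
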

\begin{proof}Suppose that $l_1l_2$ is integral with respect to $\mathbf{B}_{\mathcal{G}}$ for every $(l_1, l_2)\in X$. We use well-founded induction on $(\mathcal{W},\prec)$. If $w$ is a letter, then $B_w$ is $\mathbf{U}^\imath$-good. There is nothing to prove. We assume that $\ell(w)\geq 2$ and every word $\prec w$ is integral with respect to $\mathbf{B}_{\mathcal{G}}$. Write $w=l_1l_2\cdots l_k$ with $l_1,l_2,\dots,l_k\in \mathcal{L}$ and $l_1\geq l_2\geq \cdots\geq l_k$. If each $l_i$ is $\mathbf{U}^\imath$-good, then so is $w$. We are done. If not, say $l_j$ is not $\mathbf{U}^\imath$-good, then by Lemma \ref{Non-good factor lemma}, there exist $x,y\in \mathcal{W}^\ast$ and $(l_j',l_j'')\in X$ such that $l_j=xl_j'l_j''y$. By our assumption, $$B_{l_j'l_j''}=\sum_{g\in \mathcal{G}(\mathbf{U}^\imath),g\prec l_j'l_j''}c_{g,l_j'l_j''}B_g,$$ for some $c_{g,l_j'l_j''}\in \Zq$. Thus $$B_w=\sum_{g\in \mathcal{G}(\mathbf{U}^\imath),g\prec l_j'l_j''}c_{g,l_j'l_j''}B_{l_1\cdots l_{j-1}xgyl_{j+1}\cdots l_k}.$$By Proposition \ref{Basic facts for LR order}, each summand on the right hand side is strictly smaller than $w$ with respect to the order $\prec$. By the inductive hypothesis, all of them are integral with respect to $\mathbf{B}_{\mathcal{G}}$, and hence $w$ is integral.

The reverse statement is trivial.\end{proof}

\begin{rem}The condition (IC) is essential for our proof of existence of canonical bases below. In order to verify it, we just need to consider the set $X$ by the above proposition. Since the set of good Lyndon words is finite, it can be checked case by case.\end{rem}

\begin{example}[Type $A_2$]\label{A2}Set $\xi_i = q_i^{-1}$ in the definition of $B_i$ for $i=1,2$. Then the type $A_2$ split $\imath$quantum group $\mathbf{U}^\imath(\mathfrak{sl}_3)$ is generated by $B_1, B_2$ subject to relations
\begin{gather*}
    B_1^2B_2 - [2] B_1B_2B_1 + B_2B_1^2 = B_2, \\
    B_2^2B_1 - [2] B_2B_1B_2 + B_1B_2^2 = B_1.
\end{gather*}
By Lemma \ref{Complete list of good Lyndon words} and \cite[8.1]{Lec04}, the good Lyndon words are $v_1, v_1v_2, v_2$. So all good words are of the form $$v_2^{k_1}(v_1v_2)^{k_2}v_1^{k_3}\quad \mbox{with $k_1,k_2,k_3\in \mathbb{Z}_{\geq 0}$}.$$ Thus $$X=\{(v_1,v_1v_2),(v_1v_2,v_2)\}.$$Note that $$ B_{v_1v_1v_{2}} = -B_{v_{2}v_1^2} + [2]B_{(v_1v_{2})v_1} + B_{v_{2}}$$ and
        $$B_{v_1v_{2}v_{2}} = -B_{v_{2}^2v_1} + [2]B_{v_{2}(v_1v_{2})} + B_{v_1}.$$Hence, the split $\imath$quantum group $\mathbf{U}^\imath(\mathfrak{sl}_3)$ satisfies the condition (IC).

In fact, we will show in the next subsection that this keeps true for all $\mathbf{U}^\imath(\mathfrak{sl}_n)$.\end{example}

\begin{example}[Type $B_2$]Set $\xi_i = q_i^{-1}$ again. The type $B_2$ split $\imath$quantum group $\mathbf{U}^\imath(\mathfrak{so}_5)$ is generated by $B_1, B_2$ subject to relations
\begin{gather*}
    B_1^2B_2 - [2]_{q^2} B_1B_2B_1 + B_2B_1^2 = B_2, \\
    B_2^3B_1 - [3] B_2^2B_1B_2 + [3]B_2B_1B_2^2 - B_1B_2^3 = [2]^2(B_2B_1 - B_1B_2).
\end{gather*}Here $[2]_{q^2}=\frac{q^4-q^{-4}}{q^2-q^{-2}}$. By Lemma \ref{Complete list of good Lyndon words} and \cite[8.2]{Lec04}, the good Lyndon words are $v_1, v_1v_2, v_1v_2v_2, v_2$. Here we adopt the usual convention of Dynkin diagram of $B_2$ whose arrow is opposite to that in \cite[8.2]{Lec04}. So all good words are of the form $$v_2^{k_1}(v_1v_2 v_2)^{k_2}(v_1 v_2)^{k_3}v_1^{k_4} \quad\mbox{with $k_1,k_2,k_3,k_4\in \mathbb{Z}_{\geq 0}$}.$$ Thus $$X=\{(v_1,v_1v_2),(v_1,v_1v_2 v_2),(v_1v_2,v_1v_2v_2),(v_1v_2v_2,v_2)\}.$$A direct computation shows that\begin{align*}
B_{v_1v_1v_2}&= [2]_{q^2} B_{(v_1v_2)v_1} - B_{v_2v_1^2} + B_2,\\
B_{v_1v_1v_2 v_2}&=[2]_{q^2} B_{(v_1v_2)^2}-[2]_{q^2} B_{v_2(v_1v_2)v_1}+B_{v_2^2v_1^2},\\
B_{v_1v_2v_1v_2v_2}&=-2B_{v_2^3v_1^2}+([3]+[2]_{q^2})B_{v_2^2(v_1v_2) v_1}-[3]B_{v_2(v_1v_2v_2 )v_1}\\
&\ \ \ +[3]B_{(v_1v_2v_2)(v_1v_2)}-[2]_{q^2}B_{v_2(v_1v_2)^2}-[2]_{q^2}[2]^2B_{(v_1v_2)v_1}\\
&\ \ \ +2[2]^2B_{v_2v_1^2}+B_{v_2^3}-[2]^2B_{v_2},\\
B_{v_1v_2v_2,v_2}&=B_{v_2^3v_1}-[3]B_{v_2^2(v_1v_2)}+[3]B_{v_2(v_1v_2v_2)}-[2]^2B_{v_2v_1}+[2]^2B_{v_1v_2}.
\end{align*}So the split $\imath$quantum group $\mathbf{U}^\imath(\mathfrak{so}_5)$ satisfies the condition $(\mathrm{IC})$ as well.\end{example}

\begin{example}[Type $G_2$]\label{ex:G2}
This is a counterexample for the condition $(\mathrm{IC})$. Set $\xi_i = q_i^{-1}$.
The split $\imath$quantum group of type $G_2$ is generated by $B_1, B_2$ subject to the relations
\[
    B_2^2B_1 - [2]_{q^3} B_2B_1B_2 + B_1B_2^2 = B_1,
\]
and
\begin{align*}
   \lefteqn{B_1^4B_2 - [4] B_1^3B_2B_1 + \qbi B_1^2B_2B_1^2 - [4] B_1B_2B_1^3 + B_2B_1^4}\\
    &=  ([3]^2 + 1)(B_1^2B_2 + B_2B_1^2) - [2]([2][4] + q^{-2} + q^2)B_1B_2B_1 - [3]^2 B_2.
\end{align*}Here $[2]_{q^3}=\frac{q^6-q^{-6}}{q^3-q^{-3}}$. According to Lemma \ref{Complete list of good Lyndon words} and \cite[5.5.4]{Lec04}, the good Lyndon words are
\[
    v_1, v_1v_1v_1v_2, v_1v_1v_2, v_1v_1v_2v_1v_2, v_1v_2, v_2.
\]Consider the pair $(v_1v_1v_1v_2,v_1v_2)$. Observe that $(v_1v_1v_1v_2,v_1v_2)\in X$ and \begin{align*}
   \lefteqn{B_{v_1v_1v_1v_2v_1v_2}}\\
    &=  B_{v_2(v_1v_2)v_1^3} + (q^2-1+q^{-2})B_{ (v_1v_2)^2v_1^2}-(q^2-1+q^{-2})B_{(v_1v_1v_2v_1v_2)v_1}\\
    &\ \ \ + \frac{1}{[2]} \qbi B_{(v_1v_1v_2)^2} - (q^2+q^{-2})B_{(v_1v_2)(v_1v_1v_1v_2)}\\
    &\ \ \ + (q^2+q^{-2})B_{ v_2(v_1v_1v_1v_2)v_1} - \frac{1}{[2]}\qbi B_{v_2(v_1v_1v_2)v_1^2}\\
&\ \ \   + (-q^6+q^2+1+q^{-2}-q^{-6})B_{(v_1v_2)^2} + (q^6-q^2-1-q^{-2}+q^{-6})B_{v_2(v_1v_2)v_1}.
\end{align*}The number $\frac{1}{[2]}\qbi=\frac{(q^2+1+q^{-2})(q^2+q^{-2})}{q+q^{-1}}$ does not belong to $\Zq$. So the case of type $G_2$ does not satisfy the condition $(\mathrm{IC})$.\end{example}


\begin{lem}\label{Transition lemma}If $\mathbf{U}^\imath$ satisfies the condition $(\mathrm{IC})$, then for any $ g \in \mathcal{G}(\mathbf{U}^\imath)$, $$B_g = B_{[g]} + \sum_{w \in \mathcal{G}(\mathbf{U}^\imath), w \prec g}r_{w, g}B_{[w]}$$ where $r_{w, g} \in \Zq$.
\end{lem}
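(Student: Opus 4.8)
The plan is to prove the identity by well-founded induction on $(\mathcal{W},\prec)$, starting from the triangular expansion of the $q^{-1}$-bracketing. By Lemma~\ref{bracket triangular}, if $g\in\mathcal{GL}(\mathbf{U}^\imath)$ then $[g]=g+\sum_{\ell(w)=\ell(g),\,w>g}b_{w,g}w$ with $b_{w,g}\in\Zq$, and more generally Proposition~\ref{bracket triangular for words} gives the same shape for an arbitrary good word $g=l_1l_2\cdots l_m$. Applying $\pi^\imath$ (i.e.\ passing to $B$'s) we get
\[
B_{[g]}=B_g+\sum_{\ell(w)=\ell(g),\,w>g}b_{w,g}B_w,
\]
so that $B_g=B_{[g]}-\sum_{\ell(w)=\ell(g),\,w>g}b_{w,g}B_w$. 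Now $\ell(w)=\ell(g)$ and $w>g$ together mean exactly $w\prec g$, so each $B_w$ appearing here is indexed by a word strictly smaller than $g$ in the well-order $\prec$.

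Next I would invoke Condition~(IC): each such $w$ is integral with respect to $\mathbf{B}_{\mathcal{G}}$, hence $B_w=\sum_{h\in\mathcal{G}(\mathbf{U}^\imath),\,h\preceq w}c_{h,w}B_h$ with $c_{h,w}\in\Zq$. For $h\prec w\prec g$ we would like to rewrite $B_h$ in terms of $B_{[h]}$'s; but for $h=w$ itself this is the very statement we are proving for a $\prec$-smaller word, so the inductive hypothesis applies only when $h$ is \emph{strictly} below $g$, which is the case here since $w\prec g$. Thus by induction each $B_h$ (for $h\preceq w\prec g$) equals $B_{[h]}+\sum_{h'\in\mathcal{G}(\mathbf{U}^\imath),\,h'\prec h}r_{h',h}B_{[h']}$ with $\Zq$-coefficients. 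Substituting everything back, $B_g=B_{[g]}+(\text{$\Zq$-combination of }B_{[w]}\text{ with }w\prec g)$, which is the claim, with the leading coefficient of $B_{[g]}$ equal to $1$ because the $b_{w,g}$ correction terms only contribute $B_{[w]}$'s with $w\prec g$.

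One technical point to handle cleanly: the induction must be set up so that the base case (letters) is trivial, where $g=v_i$, $[g]=g$, and $B_g=B_{[g]}$ with no correction terms. A second point is bookkeeping of the index sets: I should note that $\mathcal{G}(\mathbf{U}^\imath)$ is closed enough (Remark~\ref{Description of good words}) that the good words $h$ produced at each stage are themselves admissible indices, and that composing the finitely many $\Zq$-linear substitutions keeps coefficients in $\Zq$ and keeps all indices $\prec g$. The main obstacle—really the only substantive input—is Condition~(IC): without it, the expansion of $B_w$ into $\mathbf{B}_{\mathcal{G}}$ would only have $\mathbb{Q}(q)$-coefficients (as in the Remark after Theorem~\ref{Good words bases for iquantum}), and the $\Zq$-integrality of the $r_{w,g}$ would fail, exactly as the type $G_2$ computation in Example~\ref{ex:G2} shows. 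Everything else is a routine finite descent in the well-order $\prec$.
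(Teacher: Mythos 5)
Your proof is correct and follows essentially the same route as the paper: both start from the triangular expansion $[g]=g+\sum_{w>g,\ \ell(w)=\ell(g)}b_{w,g}w$, use Condition (IC) to re-expand each $B_w$ in $\mathbf{B}_{\mathcal{G}}$ with $\Zq$-coefficients, and then descend; the paper merely packages your well-founded induction as the observation that the transition matrix from $\mathbf{B}_{\mathcal{G}}$ to $\mathbf{B}_{\mathcal{L}}$ is upper unitriangular with $\Zq$-entries and hence has an inverse of the same form. Your explicit citation of Proposition~\ref{bracket triangular for words} for a general good word (rather than only the Lyndon case of Lemma~\ref{bracket triangular}) is in fact the more precise reference.
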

\begin{proof}By Lemma \ref{bracket triangular},
    \[
        [g] = g + \sum_{w > g, \ell(w) = \ell(g)} b_{w, g} w,
    \]with $ b_{w, g}\in \Zq$. It follows from Proposition \ref{Integral expression} that \[B_{[g]}=B_g+\sum_{w > g, \ell(w) = \ell(g)} b_{w, g} B_w=B_g+\sum_{w > g, \ell(w) = \ell(g)} \sum_{u\in \mathcal{G}(\mathbf{U}^\imath),u\preceq w}b_{w, g}c_{u,w}B_u,\]where $c_{u,w}\in \Zq$. As a consequence, the transition matrix from the basis $\mathbf{B}_\mathcal{G}$ to $\mathbf{B}_\mathcal{L}$ is upper unitriangular and each entry is a number in $\Zq$. Then its inverse matrix is upper unitriangular and has $\Zq$-entries as well, as desired.\end{proof}

\begin{lem}\label{Comparison lemma}If $w,x,y,z\in \mathcal{W}$ satisfy $w\prec x$ and $y\prec z$, then all of the following words $$wy,wz,xy$$ are strictly smaller than $xz$ with respect to the order $\prec$.

Moreover, if $x,z\in \mathcal{L}$ and $x<z$, then all of the following words $$yx, yw, zw $$ are strictly smaller than $xz$ with respect to $\prec$.\end{lem}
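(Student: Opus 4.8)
The plan is to prove the two chains of inequalities by reducing everything to the two cases in the definition of $\prec$: comparing lengths, and comparing lexicographically among words of equal length. Throughout I will use the basic order facts from Proposition \ref{Basic facts for LR order}.

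First I would handle the first chain $wy, wz, xy \prec xz$. Since $w \prec x$ means either $\ell(w) < \ell(x)$ or ($\ell(w) = \ell(x)$ and $w > x$), and similarly for $y \prec z$, I split into cases. If $\ell(w) < \ell(x)$, then regardless of $y, z$ we get $\ell(wy) = \ell(w) + \ell(y) < \ell(x) + \ell(z)$ only when $\ell(y) \le \ell(z)$, which does hold since $y \prec z$ forces $\ell(y) \le \ell(z)$; so $wy \prec xz$, and likewise $wz \prec xz$. The symmetric argument using $\ell(y) < \ell(z)$ (when it holds) disposes of $xy \prec xz$ and $wy \prec xz$. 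The only remaining subcase is $\ell(w) = \ell(x)$ with $w > x$ and $\ell(y) = \ell(z)$ with $y > z$; here all six words in sight have equal length, so I just need the lexicographic comparisons $wy, wz, xy > xz$. Since $w > x$ with equal lengths, Proposition \ref{Basic facts for LR order}(2) gives $wy > xz$ (and $wz > xz$), hence $wy, wz \prec xz$; and from $y > z$ with $x$ a common prefix, $xy > xz$, hence $xy \prec xz$.

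Next I would handle the second chain $yx, yw, zw \prec xz$ under the extra hypothesis $x, z \in \mathcal{L}$ with $x < z$ (note these are the stronger \emph{lexicographic} relations, not the $\prec$-relations). The key point is that since $x, z$ are Lyndon and $x < z$, we cannot have $z = x z'$ as a prefix — a Lyndon word is not a proper prefix of a smaller Lyndon word — actually more carefully: $x<z$ for Lyndon words with $x$ not a prefix of $z$ gives a genuine letter-disagreement, so I may assume (or prove via the Lyndon property) that $x$ and $z$ first differ in some position with $x$ having the smaller letter, OR $x$ is a proper prefix of $z$. In either situation $\ell(x) \le \ell(z)$. Then for lengths: $\ell(yx) = \ell(y) + \ell(x)$ versus $\ell(xz) = \ell(x) + \ell(z)$, so $yx$ is shorter when $\ell(y) < \ell(z)$, i.e. when $y \prec z$ via length. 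When $\ell(y) = \ell(z)$ (and $y > z$), all of $yx, xz$ have length $\ell(y)+\ell(x) = \ell(z)+\ell(x)$ wait — I need $\ell(x)$ on both sides, which holds, and then since $y > z$ with equal length, $yx > zx$; comparing $zx$ with $xz$ uses $x < z$ Lyndon to conclude... this is the delicate comparison. Similarly $yw, zw$: lengths $\ell(y)+\ell(w)$ and $\ell(z)+\ell(w)$ versus $\ell(x)+\ell(z)$; using $w \prec x$ so $\ell(w) \le \ell(x)$, and $y \prec z$ so $\ell(y)\le \ell(z)$, gives $\ell(yw) \le \ell(x) + \ell(z) = \ell(xz)$ with a genuine strict inequality unless both $\ell(w)=\ell(x)$ and $\ell(y)=\ell(z)$, in which case the equal-length lexicographic comparison $yw > zw > \cdots$ and the Lyndon comparison of $zw$ against $xz$ finishes it.

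The main obstacle I expect is precisely the equal-length lexicographic step comparing a word of the form (something)$\cdot x$ or $z \cdot w$ against $x \cdot z$ when $x, z$ are Lyndon with $x < z$: one must show $zx > xz$ and similar, which is exactly where the Lyndon hypothesis is indispensable (it fails for general words — e.g. $x = v_1 v_2$, $z = v_2 v_1$ are not both Lyndon). The clean way is: if $x$ is a proper prefix of $z$ then $xz$ has $z$ after the prefix $x$ but $zx$ has the letter of $z$ at position $\ell(x)+1$ which, by the Lyndon property of $z$, exceeds the corresponding letter of $z$ itself continued — more simply, $z > x$ and Lyndon-ness ensures $zx$ agrees with $xz$ on the prefix $x$ and then $z \cdots > z' \cdots$ where $z = xz'$ and $z' > z$ since $z$ is Lyndon, giving $zx > xz$; if instead $x, z$ disagree at some position $\le \min(\ell(x),\ell(z))$ with $x$ smaller there, then that same position already forces $xz < zx$ directly. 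I would write this as a short standalone sub-argument, then feed it into both chains. Everything else is bookkeeping with Proposition \ref{Basic facts for LR order}.
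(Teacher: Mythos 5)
Your proposal is correct, and the first half (the chain $wy,wz,xy\prec xz$) is essentially identical to the paper's argument: reduce to length comparisons, and in the equal-length case invoke Proposition~\ref{Basic facts for LR order}. For the second half you arrive at the same chains the paper uses ($yx\prec zx\prec xz$, $yw\prec zw\prec zx\prec xz$, $zw\prec zx\prec xz$), but you establish the pivotal inequality $zx\prec xz$ differently. The paper's route is shorter: since $x<z$ are both Lyndon, Proposition~\ref{Basic facts of Lyndon words}(1) gives $xz\in\mathcal{L}$, so $xz<z$ because $z$ is a proper suffix of the Lyndon word $xz$, and $z<zx$ by the prefix rule; hence $xz<z<zx$, and equal lengths convert this to $zx\prec xz$ in one line. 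Your route instead splits into the case where $x$ and $z$ disagree at some letter (immediate) and the case $z=xz'$, where you use that the proper suffix $z'$ of the Lyndon word $z$ satisfies $z'>z$, and then that this inequality must come from a genuine letter disagreement (since $z'$ is shorter than $z$, the prefix rule would give the wrong direction), so it persists after appending $x$. That argument is sound, but it only exploits Lyndon-ness of $z$ and costs you the careful prefix analysis; the paper's observation that the \emph{concatenation} $xz$ is Lyndon buys the whole comparison at once and is the cleaner way to write it.
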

\begin{proof}If $\ell(w)<\ell(x)$ or $\ell(y)<\ell(z)$, then both of $\ell(wy)$ is strictly smaller that $\ell(xz)$, and hence $wy\prec xz$. Assume that $\ell(w)=\ell(x)$ and $\ell(y)=\ell(z)$. By (2) of Proposition \ref{Basic facts for LR order}, we have $wy\prec xz$.

On the other hand, if $\ell(w)<\ell(x)$, then $\ell(wz)<\ell(xz)$. Thus $wz\prec xz$. If $\ell(w)=\ell(x)$, then by Proposition \ref{Basic facts for LR order} again, $wz\prec xz$. The case of $xy$ follows from (1) of Proposition \ref{Basic facts for LR order} immediately.

If $x,z\in \mathcal{L}$ and $x<z$, then $xz\in\mathcal{L}$ by Proposition \ref{Basic facts of Lyndon words}. As a consequence, $$xz<z<zx.$$ By Proposition \ref{Basic facts for LR order}, $$yx\prec zx\prec xz,$$ $$yw\prec zw \prec zx\prec xz,$$and $$zw\prec zx\prec xz.$$\end{proof}

\begin{prop}\label{Inductive property}Suppose that $x,y\in \mathcal{W}$ and $l_1,l_2\in \mathcal{L}$ satisfy $x\prec l_1$, $y\prec l_2$ and $l_1<l_2$. If $\mathbf{U}^\imath$ satisfies the condition $(\mathrm{IC})$, then all of the elements $$B_{[x]}B_{[y]},B_{[y]}B_{[x]}, B_{[x]}B_{[l_2]}, B_{[l_2]}B_{[x]},B_{[l_1]}B_{[y]}, B_{[y]}B_{[l_1]}$$ are in the space spanned by the set $\{B_{[g]}|g\in\mathcal{G}(\mathbf{U}^\imath), g\prec l_1l_2\}$ over $\Zq$.\end{prop}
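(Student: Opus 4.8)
The plan is to reduce all six products to a single uniform argument. Each of them has the form $B_{[a]}B_{[b]}$ for a pair $(a,b)$ drawn from $\{(x,y),(y,x),(x,l_2),(l_2,x),(l_1,y),(y,l_1)\}$, and since the assignment $w\mapsto B_w$ is induced by the algebra homomorphism $\pi^\imath:T(V)\to\mathbf{U}^\imath$, we have $B_{[a]}B_{[b]}=B_{[a][b]}$, where $[a][b]$ denotes the product of the two bracketings inside $T(V)$. So it suffices to expand $[a][b]$ in the word basis of $T(V)$, control which words appear, and then transport the expansion into $\mathbf{U}^\imath$.

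First I would apply Proposition~\ref{bracket triangular for words} to write $[a]=a+\sum_{a'\prec a,\ \ell(a')=\ell(a)}b_{a',a}\,a'$ and likewise $[b]=b+\sum_{b'\prec b,\ \ell(b')=\ell(b)}b_{b',b}\,b'$, with all coefficients in $\Zq$ (when $a$ or $b$ is Lyndon this is just Lemma~\ref{bracket triangular}). Multiplying these out, $[a][b]$ becomes a $\Zq$-linear combination of words $a'b'$ with $a'\preceq a$ and $b'\preceq b$ (and $\ell(a')=\ell(a)$, $\ell(b')=\ell(b)$), and Proposition~\ref{Basic facts for LR order}(1) then gives $a'b'\preceq ab$ for each of these.

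The main point, and the step I expect to require care, is to verify that $ab\prec l_1l_2$ for every one of the six choices of $(a,b)$. For $(a,b)\in\{(x,y),(x,l_2),(l_1,y)\}$ this is exactly the first assertion of Lemma~\ref{Comparison lemma}, applied with the first-slot comparison $x\prec l_1$ and the second-slot comparison $y\prec l_2$ (matching $ab$ to $wy$, $wz$, $xy$ respectively). For $(a,b)\in\{(y,x),(l_2,x),(y,l_1)\}$ it is the ``moreover'' part of that lemma, which applies precisely because $l_1,l_2\in\mathcal{L}$ and $l_1<l_2$ (matching $ab$ to $yw$, $zw$, $yx$ respectively). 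Consequently every word $W$ occurring in $[a][b]$ satisfies $W\preceq ab\prec l_1l_2$.

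To conclude, I would invoke Condition~(IC): each such $W$ yields $B_W=\sum_{g\in\mathcal{G}(\mathbf{U}^\imath),\ g\preceq W}c_{g,W}B_g$ with $c_{g,W}\in\Zq$, and $g\preceq W\prec l_1l_2$ forces $g\prec l_1l_2$; hence $B_{[a]}B_{[b]}$ is a $\Zq$-combination of $B_g$ with $g\in\mathcal{G}(\mathbf{U}^\imath)$, $g\prec l_1l_2$. Finally Lemma~\ref{Transition lemma} (the place where (IC) is genuinely used) rewrites each such $B_g$ as a $\Zq$-combination of $B_{[w]}$ with $w\in\mathcal{G}(\mathbf{U}^\imath)$ and $w\preceq g\prec l_1l_2$. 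Composing these two steps puts $B_{[a]}B_{[b]}$ in the $\Zq$-span of $\{B_{[w]}\mid w\in\mathcal{G}(\mathbf{U}^\imath),\ w\prec l_1l_2\}$, as required. Apart from the case-check in the third paragraph, the only thing to watch is that the coefficient ring stays $\Zq$ throughout — which it does, by Proposition~\ref{bracket triangular for words}, Condition~(IC), and Lemma~\ref{Transition lemma}.
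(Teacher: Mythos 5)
Your proof is correct and follows essentially the same route as the paper's: expand $[a][b]$ via Proposition~\ref{bracket triangular for words}, bound every resulting word below $l_1l_2$ using Lemma~\ref{Comparison lemma}, then pass through Condition~(IC) and Lemma~\ref{Transition lemma}. The only difference is that you carry out all six cases uniformly where the paper treats one and says the rest are similar.
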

\begin{proof}By Proposition \ref{bracket triangular for words}, we have $$[x]=x+\sum_{u>x,\ell(u)=\ell(x)}b_{u,x}u \quad \mbox{and} \quad [y]=y+\sum_{w>y,\ell(w)=\ell(y)}b_{w,y}w,$$
where $b_{u,x},b_{w,y}\in \Zq$. Then \begin{align*}
B_{[x]}B_{[y]}&=B_{xy}+\sum_{u>x,\ell(u)=\ell(x)}b_{u,x}B_{uy}+\sum_{w>y,\ell(w)=\ell(y)}b_{w,y}B_{xw}\\
&\ \ \ +\sum_{u>x,\ell(u)=\ell(x)}\sum_{w>y,\ell(w)=\ell(y)}b_{u,x}b_{w,y}B_{u w}\\
&=\sum_{z\prec l_1l_2}a_{z,l_1l_2}B_z,
\end{align*}where $a_{z,l_1l_2}\in\Zq$. Here the last equality follows from the above lemma.

By Proposition \ref{Integral expression} and Lemma \ref{Transition lemma}, we have \begin{align*}
B_{[x][y]}&=\sum_{z\prec l_1l_2}a_{z,l_1l_2}\sum_{g\in \mathcal{G}(\mathbf{U}^\imath),g\preceq z }c_{g,z}B_{g}\\
&=\sum_{z\prec l_1l_2}\sum_{g\in \mathcal{G}(\mathbf{U}^\imath),g\preceq z }a_{z,l_1l_2}c_{g,z}\bigg(B_{[g]} + \sum_{g' \in \mathcal{G}(\mathbf{U}^\imath), g' \prec g}r_{g', g}B_{[g']}\bigg)\\
&\in \mathrm{Span}_{\Zq}\{B_{[g]}|g\in\mathcal{G}(\mathbf{U}^\imath), g\prec l_1l_2\}.
\end{align*}
The other cases can be proven similarly.
\end{proof}

\begin{thm}If $\mathbf{U}^\imath$ satisfies the condition $(\mathrm{IC})$, then for every $l \in \mathcal{L}$, we have $$\overline{B_{[l]}} = B_{[l]} + \sum_{g \prec l,}h_{g, l} B_{[g]},$$ where $h_{g, l} \in \Zq$.
\end{thm}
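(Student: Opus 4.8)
The plan is to prove this by well-founded induction on $(\mathcal{L}, \prec)$, mimicking the classical argument for PBW-type bases but carefully tracking integrality. First I would observe that $\overline{B_{[l]}}$ can be computed directly from the definition of the $q^{-1}$-bracketing: since the bar involution fixes each $B_i$ and sends $q \mapsto q^{-1}$, applying it to $[l] = [[l_1],[l_2]]_{q^{-1}}$ (with $\mu(l) = (l_1,l_2)$) turns the $q^{-1}$-commutator into a $q$-commutator, so $\overline{B_{[l]}} = B_{\{l\}}$ (the $q$-bracketing applied to $B$'s), together with recursively bar-applied pieces. More precisely, an easy induction on $\ell(l)$ shows that $\overline{B_{[l]}}$ is a $\Zq$-linear combination of products $B_{[l_{i_1}]}B_{[l_{i_2}]}\cdots$ coming from expanding $B_{\{l\}}$ in terms of $q^{-1}$-brackets; the base case $\ell(l)=1$ is trivial since $\overline{B_i} = B_i$.

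The key step is then to show that $\overline{B_{[l]}} - B_{[l]}$ lies in $\mathrm{Span}_{\Zq}\{B_{[g]} \mid g \in \mathcal{G}(\mathbf{U}^\imath),\ g \prec l\}$. By Lemma~\ref{bracket triangular}, $[l] = l + \sum_{w > l, \ell(w) = \ell(l)} b_{w,l}\, w$ with $b_{w,l} \in \Zq$, so $\{l\}$ and $[l]$ have the same leading word $l$ and differ only in words strictly $\prec l$ (shorter or longer-but-lexicographically-larger-of-equal-length, hence $\prec l$ by the same-length comparison). Therefore $B_{\{l\}} - B_{[l]}$ is a $\Zq$-combination of $B_w$ with $w \prec l$, and by Condition (IC) together with Lemma~\ref{Transition lemma} each such $B_w$ is a $\Zq$-combination of $B_{[g]}$ with $g \preceq w \prec l$. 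The remaining contributions to $\overline{B_{[l]}}$ beyond $B_{\{l\}}$ — namely the cross terms from expanding the bar of the nested commutators — are handled by the inductive hypothesis applied to the proper Lyndon factors $l_1, l_2$ of $l$, combined with Proposition~\ref{Inductive property}: products of the form $B_{[x]}B_{[y]}$ with $x \prec l_1$, $y \prec l_2$ (and the reversed/mixed versions) land in $\mathrm{Span}_{\Zq}\{B_{[g]} \mid g \prec l_1 l_2\}$, and since $\mu(l) = (l_1,l_2)$ one has $l = l_1 l_2$, so $g \prec l$.

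The main obstacle I anticipate is bookkeeping the expansion of $\overline{B_{[l]}}$ cleanly enough that every term is visibly either $B_{\{l\}}$ (handled by (IC) and Lemma~\ref{Transition lemma}) or a product covered by Proposition~\ref{Inductive property} with arguments strictly below $l_1, l_2$ in the $\prec$ order. Concretely, one writes $\overline{B_{[l]}} = \overline{B_{[l_1]}}\,\overline{B_{[l_2]}} - q^{-(|l_1|,|l_2|)} \overline{B_{[l_2]}}\,\overline{B_{[l_1]}}$, substitutes the inductive expressions $\overline{B_{[l_i]}} = B_{[l_i]} + \sum_{g \prec l_i} h_{g,l_i} B_{[g]}$, and expands. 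The leading contribution $B_{[l_1]}B_{[l_2]} - q^{-(|l_1|,|l_2|)} B_{[l_2]}B_{[l_1]}$ differs from $[B_{[l_1]},B_{[l_2]}]_{q^{-1}} = B_{[l]}$ only by the term $(q^{-(|l_1|,|l_2|)} - q^{(|l_1|,|l_2|)}) B_{[l_2]}B_{[l_1]}$; rewriting $B_{[l_2]}B_{[l_1]} = B_{[l_2][l_1]} = B_{l_2 l_1} + (\text{words} \prec l_2 l_1 \prec l)$ via Proposition~\ref{bracket triangular for words} and Lemma~\ref{Comparison lemma} shows this lies in $\mathrm{Span}_{\Zq}\{B_{[g]} \mid g \prec l\}$ after applying (IC) and Lemma~\ref{Transition lemma}. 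All the cross terms have at least one factor $B_{[g]}$ with $g \prec l_i$, so Proposition~\ref{Inductive property} disposes of them. Finally one checks that $h_{g,l} \in \Zq$ throughout, which holds because every operation invoked — Lemma~\ref{bracket triangular}, Proposition~\ref{bracket triangular for words}, Condition (IC), Lemma~\ref{Transition lemma}, and Proposition~\ref{Inductive property} — preserves $\Zq$-coefficients.
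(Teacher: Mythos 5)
Your proposal is correct, but it buries the actual proof inside a lot of machinery that turns out to be unnecessary, and the paper's own argument is exactly the ``key step'' of your second paragraph with everything else stripped away. The decisive observation is that the bar involution fixes every monomial $B_w$ (since $\overline{B_i}=B_i$), so applying it to $B_{[l]}=B_l+\sum_{w>l,\,\ell(w)=\ell(l)}b_{w,l}B_w$ (Lemma~\ref{bracket triangular}) merely conjugates the coefficients: $\overline{B_{[l]}}=B_l+\sum \overline{b_{w,l}}\,B_w=B_{[l]}+\sum(\overline{b_{w,l}}-b_{w,l})B_w$, and each $B_w$ with $w\prec l$ is converted into a $\Zq$-combination of $B_{[g]}$, $g\preceq w\prec l$, by Condition (IC) and Lemma~\ref{Transition lemma}. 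That is the whole proof --- no induction on $l$, no co-standard factorization, no appeal to Proposition~\ref{Inductive property}. In particular your worry about ``remaining contributions to $\overline{B_{[l]}}$ beyond $B_{\{l\}}$'' is vacuous: since coefficient-conjugation is a ring homomorphism of $T(V)$ and $\overline{[l_i]}=\{l_i\}$ recursively, one has $\overline{B_{[l]}}=B_{\{l\}}$ on the nose, so there are no cross terms to dispose of, and the inductive superstructure of your first and third paragraphs (which is essentially the argument the paper reserves for the subsequent corollary on arbitrary good words $g=g_1\cdots g_k$) does no harm but adds nothing. Two small slips worth noting: the non-leading words of $[l]$ all have length exactly $\ell(l)$ (there are no ``shorter'' words, contrary to your parenthetical --- they are $\prec l$ purely because they are lexicographically larger of equal length), and after applying the bar the coefficient in $\overline{B_{[l]}}=\overline{B_{[l_1]}}\,\overline{B_{[l_2]}}-q^{(|l_1|,|l_2|)}\overline{B_{[l_2]}}\,\overline{B_{[l_1]}}$ should carry $q^{+(|l_1|,|l_2|)}$, not $q^{-(|l_1|,|l_2|)}$; your later computation of the discrepancy $(q^{-(|l_1|,|l_2|)}-q^{(|l_1|,|l_2|)})B_{[l_2]}B_{[l_1]}$ is consistent with the corrected sign, so this does not affect the argument.
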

\begin{proof}By Proposition \ref{bracket triangular for words}, $$[l]=l+\sum_{w>l,\ell(w)=\ell(l)}b_{w,l}w,$$for some $b_{w,l}\in \Zq$. Thus, by Proposition \ref{Integral expression} and Lemma \ref{Transition lemma}, we have\begin{align*}
\overline{B_{[l]}}&=\overline{B_l}+\sum_{w>l,\ell(w)=\ell(l)}\overline{b_{w,l}}\ \overline{B_w}\\
&=B_l+\sum_{w>l,\ell(w)=\ell(l)}\overline{b_{w,l}}B_w\\
&=B_{[l]}-\sum_{w>l,\ell(w)=\ell(l)}b_{w,l}B_w+\sum_{w>l,\ell(w)=\ell(l)}\overline{b_{w,l}}B_w\\
&=B_{[l]}+ \sum_{w>l,\ell(w)=\ell(l)}(\overline{b_{w,l}}-b_{w,l})\sum_{u \in \mathcal{G}(\mathbf{U}^\imath), u \preceq w}c_{g, w}B_{u}\\
&=B_{[l]}+ \sum_{w>l,\ell(w)=\ell(l)}\sum_{u \in \mathcal{G}(\mathbf{U}^\imath), u \preceq w}(\overline{b_{w,l}}-b_{w,l})c_{g, w}\sum_{g \in \mathcal{G}(\mathbf{U}^\imath), g \preceq u}r_{g, u}B_{[g]}\\
&= B_{[l]} + \sum_{g \prec l,}h_{g, l} B_{[g]}.\end{align*}
 Here all coefficients are in $\Zq$.
\end{proof}

\begin{cor}If $\mathbf{U}^\imath$ satisfies the condition $(\mathrm{IC})$, then for any good word $g$, we have $$\overline{B_{[g]}}=B_{[g]}+\sum_{w \in \mathcal{G}(\mathbf{U}^\imath), w \prec g}h_{w,g}B_{[w]},$$where $h_{w,g}\in \Zq$.\end{cor}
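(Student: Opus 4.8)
The plan is to derive the corollary directly from the preceding theorem by using the unique decomposition of a good word into good Lyndon factors together with the multiplicativity of the $q^{-1}$-bracketing. By Proposition~\ref{Decomposition of good Lyndon words}, write $g=l_1l_2\cdots l_m$ with $l_1,\ldots,l_m\in\mathcal{GL}(\mathbf{U}^\imath)$ and $l_1\geq l_2\geq\cdots\geq l_m$, so that by definition $B_{[g]}=B_{[l_1]}B_{[l_2]}\cdots B_{[l_m]}$. Since the bar involution is a $\mathbb{Q}$-algebra automorphism of $\mathbf{U}^\imath$, we have $\overline{B_{[g]}}=\overline{B_{[l_1]}}\ \overline{B_{[l_2]}}\cdots\overline{B_{[l_m]}}$, and the previous theorem applies to each factor $l_j\in\mathcal{L}$ to give $\overline{B_{[l_j]}}=B_{[l_j]}+\sum_{g_j\prec l_j}h_{g_j,l_j}B_{[g_j]}$ with coefficients in $\Zq$.

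Next I would expand the product of these $m$ expressions. The leading term is $B_{[l_1]}\cdots B_{[l_m]}=B_{[g]}$; every other term is a product of the form $B_{[g_1]}B_{[g_2]}\cdots B_{[g_m]}$ in which at least one $g_j$ satisfies $g_j\prec l_j$ (and the rest satisfy $g_j\preceq l_j$). The key point is that such a product lies in $\mathrm{Span}_{\Zq}\{B_{[w]}\mid w\in\mathcal{G}(\mathbf{U}^\imath),\ w\prec g\}$. To see this I would argue by a repeated application of Proposition~\ref{Inductive property} (and Lemma~\ref{Transition lemma}): reassociating the product two factors at a time, Proposition~\ref{Inductive property} rewrites each pairwise product $B_{[x]}B_{[y]}$ (with $x\preceq l_j$, $y\preceq l_{j+1}$, strict in at least one case) as a $\Zq$-combination of $B_{[g']}$ with $g'\prec l_jl_{j+1}$, and the order relations of Lemma~\ref{Comparison lemma} guarantee that recombining these intermediate good words with the remaining $[l_k]$'s keeps everything strictly below $l_1\cdots l_m=g$ in the order $\prec$. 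Thus the entire error term is a $\Zq$-linear combination of $B_{[w]}$ with $w\prec g$, which is exactly the claimed formula.

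The main obstacle I anticipate is making the induction-over-factors argument fully precise, in particular verifying that after one pairwise reduction the resulting good words can be fed back into Proposition~\ref{Inductive property} with the hypotheses $x\prec l_1$, $y\prec l_2$, $l_1<l_2$ genuinely satisfied — one must track carefully that the intermediate good words produced are indeed $\prec$ the relevant Lyndon word and that the remaining factors of $g$ are handled by the "moreover" clauses of Lemma~\ref{Comparison lemma} concerning left multiplication by Lyndon words. Since $g$ is good and hence its Lyndon decomposition is weakly decreasing, the pairs arising are of the controlled type covered by Proposition~\ref{Inductive property}, so no genuinely new estimate is needed; it is purely a matter of bookkeeping with the order $\prec$. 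Once that is in place the corollary follows immediately, with all coefficients in $\Zq$ as all the invoked results ((IC), Lemma~\ref{Transition lemma}, Proposition~\ref{Inductive property}, and the theorem) produce $\Zq$-coefficients.
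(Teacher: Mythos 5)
Your proposal is correct and follows essentially the same route as the paper: decompose $g$ into its weakly decreasing good Lyndon factors, apply the preceding theorem to each $\overline{B_{[l_j]}}$ via multiplicativity of the bar involution, and absorb the cross terms using Proposition~\ref{Inductive property}. The paper merely packages the ``repeated pairwise reduction'' you describe as an induction on the number of Lyndon factors (writing $g=g_1g'$ and applying the inductive hypothesis to the good word $g'$), and the order-bookkeeping you flag as the main obstacle is treated just as informally there.
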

\begin{proof}Write $g=g_1g_2\cdots g_k$ with $g_1,g_2,\ldots, g_k\in \mathcal{GL}(\mathbf{U}^\imath)$ and $g_1\geq g_2\geq \cdots \geq g_k$. We use induction on $k$. When $k=1$, it is a direct consequence of the above theorem. Assume that $k\geq 2$ and the result holds for $k-1$. We set $g'=g_2g_3\cdots g_k$. Then $g'$ is a good word and $g=g_1g'$. By inductive hypothesis, $$\overline{B_{[g']}}=B_{[g']}+\sum_{w \in \mathcal{G}(\mathbf{U}^\imath), w \prec g'}h_{w,g'}B_{[w]},$$where $h_{w,g'}\in \Zq$. Thus\begin{align*}
\overline{B_{[g]}}&=\overline{B_{[g_1]}}\ \overline{B_{[g']}}\\[3pt]
&=\bigg(B_{[g_1]}+\sum_{x \in \mathcal{G}(\mathbf{U}^\imath), x \prec g_1}h_{x,g_1}B_{[x]}\bigg)\bigg(B_{[g']}+\sum_{y \in \mathcal{G}(\mathbf{U}^\imath), y \prec g'}h_{y,g'}B_{[y]}\bigg)\\[3pt]
&=B_{[g]}+\sum_{x \in \mathcal{G}(\mathbf{U}^\imath), x \prec g_1}h_{x,g_1}B_{[x]}B_{[g']}+\sum_{y \in \mathcal{G}(\mathbf{U}^\imath), y \prec g'}h_{y,g'}B_{[g_1]}B_{[y]}\\[3pt]
&\ \ \ +\sum_{x \in \mathcal{G}(\mathbf{U}^\imath), x \prec g_1}\sum_{y \in \mathcal{G}(\mathbf{U}^\imath), y \prec g'}h_{x,g_1}h_{y,g'}B_{[x]}B_{[y]}.
\end{align*}By Proposition \ref{Inductive property}, we get the desired result.\end{proof}

\begin{thm}\label{Canonical bases}Suppose that $\mathbf{U}^\imath$ admits the condition $(\mathrm{IC})$. Then there exists a unique basis (called the canonical basis) $\mathbf{B}_{can}=\{C_g~|~g\in \mathcal{G}(\mathbf{U}^\imath)\}$ of $\mathbf{U}^\imath$ such that, for each $g\in \mathcal{G}(\mathbf{U}^\imath)$, \begin{itemize}
\item[(1)] $\overline{C_g}=C_g$;
\item[(2)] $C_g=B_{[g]}+\sum_{w\in \mathcal{G}(\mathbf{U}^\imath),w\prec g}p_{g,w}B_{[w]}$, where $p_{g,w}\in q\mathbb{Z}[q]$.
  \end{itemize}\end{thm}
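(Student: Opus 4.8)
The plan is to mimic the classical construction of canonical (Kazhdan--Lusztig-type) bases from the triangular data produced in the preceding lemmas, using the partial order $\prec$ on $\mathcal{G}(\mathbf{U}^\imath)$ as the indexing poset. The key input is the previous Corollary, which says that the bar involution acts on the Lyndon basis $\mathbf{B}_\mathcal{L}$ in a $\prec$-upper-unitriangular way with $\Zq$-coefficients: $\overline{B_{[g]}} = B_{[g]} + \sum_{w\prec g} h_{w,g} B_{[w]}$ with $h_{w,g}\in\Zq$. Since $(\mathcal{W},\prec)$ is well ordered and each element has only finitely many predecessors, and since the bar involution is an involution, these transition coefficients satisfy the usual cocycle-type compatibility. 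This is exactly the situation of Lusztig's lemma on the existence and uniqueness of bar-invariant elements with prescribed leading term and ``small'' lower-order coefficients.

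First I would record the involution identity: applying $\overline{\phantom{x}}$ twice to $\overline{B_{[g]}} = \sum_{w\preceq g} h_{w,g} B_{[w]}$ (with $h_{g,g}=1$) and comparing coefficients gives $\sum_{w'} \overline{h_{w,w'}}\, h_{w',g} = \delta_{w,g}$ for all $w\preceq g$; in particular $\overline{h_{w,g}} + h_{w,g} = \sum_{w\prec w'\prec g}(\text{lower stuff})$, which is the standard input needed to solve for the correction terms. Next I would construct $C_g$ by well-founded induction on $g$ with respect to $\prec$: for $g$ minimal set $C_g = B_{[g]}$; in general set $C_g = B_{[g]} + \sum_{w\prec g} p_{g,w} B_{[w]}$ and determine the $p_{g,w}\in q\mathbb{Z}[q]$ recursively so that $\overline{C_g}=C_g$. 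Expanding $\overline{C_g}$ using the Corollary and the already-constructed $\overline{C_w}=C_w$ for $w\prec g$, bar-invariance becomes, for each $w\prec g$, an equation of the form $\overline{p_{g,w}} - p_{g,w} = (\text{known element of }\Zq\text{ built from } h\text{'s and earlier } p\text{'s})$. The right-hand side is anti-invariant under $q\mapsto q^{-1}$ (this is forced by the involution identity above together with the inductive hypothesis), and any element $f\in\Zq$ with $\overline f = -f$ can be written uniquely as $p - \overline p$ with $p\in q\mathbb{Z}[q]$; this determines $p_{g,w}$ uniquely and shows $p_{g,w}\in q\mathbb{Z}[q]$. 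Uniqueness of the whole basis follows by the same induction: if $C_g$ and $C_g'$ both satisfy (1) and (2), then $C_g - C_g' = \sum_{w\prec g}(p_{g,w}-p'_{g,w})B_{[w]}$ is bar-invariant with all coefficients in $q\mathbb{Z}[q]$, and a bar-invariant element of $q\mathbb{Z}[q]\oplus\cdots$ of this shape must vanish (again by descending through $\prec$). Finally, $\mathbf{B}_{can}$ is a basis because the transition matrix from $\mathbf{B}_\mathcal{L}$ is $\prec$-unitriangular, and $\mathbf{B}_\mathcal{L}$ is a basis by Theorem~\ref{lyndonbasis}.

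The main obstacle — and the place where the condition (IC) is genuinely used — is guaranteeing that the recursively defined right-hand sides lie in $\Zq$ rather than merely in $\mathbb{Q}(q)$, so that the ``$f = p - \overline p$ with $p\in q\mathbb{Z}[q]$'' step is available; this is precisely what the Corollary (which rests on Lemma~\ref{Transition lemma}, Proposition~\ref{Integral expression}, and hence (IC)) supplies, and Example~\ref{ex:G2} shows it can fail without (IC). A secondary technical point is bookkeeping the finiteness: for fixed $\mathbb{N}\Pi$-degree the set $\{w : w\preceq g\}$ is finite, so all sums are finite and the well-founded induction on $(\mathcal{W},\prec)$ terminates; one should also note that $C_g$ lies in the same $\mathbb{N}\Pi$-weight space as $B_{[g]}$, since every $B_{[w]}$ appearing has $|w|=|g|$, which keeps everything finite-dimensional weight-space by weight-space.
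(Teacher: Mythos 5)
Your proposal is correct and follows essentially the same route as the paper: the paper simply invokes the corollary on upper-unitriangularity of the bar involution on $\mathbf{B}_{\mathcal{L}}$ together with the standard procedure for producing canonical bases (citing \cite[Theorem~0.28]{DDPW08}), which is exactly the Lusztig-type induction you spell out. The only slip is your closing remark that every $B_{[w]}$ with $w\prec g$ has $|w|=|g|$ --- this fails in $\mathbf{U}^\imath$, which is only filtered rather than $\mathbb{N}\Pi$-graded, but it is harmless since finiteness of $\{w: w\prec g\}$ already follows from the definition of $\prec$.
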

\begin{proof}It is a direct consequence of the above corollary and the standard procedure for producing canonical bases (see, e.g., \cite[Theorem~0.28]{DDPW08}).\end{proof}

\begin{example}[Canonical basis of type $A_2$]The first few elements of canonical basis of $\mathbf{U}^\imath(\mathfrak{sl}_3)$ are\begin{align*}
    C_{v_1} &= B_{[v_1]}=  B_{v_1}, \\
    C_{v_2v_1} &= B_{[v_2v_1]}=B_{v_2v_1},\\
    C_{v_1v_2} &= B_{[v_1v_2]}+qB_{[v_2v_1]}= B_{ v_1v_2}, \\
    C_{v_2} &= B_{[v_2]}= B_{v_2}.
\end{align*}\end{example}

\begin{example}[Canonical basis of type $B_2$]We adopt the convention in Example \ref{Lyndon basis of B2} for simple roots. The first few elements of canonical basis of $\mathbf{U}^\imath(\mathfrak{so}_5)$ are\begin{align*}
    C_{v_1} &= B_{[v_1]}= B_{v_1}, \\
    C_{v_2} &= B_{[v_2]}=B_{v_2},\\
    C_{v_2v_1} &= B_{[v_2v_1]}= B_{v_2v_1}, \\
    C_{v_1v_2} &= B_{[v_1v_2]}+q^2B_{[v_2v_1]}=B_{v_1v_2}, \\
    C_{v_2v_2v_1} &= B_{[v_2v_2v_1]}= B_{v_2v_2v_1},\\
    C_{v_2v_1v_2} &= B_{ [v_2v_1v_2]}+q^2B_{[v_2v_2v_1]}= B_{ v_2v_1v_2},\\
    C_{v_1v_2v_2} &=B_{ [v_1v_2v_2]}+q^2B_{ [v_2v_1v_2]}+(q^4-q^2)B_{[v_2v_2v_1]}\\
    &=B_{ v_1v_2v_2} -B_{v_2v_1v_2}.
\end{align*}\end{example}

\begin{rem}\label{rem:can}
(1) Our canonical bases of split $\imath$quantum groups are not special cases of the ``dual $\imath$canonical bases'' of universal $\imath$quantum groups established by Lu and Wang in \cite{LW21b}. The discrepancy arises due to different selections of the PBW type bases serving as the standard bases.

Moreover, our canonical bases and Lu-Wang's ``dual $\imath$canonical bases'' are both completely different from the $\imath$canonical bases of modified $\imath$quantum groups investigated by Bao and Wang in \cite{BW18b}. The connection among these three canonical bases remains unclear.

(2) Since the order $\prec$ can also be used in the half quantum group $\mathbf{U}^+$, our discussion is still valid for $\mathbf{U}^+$. In this case, we just ignore the shorter-length terms in the formulas because $\mathbf{U}^+$ is a graded algebra, and the results in this section hold as well.

\end{rem}

\subsection{Canonical bases of $\mathbf{U}^\imath(\mathfrak{sl}_{n+1})$}
In this subsection, we assume that $n\geq 3$ and fix $\xi_i = q_i^{-1}$ in the definition of $B_i$ for all $i$.

The split $\imath$quantum group $\mathbf{U}^\imath=\mathbf{U}^\imath(\mathfrak{sl}_{n+1})$ of type $A_n$ is generated by $B_1, B_2,\ldots,B_n$ subject to relations
\begin{gather*}
B_iB_j=B_jB_i,  \text{ if }|i-j|\geq 2,\\
    B_i^2B_{i+1} - [2] B_iB_{i+1}B_i + B_{i+1}B_i^2 = B_{i+1}, \\
    B_{i+1}^2B_i - [2] B_{i+1}B_iB_{i+1} + B_iB_{i+1}^2 = B_i,
\end{gather*}for $1\leq i<n$.

By Proposition \ref{Equivalence of two goodnesses} and \cite[8.1]{Lec04}, we have $$\mathcal{GL}(\mathbf{U}^\imath)=\{v_iv_{i+1}\cdots v_j|1\leq i\leq j\leq n\}.$$Recall that $$\mathcal{G}(\mathbf{U}^\imath)=\{l_1l_2\cdots l_m|l_1,\ldots,l_m\in \mathcal{GL}(\mathbf{U}^{\imath})\text{ and }l_1\geq \cdots\geq l_m\}.$$

\begin{lem}\label{Good An}For any $i\geq 1$, each of the words
    $$v_iv_iv_{i+1}, v_iv_{i+1}v_{i+1}, v_iv_{i+1}v_{i+2}v_{i+1}, v_iv_{i+1}v_{i}v_{i+1}v_{i+2}$$ is integral with respect to $\mathbf{B}_{\mathcal{G}}$.
\end{lem}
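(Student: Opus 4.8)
The plan is to prove the four integrality statements one at a time, using the defining relations of $\mathbf{U}^\imath(\mathfrak{sl}_{n+1})$ to rewrite each of the listed words as a linear combination of good words, and then checking that all coefficients that appear lie in $\Zq$. Since the words in question only involve the letters $v_i, v_{i+1}, v_{i+2}$, and since every factor of a $\mathbf{U}^\imath$-good word is $\mathbf{U}^\imath$-good (Proposition~\ref{Factors of good words}), the relevant good words of a given $\mathbb{N}\Pi$-degree come only from Lyndon factors built on those three letters; this keeps the bookkeeping finite and explicit. By Proposition~\ref{Integral expression} it would in fact suffice to treat only those words $l_1 l_2$ with $(l_1,l_2) \in X$, but it is cleaner here to just verify the four asserted words directly, as is done in the type $A_2$ and $B_2$ examples above.

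First I would handle $v_i v_i v_{i+1}$ and $v_i v_{i+1} v_{i+1}$. These are immediate from the two Serre-type relations
\[
B_i^2 B_{i+1} - [2] B_i B_{i+1} B_i + B_{i+1} B_i^2 = B_{i+1}, \qquad
B_{i+1}^2 B_i - [2] B_{i+1} B_i B_{i+1} + B_i B_{i+1}^2 = B_i,
\]
exactly as in Example~\ref{A2}: one solves for $B_{v_i v_i v_{i+1}}$ (resp.\ $B_{v_i v_{i+1} v_{i+1}}$), obtaining $B_{v_i v_i v_{i+1}} = -B_{v_{i+1} v_i^2} + [2] B_{(v_i v_{i+1}) v_i} + B_{v_{i+1}}$ and similarly for the other, and notes that $v_{i+1} v_i^2$, $(v_i v_{i+1}) v_i$, $v_{i+1}$ are all good (products of good Lyndon words in decreasing order) and that $[2], 1 \in \Zq$. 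One should also check $\preceq$-domination of the right-hand terms, which follows from length considerations and Proposition~\ref{Basic facts for LR order}.

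Next I would treat $v_i v_{i+1} v_{i+2} v_{i+1}$. Here the degree is $\alpha_i + 2\alpha_{i+1} + \alpha_{i+2}$, so the relevant good Lyndon words are $v_{i+1}$, $v_{i+1}v_{i+2}$, $v_i v_{i+1}$, $v_i v_{i+1} v_{i+2}$, $v_i v_{i+1} v_{i+2} v_{i+1}$ (if good), and combinations thereof; using the commutation $B_{i+2} B_i = B_i B_{i+2}$ and the Serre relation for the pair $(i+1, i+2)$ one can straighten $B_{v_i v_{i+1} v_{i+2} v_{i+1}}$ into a $\Zq$-combination of good words. Finally, for $v_i v_{i+1} v_i v_{i+1} v_{i+2}$ I would first move $B_{i+2}$ past one $B_{i+1}$ using nothing (they don't commute), so instead apply the Serre relation for $(i+1,i+2)$ at the tail $B_{i+1} B_{i+1} B_{i+2}$ — wait, the tail is $B_i B_{i+1} B_{i+2}$, so one uses $|i - (i+2)| \geq 2$ to commute $B_i$ and $B_{i+2}$, reducing to words of shorter "disorder", then iterates with the two Serre relations on the $(i,i+1)$ pair. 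The main obstacle is the last word: the straightening is genuinely multi-step and one must be careful that no denominator like the $\tfrac{1}{[2]}\qbi$ of Example~\ref{ex:G2} ever materializes. The key point making this safe is that in type $A$ all Serre relations have $a_{i,i+1} = -1$, so the only structure constants introduced are $[2]$, $-1$, and $1$, all in $\Zq$; closing the induction on the number of "inversions" in the word then yields $\Zq$-coefficients throughout.
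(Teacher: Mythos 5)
Your overall strategy is the same as the paper's: straighten each word using the two Serre-type relations and the commutativity $B_iB_{i+2}=B_{i+2}B_i$, and your treatment of the first two words $v_iv_iv_{i+1}$ and $v_iv_{i+1}v_{i+1}$ matches the paper's and is correct. But for the two harder words there is a genuine gap. You never actually carry out the straightening of $v_iv_{i+1}v_{i+2}v_{i+1}$ or $v_iv_{i+1}v_iv_{i+1}v_{i+2}$, and the reason you offer for why no bad denominators can appear --- ``the only structure constants introduced are $[2]$, $-1$, and $1$'' --- is false for the process you describe. Both of these words contain a subword of the form $B_jB_kB_j$ sitting in the \emph{middle} of a Serre relation (e.g.\ $B_{i+1}B_{i+2}B_{i+1}$ inside the third word, $B_{i+1}B_iB_{i+1}$ inside the fourth), and eliminating such a middle term forces you to divide by $[2]$. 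Indeed the paper's own computation begins $B_{v_iv_{i+1}v_{i+2}v_{i+1}} = \frac{1}{[2]}\bigl(B_{v_iv_{i+1}v_{i+1}v_{i+2}} + B_{v_{i+2}v_iv_{i+1}v_{i+1}} - B_{v_{i+2}v_i}\bigr)$ and similarly for the fifth-length word, so factors of $\frac{1}{[2]}\notin\Zq$ genuinely arise at intermediate stages. The entire content of the lemma is that after further rewriting these $\frac{1}{[2]}$'s cancel, leaving the final answers such as $B_{v_iv_{i+1}v_{i+2}v_{i+1}} = -B_{v_{i+1}v_{i+2}v_{i+1}v_i} + B_{v_{i+1}v_iv_{i+1}v_{i+2}} + B_{v_{i+2}v_{i+1}v_iv_{i+1}}$ with all coefficients in $\Zq$; this cancellation is exactly the phenomenon that \emph{fails} in type $G_2$ (Example~\ref{ex:G2}), so it cannot be dismissed by a general remark about the shape of the relations. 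An ``induction on inversions'' with your claimed set of structure constants therefore does not close.

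To repair the argument you would either have to exhibit the explicit chain of rewritings for both words and verify the cancellation of the $\frac{1}{[2]}$ terms (which is what the paper does), or find a straightening route that provably never solves for the middle term of a Serre relation --- and you have not shown such a route exists. Your aside about the fourth word (``move $B_{i+2}$ past one $B_{i+1}$ using nothing (they don't commute), so instead \dots --- wait, the tail is $B_iB_{i+1}B_{i+2}$'') makes clear the computation was not actually performed. As written, the proposal establishes only half of the lemma.
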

\begin{proof}In order to simplify the notation, we use a vector $x\in T(V)$ to denote the corresponding element $B_x$ in $\mathbf{U}^{\imath}$. Using the defining relations of $\mathbf{U}^\imath$, we have
    \begin{align*}
        v_iv_iv_{i+1} &= -v_{i+1}v_iv_i + [2]v_iv_{i+1}v_i + v_{i+1},\\
        v_iv_{i+1}v_{i+1} &= -v_{i+1}v_{i+1}v_i + [2]v_{i+1}v_iv_{i+1} + v_i,\\
        v_iv_{i+1}v_{i+2}v_{i+1} &= \frac{1}{[2]} (v_iv_{i+1}v_{i+1}v_{i+2} + v_{i+2}v_iv_{i+1}v_{i+1} - v_{i+2}v_i),\\
        &= \frac{1}{[2]} (-v_{i+1}v_{i+1}v_iv_{i+2} + [2]v_{i+1}v_iv_{i+1}v_{i+2} + v_{i+2}v_i) \\
        &\ \ \  + \frac{1}{[2]}(-v_{i+2}v_{i+1}v_{i+1}v_i + [2]v_{i+2}v_{i+1}v_iv_{i+1} + v_{i+2}v_i) - \frac{1}{[2]}v_{i+2}v_i,\\
        &=-\frac{1}{[2]} v_{i+1}v_{i+1}v_{i+2}v_i + v_{i+1}v_iv_{i+1}v_{i+2} \\
        &\ \ \  - \frac{1}{[2]}v_{i+2}v_{i+1}v_{i+1}v_i + v_{i+2}v_{i+1}v_iv_{i+1} + \frac{1}{[2]}v_{i+2}v_i,\\
        &= \frac{1}{[2]} (v_{i+2}v_{i+1}v_{i+1}v_i - [2]v_{i+1}v_{i+2}v_{i+1}v_i - v_{i+2}v_i) + v_{i+1}v_iv_{i+1}v_{i+2} \\
        &\ \ \  - \frac{1}{[2]}v_{i+2}v_{i+1}v_{i+1}v_i + v_{i+2}v_{i+1}v_iv_{i+1} + \frac{1}{[2]}v_{i+2}v_i,\\
        &= - v_{i+1}v_{i+2}v_{i+1}v_i + v_{i+1}v_iv_{i+1}v_{i+2} + v_{i+2}v_{i+1}v_iv_{i+1},
\end{align*}
\begin{align*}
        v_iv_{i+1}v_iv_{i+1}v_{i+2} &= \frac{1}{[2]}(v_iv_iv_{i+1}v_{i+1}v_{i+2} + v_iv_{i+1}v_{i+1}v_{i+2}v_i - v_{i+2}v_iv_i)\\
        &= \frac{1}{[2]}(-v_iv_iv_{i+2}v_{i+1}v_{i+1} + [2]v_iv_iv_{i+1}v_{i+2}v_{i+1} + v_{i+2}v_iv_i) \\
        &\ \ \ + \frac{1}{[2]}(-v_{i+1}v_{i+1}v_{i+2}v_iv_i + [2]v_{i+1}v_iv_{i+1}v_{i+2}v_i + v_{i+2}v_iv_i)\\
        &\ \ \  - \frac{1}{[2]}v_{i+2}v_iv_i\\
        &= -\frac{1}{[2]}v_{i+2}v_iv_iv_{i+1}v_{i+1} + v_iv_iv_{i+1}v_{i+2}v_{i+1} \\
        &\ \ \ - \frac{1}{[2]}v_{i+1}v_{i+1}v_{i+2}v_iv_i + v_{i+1}v_iv_{i+1}v_{i+2}v_i + \frac{1}{[2]}v_{i+2}v_iv_i\\
        &= -\frac{1}{[2]}v_{i+2}v_iv_iv_{i+1}v_{i+1} + v_iv_iv_{i+1}v_{i+2}v_{i+1} \\
        &\ \ \ +\frac{1}{[2]}v_{i+2}v_{i+1}v_{i+1}v_iv_i -v_{i+1}v_{i+2}v_{i+1}v_iv_i + v_{i+1}v_iv_{i+1}v_{i+2}v_i\\
        &= -\frac{1}{[2]}v_{i+2}v_iv_iv_{i+1}v_{i+1} \\
        &\ \ \ + v_{i+1}v_{i+2}v_{i+1} v_iv_i - [2] v_{i+1}v_{i+2}v_iv_{i+1}v_i + [2]v_iv_{i+1}v_{i+2}v_iv_{i+1}\\
        &\ \ \ +\frac{1}{[2]}v_{i+2}v_{i+1}v_{i+1}v_iv_i -v_{i+1}v_{i+2}v_{i+1}v_iv_i + v_{i+1}v_iv_{i+1}v_{i+2}v_i\\
        &= -\frac{1}{[2]}v_{i+2}v_{i+1}v_{i+1}v_iv_i + v_{i+2}v_{i+1}v_iv_{i+1}v_i - v_{i+2}v_iv_{i+1}v_iv_{i+1}\\
        &\ \ \ + v_{i+1}v_{i+2}v_{i+1} v_iv_i - [2] v_{i+1}v_{i+2}v_iv_{i+1}v_i + [2]v_iv_{i+1}v_{i+2}v_iv_{i+1}\\
        &\ \ \ + \frac{1}{[2]}v_{i+2}v_{i+1}v_{i+1}v_iv_i -v_{i+1}v_{i+2}v_{i+1}v_iv_i + v_{i+1}v_iv_{i+1}v_{i+2}v_i\\
        &= v_{i+2}v_{i+1}v_iv_{i+1}v_i - v_{i+2}v_iv_{i+1}v_iv_{i+1}\\
        &\ \ \  - [2] v_{i+1}v_{i+2}v_iv_{i+1}v_i + [2]v_iv_{i+1}v_{i+2}v_iv_{i+1} + v_{i+1}v_iv_{i+1}v_{i+2}v_i.
    \end{align*}
\end{proof}

For any $1\leq i<j\leq n$, we set $v[i, j] = v_iv_{i+1}\cdots v_{j-1}$. We consider the set\begin{align*}
Y& =\{(l_1, l_2)| l_1, l_2 \in \mathcal{GL}(\mathbf{U}^\imath),\  l_1<l_2\}\\
    &=\{(v[i, j], v[r, s])|i < r\} \cup \{(v[r, j], v[r, s])|  j< s\},
\end{align*}which contains $X$.

\begin{prop}For any $(v[i, j], v[r, s])\in Y$, $v[i, j]v[r, s]$ is integral with respect to $\mathbf{B}_\mathcal{G}$.
\end{prop}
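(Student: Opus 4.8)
The plan is to reduce the general case $(v[i,j],v[r,s])\in Y$ to the short words already handled in Lemma~\ref{Good An}, via induction on the total length $\ell(v[i,j])+\ell(v[r,s])$ using the well-order $(\mathcal{W},\prec)$ and Proposition~\ref{Basic facts for LR order}. By the description of $Y$, there are two families of pairs: those with $i<r$ and those with $j<s$ (sharing the common left letter $v_r$). In each family, the product $v[i,j]v[r,s]$ is a word in which some ``local obstruction'' of one of the four types listed in Lemma~\ref{Good An} appears as an internal factor. Concretely, if the word $v[i,j]v[r,s]$ already lies in $\mathcal{G}(\mathbf{U}^\imath)$ there is nothing to prove; otherwise, by Lemma~\ref{Non-good factor lemma} applied to its (Lyndon, when it is Lyndon) or by a direct inspection of which good Lyndon words can appear, the word must contain one of $v_kv_kv_{k+1}$, $v_kv_{k+1}v_{k+1}$, $v_kv_{k+1}v_{k+2}v_{k+1}$, or $v_kv_{k+1}v_kv_{k+1}v_{k+2}$ as a factor $u$. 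Then $v[i,j]v[r,s]=xuy$ for some $x,y\in\mathcal{W}^\ast$, and $B_u$ rewrites with $\Zq$-coefficients into $\sum_{u'\prec u}c_{u',u}B_{u'}$ by Lemma~\ref{Good An}; multiplying on both sides by $B_x,B_y$ and invoking Proposition~\ref{Basic facts for LR order}(1) gives $B_{v[i,j]v[r,s]}=\sum c_{u',u}B_{xu'y}$ with each $xu'y\prec v[i,j]v[r,s]$.

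The remaining work is to show each shorter word $xu'y$ is itself integral with respect to $\mathbf{B}_\mathcal{G}$, so that the conclusion follows by the inductive hypothesis. Here I would not want $xu'y$ to be an arbitrary word, so it is cleaner to phrase the induction as: every word of the form $v[i_1,j_1]v[i_2,j_2]\cdots v[i_m,j_m]$ (an arbitrary product of good Lyndon words, not necessarily in decreasing order) is integral, and run the well-founded induction on $(\mathcal{W},\prec)$ restricted to such products. When $u'$ replaces $u$ inside such a product, the resulting word is again a concatenation of (possibly scrambled) intervals, but after possibly splitting an interval at the spot where a letter was deleted or a pair was swapped, it is still a product of good Lyndon words of the required shape; this bookkeeping is the routine but slightly fiddly part. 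An alternative, and probably the intended route, is to observe that $Y\supseteq X$ and invoke Proposition~\ref{Integral expression}: it suffices to prove integrality for pairs in $X\subseteq Y$, and for those the two families reduce—after peeling off common prefixes/suffixes of intervals using $B_iB_j=B_jB_i$ for $|i-j|\ge 2$—to exactly the four base cases of Lemma~\ref{Good An}, up to a shift of indices.

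More precisely, for a pair $(v[i,j],v[r,s])$ with $i<r$: if $r\ge j$, then the letters of $v[i,j]$ all commute with the ``new'' letters beyond index $r$ except near the junction, and a short computation using only $B_kB_l=B_lB_k$ ($|k-l|\ge 2$) together with the two Serre-type relations puts the product into good form with $\Zq$ coefficients; if $r<j$ the intervals overlap, and one isolates the overlapping block, which is a word on consecutive indices of length $\le 5$ of one of the four types in Lemma~\ref{Good An} (after trimming commuting tails), reducing to the base case. For $(v[r,j],v[r,s])$ with $j<s$: the two intervals begin with the same letter $v_r$, so the product starts $v_rv_{r+1}\cdots v_{j-1}v_rv_{r+1}\cdots$, and again after commuting away the parts with index $\ge r+2$ that do not interact, one is left with a factor of type $v_rv_{r+1}v_rv_{r+1}v_{r+2}$ or $v_rv_{r+1}v_{r+2}v_{r+1}$ or $v_rv_rv_{r+1}$ or $v_rv_{r+1}v_{r+1}$, handled by Lemma~\ref{Good An}.

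The main obstacle I anticipate is the combinatorial control of the overlap cases: verifying that after trimming commuting letters the ``interaction core'' of $v[i,j]v[r,s]$ really is one of the four explicit words of Lemma~\ref{Good An} (and not a longer word requiring a new computation), and that the trimming commutations themselves do not spoil integrality. This is where one must be careful that the commutation moves $B_kB_l=B_lB_k$ with $|k-l|\ge 2$ preserve membership in the span of $\mathbf{B}_\mathcal{G}$ over $\Zq$ and strictly decrease the word in $\prec$ only when combined with a genuine reduction step—so the induction is set up on $\prec$ and each reduction lands on strictly $\prec$-smaller products of intervals, to which the inductive hypothesis applies. Once that reduction is in place, the rest is a finite check plus Proposition~\ref{Integral expression} and the already-established triangularity results.
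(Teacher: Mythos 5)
Your overall strategy — well-founded induction on $\prec$, reduction to the short words of Lemma \ref{Good An}, and Proposition \ref{Basic facts for LR order} to see that rewritten terms are strictly $\prec$-smaller — is the same as the paper's. But there is a genuine gap at exactly the point you flag as ``the main obstacle'': the claim that, after trimming commuting letters, the interaction core of $v[i,j]v[r,s]$ is always one of the four words of Lemma \ref{Good An} is false for the overlapping-prefix family $(v[r,j],v[r,s])$ with $j\ge r+3$. There the product commutes into $v_rv_{r+1}v_r\,v[r+2,j]\,v[r+1,s]$, and the prefix $v_rv_{r+1}v_r$ is itself already a good word, so no local rewriting of a bad factor applies; the obstruction is not localized. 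The paper instead substitutes $B_rB_{r+1}B_r=\tfrac{1}{[2]}\bigl(B_r^2B_{r+1}+B_{r+1}B_r^2-B_{r+1}\bigr)$, which introduces denominators $\tfrac{1}{[2]}\notin\Zq$, and then must prove a separate structural claim — that in the good-word expansion of $v[i,j]v[i,j+1]$ every term has the same letter multiplicities as the original word — in order to show that the $\tfrac{1}{[2]}$-terms cancel and the final expansion is integral. Nothing in your proposal addresses these denominators or their cancellation, and without that the induction does not close. Example \ref{ex:G2} shows that such denominators can genuinely survive in other types, so their cancellation in type $A$ is a real point of the proof, not bookkeeping.

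A secondary issue: Lemma \ref{Non-good factor lemma} produces a factor $l_1l_2$ with $(l_1,l_2)\in X$, i.e.\ a product of two good Lyndon words, not one of the four explicit words of Lemma \ref{Good An}; it therefore cannot be used to localize the obstruction to those four words. The reduction from a general pair in $X$ (or $Y$) to the base cases is precisely what has to be proven, and in the overlapping case it requires the additional cancellation argument above. Your treatment of the non-overlapping case $i<r$, $j\ge r+2$ (locating the factor $v_{r-1}v_rv_{r+1}v_r$) does match the paper and is fine.
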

\begin{proof}Set $$\mathcal{Y}_k=\{l_1l_2|(l_1, l_2)\in Y\text{ and }\ell(l_1l_2)=k\}$$ for each $k\geq 2$, and let $\mathcal{Y}=\bigcup_{k\geq 2}\mathcal{Y}_k$. We use well-founded induction on $(\mathcal{Y},\prec)$.

We first observe that $v[i,j]v[r,s]$ is integral with respect to $\mathbf{B}_\mathcal{G}$ if $j\leq r$. Indeed, if $j=r$, then $v[i,j]v[j,s]=v[i,s]\in \mathcal{G}(\mathbf{U}^\imath)$. If $j<r$, then $B_{v[i,j]}B_{v[r,s]}=B_{v[r,s]}B_{v[i,j]}$ and $v[r,s]v[i,j]\in \mathcal{G}(\mathbf{U}^\imath)$.

We verify that the result holds for $\mathcal{Y}_2$ and $\mathcal{Y}_3$. The case of $\mathcal{Y}_2$ is a direct consequence of the above observation. Note that $$\mathcal{Y}_3=\{v_iv_rv_{r+1}, v_iv_{i+1}v_r, v_iv_iv_{i+1}|i<r\}.$$ By the above observation and Lemma \ref{Good An}, every element in $\mathcal{Y}_3$ is integral with respect to $\mathbf{B}_\mathcal{G}$.

Suppose that $v[i, j]v[r, s]\in \mathcal{Y}_m$ with $m> 3$ and the result holds for all elements $\prec v[i, j]v[r, s]$. We also assume that $j > r$. In the following discussion, we adopt the convention used before which abbreviates $B_x$ to $x$. If $i < r$ and $j \ge r+2$, then
    \begin{align*}
        v[i, j]v[r, s] &=v[i, r-1] v[r-1, r+2] v[r+2, j]v_r v[r+1, s] \\
        &= v[i, r-1] v[r-1, r+2] v_r v[r+2, j] v[r+1, s].
    \end{align*}
    By the previous lemma,
    \begin{align*}
        v[r-1, r+2]v_r = v_{r-1}v_rv_{r+1}v_r=  - v_{r}v_{r+1}v_{r}v_{r-1} + v_{r}v_{r-1}v_{r}v_{r+1} + v_{r+1}v_{r}v_{r-1}v_{r}.
    \end{align*}
    Then
    \begin{align*}
        v[i, j]v[r, s] &=-v[i, r-1]v_{r}v_{r+1}v_{r}v_{r-1}v[r+2, j]v[r+1, s]\\
        &\ \ \  + v[i, r-1]v_{r}v_{r-1}v_{r}v_{r+1}v[r+2, j]v[r+1, s] \\
        &\ \ \  + v[i, r-1]v_{r+1}v_{r}v_{r-1}v_{r}v[r+2, j]v[r+1, s].
    \end{align*}
   All terms on the right hand side are $\prec v[i, j]v[r, s]$. By the inductive hypothesis, we are done.

    If $i < r$ and $j= r+1$, then $$v[i, j]v[r, s] = v[i, r-1]v[r-1, r+1]v_rv[r+1, s].$$ Note that $v[r-1, r+1]v_r = v_{r-1}v_rv_r$ and work like before.

 The next step is to treat the elements $v[r, j]v[r, s]$ with $j< s$.
    If $r<j\leq  r+2$, we note that $$v[r, r+1]v[r, s] = v_rv_rv_{r+1}v[r+2, s]$$ and
$$v[r, r+2]v[r, s] = v_rv_{r+1}v_rv_{r+1}v_{r+2}v[r+3, s].$$
    These two cases can be dealt with as before by using Lemma \ref{Good An}.

    If $j \ge r+3$,
    \begin{align*}
        v[r, j]v[r, s] &= v_r v_{r+1} v[r+2, j] v_r v[r+1, s]\\
        &= v_r v_{r+1} v_r v[r+2, j] v[r+1, s]\\
        &= \frac{1}{[2]}(v_r v_r v_{r+1} + v_{r+1} v_r v_r - v_{r+1}) v[r+2, j] v[r+1, s]\\
        &= \frac{1}{[2]} v_r v_r v[r+1, j] v[r+1, s]\\
        &+ \frac{1}{[2]} v_{r+1} v[r+2, j] v_r v_r v_{r+1} v[r+2, s]
        - \frac{1}{[2]} v_{r+1} v[r+2, j] v[r+1, s]\\
        &= \frac{1}{[2]} v_r v_r v[r+1, j] v[r+1, s]\\
        &- \frac{1}{[2]} v[r+1, j] v[r+1, s] v_r v_r
        + v[r+1, j] v[r, s] v_r\\
        &= \frac{1}{[2]} v_r v_r v[r+1, j] v[r+1, j+1]v[j+1, s]\\
        &- \frac{1}{[2]} v[r+1, j] v[r+1, j+1]v[j+1, s] v_r v_r
        + v[r+1, j] v[r, s] v_r.
    \end{align*}
    By the inductive hypothesis, $v[r+1, j] v[r+1, j+1]$ can be written as combinations of good words with $\Zq$ coefficients.

    We claim that in the good word representation of $v[i, j] v[i, j+1]$ ($i \ge r+1$),
    the multiplicity of a letter $v_k$ appearing in each term of the representation is the same as that of $v_k$ appearing in $v[i, j] v[i, j+1]$.
    We can see it by induction on $j-i$. Small cases $j = i+1, i+2$ have been checked before.
    For general case, we note that a good word in type A which is composed of letters of $v[i, j] v[i, j+1]$ must be of the form:
    \[
        v[j, j+1]^{r} \cdots v[i, i+2]^{s} v[i, i+1]^{t},\ \text{ for } s, t, \cdots, r \ge 0.
    \]
   On the other hand,
    \begin{align*}
        v_{i-1}v_{i-1}v_iv_i &= -v_iv_{i-1}v_{i-1}v_i + [2] v_{i-1}v_iv_{i-1}v_i + v_iv_i\\
        &= v_iv_iv_{i-1}v_{i-1} - [2] v_iv_{i-1}v_iv_{i-1} + [2] v_{i-1}v_iv_{i-1}v_i,\\
        v_{i-1}v_{i-1}v_iv_{i+1}v_i &= - v_iv_{i-1}v_{i-1}v_{i+1}v_i +  [2]v_{i-1}v_iv_{i-1}v_{i+1}v_i + v_iv_{i+1}v_i\\
        &= - v_iv_{i+1}v_{i-1}v_{i-1}v_i +  [2]v_{i-1}v_iv_{i+1}v_{i-1}v_i + v_iv_{i+1}v_i\\
        &= v_iv_{i+1}v_iv_{i-1}v_{i-1} - [2] v_iv_{i+1}v_{i-1}v_iv_{i-1} +  [2]v_{i-1}v_iv_{i+1}v_{i-1}v_i.
    \end{align*}
    Thus,
    \begin{align*}
        &v_{i-1}v_{i-1} v[j, j+1]^{r} \cdots v[i, i+2]^{s} v[i, i+1]^{t}\\
        &= \cdots v_{i-1}v_{i-1} v_i \cdots v_i \cdots\\
        &= \cdots  v_i \cdots v_i \cdots v_{i-1}v_{i-1} - [2]\cdots  v_i \cdots v_{i-1}v_i \cdots v_{i-1} + [2]\cdots  v_{i-1}v_i \cdots v_{i-1}v_i \cdots.
    \end{align*}
Hence
    \begin{align*}
       \lefteqn{ v[r, j]v[r, s]}\\
        &= \frac{1}{[2]} v_r v_r v[r+1, j] v[r+1, j+1]v[j+1, s]\\
        &\ \ \ - \frac{1}{[2]} v[r+1, j] v[r+1, j+1]v[j+1, s] v_r v_r
        + v[r+1, j] v[r, s] v_r \\
        &= \frac{1}{[2]} \Big(v[r+1, j] v[r+1, j+1]v_r v_r+ [2](\text{ integral terms }\prec v[r, j]v[r, j+1])\Big) v[j+1, s]\\
        &\ \ \ - \frac{1}{[2]} v[r+1, j] v[r+1, j+1]v[j+1, s] v_r v_r+ v[r+1, j] v[r, s] v_r \\
        &= v[r+1, j] v[r, s] v_r\\
        &\ \ \  + \text{integral terms }\prec v[r, j]v[r, j+1] v[j+1, s].
    \end{align*}
    It satisfies the inductive hypothesis.
\end{proof}

Thanks to Proposition~\ref{Integral expression} and Theorem~\ref{Canonical bases}, we get the following theorem.
\begin{thm}\label{canofA}
The split $\imath$quantum group $\mathbf{U}^\imath(\mathfrak{sl}_n)$ satisfies the condition $(IC)$, and hence affords a canonical basis. \end{thm}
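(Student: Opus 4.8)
The plan is to apply Proposition~\ref{Integral expression}, which reduces the verification of condition (IC) to showing that $l_1l_2$ is integral with respect to $\mathbf{B}_{\mathcal G}$ for every pair $(l_1,l_2)\in X$. Since in type $A_n$ we have $X\subseteq Y$ and $Y$ consists exactly of the pairs $(v[i,j],v[r,s])$ with $i<r$ together with the pairs $(v[r,j],v[r,s])$ with $j<s$, it suffices to prove that $v[i,j]v[r,s]$ is integral for each such pair. This is precisely the content of the Proposition immediately preceding the theorem, so the present theorem follows by combining it with Proposition~\ref{Integral expression} and then invoking Theorem~\ref{Canonical bases} to obtain the canonical basis.

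\textbf{Steps.} First I would recall that $\mathcal{GL}(\mathbf{U}^\imath)=\{v_iv_{i+1}\cdots v_j\mid 1\le i\le j\le n\}$ for type $A_n$, so that every good Lyndon word has the interval form $v[i,j]$, and that $X$, defined via non-goodness of products, is contained in the purely combinatorial set $Y$. Second, I would state the key lemma (Lemma~\ref{Good An}) that the basic short words $v_iv_iv_{i+1}$, $v_iv_{i+1}v_{i+1}$, $v_iv_{i+1}v_{i+2}v_{i+1}$, $v_iv_{i+1}v_iv_{i+1}v_{i+2}$ are integral; these follow from the defining relations of $\mathbf{U}^\imath(\mathfrak{sl}_{n+1})$ with $\xi_i=q_i^{-1}$, for which the structure constants lie in $\Zq$ (indeed in $\mathbb{Z}[q,q^{-1}]$). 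Third, I would run the well-founded induction on $(\mathcal{Y},\prec)$ carried out in the Proposition: the base cases $\mathcal{Y}_2,\mathcal{Y}_3$ are handled by the observation that $v[i,j]v[r,s]$ with $j\le r$ is already good (either equals $v[i,s]$ or commutes into a good word), together with Lemma~\ref{Good An}; and the inductive step rewrites $v[i,j]v[r,s]$ by inserting one of the basic integral relabelings of Lemma~\ref{Good An} at an interior position, using Proposition~\ref{Basic facts for LR order} to check that every resulting word is strictly $\prec$-smaller, so that the inductive hypothesis applies. Finally, I would apply Proposition~\ref{Integral expression} to conclude condition (IC), and then Theorem~\ref{Canonical bases} to produce the canonical basis.

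\textbf{Main obstacle.} The genuinely delicate point is the inductive step for the pairs $(v[r,j],v[r,s])$ with $j<s$ and $j\ge r+3$, where after commuting $v_r$ leftwards one must straighten $v[r+1,j]\,v[r+1,j+1]$ using the inductive hypothesis and then \emph{control how the letters $v_rv_r$ interact with the resulting good-word expansion}. This requires the multiplicity claim: in the good-word representation of $v[i,j]v[i,j+1]$ (for $i\ge r+1$) every term uses each letter $v_k$ with the same multiplicity as $v[i,j]v[i,j+1]$ does, so that good words here have the shape $v[j,j+1]^r\cdots v[i,i+2]^s v[i,i+1]^t$, and one then checks — via the auxiliary identities for $v_{i-1}v_{i-1}v_iv_i$ and $v_{i-1}v_{i-1}v_iv_{i+1}v_i$ — that prepending $v_rv_r$ and re-straightening only produces further integral terms that are $\prec v[r,j]v[r,j+1]v[j+1,s]$. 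Verifying that this bookkeeping closes up, keeping all coefficients in $\Zq$, is the heart of the argument; everything else is routine rewriting using the relations and Proposition~\ref{Basic facts for LR order}.
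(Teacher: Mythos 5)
Your proposal is correct and follows essentially the same route as the paper: reduce via Proposition~\ref{Integral expression} to the pairs in $X\subseteq Y$, establish the short-word integrality identities of Lemma~\ref{Good An}, and run the well-founded induction on $(\mathcal{Y},\prec)$ whose delicate case is $(v[r,j],v[r,s])$ with $j\ge r+3$, handled by the letter-multiplicity claim for the good-word expansion of $v[i,j]v[i,j+1]$. You have correctly identified the same key lemma, the same inductive structure, and the same critical step as the paper's own argument.
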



\end{document}